\newcommand{\bitem}{\begin{itemize}}
\newcommand{\eitem}{\end{itemize}}
\newcommand{\beq}{\begin{equation}}
\newcommand{\eeq}{\end{equation}}
\newcommand{\cC}{{\cal C}}
\newcommand{\cS}{{\cal S}}
\newcommand{\cR}{{\cal R}}
\newcommand{\cH}{{\cal H}}
\newcommand{\cL}{{\cal L}}
\newcommand{\cD}{{\cal D}}
\newcommand{\bZ}{{\mathbb Z}}
\newcommand{\bR}{{\mathbb R}}
\newcommand{\bC}{{\mathbb C}}
\newcommand{\bN}{{\mathbb N}}
\newcommand{\ox}{\omega_x}
\newcommand{\oy}{\omega_y}
\newcommand{\CC}{\mathbb{C}}
\newcommand{\Sp}{\mbox{supp }}
\newcommand{\Id}{{\rm{Id }}}
\newcommand{\ip}[2]{\left\langle#1,#2\right\rangle}
\title{ShearLab: A Rational Design of a Digital Parabolic Scaling Algorithm}
\author{Gitta Kutyniok\thanks{Institute of Mathematics, University of Osnabr\"uck, 49069 Osnabr\"uck, Germany
({\tt kutyniok@uni-osnabrueck.de}).}
\and
Morteza Shahram\thanks{Department of Statistics, Stanford University, Stanford, CA 94305, USA
({\tt mshahram@stanford.edu}).}
\and
Xiaosheng Zhuang\thanks{Institute of Mathematics, University of Osnabr\"uck, 49069 Osnabr\"uck, Germany
({\tt xzhuang@uni-osnabrueck.de}).
\newline
\hspace*{0.4cm}
G.K. and M.S. would like to thank David Donoho for many inspiring discussions about
this work. They are also grateful to the Isaac Newton Institute of Mathematical Sciences in
Cambridge, UK for an inspiring research environment which led to the completion of a significant
part of this work. G.K. also thanks the Statistics Department at Stanford and the Mathematics
Department at Yale for hospitality and support during her visits. This work was partially supported
by Deutsche Forschungsgemeinschaft (DFG) Heisenberg fellowship KU 1446/8, DFG Grant SPP-1324
KU 1446/13, and DFG Grant KU 1446/14.}
}
\begin{document}

\maketitle

\begin{abstract}
Multivariate problems are typically governed by anisotropic
features such as edges in images. A common bracket of most of the various directional representation systems
which have been proposed to deliver sparse approximations of such features is the utilization of parabolic
 scaling. One prominent example is the shearlet system.
Our objective in this paper is three-fold: We firstly develop a digital shearlet theory which is rationally designed
in the sense that it is the digitization of the existing shearlet theory for continuous data. This implicates that
shearlet theory provides a unified treatment of both the continuum and digital realm. Secondly, we analyze
the utilization of pseudo-polar grids and the pseudo-polar Fourier transform for
digital implementations of parabolic scaling algorithms. We derive an isometric pseudo-polar Fourier
transform by careful weighting of the pseudo-polar grid, allowing exploitation of its adjoint for the inverse
transform. This leads to a digital implementation of the shearlet transform; an accompanying Matlab toolbox
called \url{ShearLab} is provided.
And, thirdly, we introduce various quantitative measures for digital parabolic scaling algorithms in general, allowing one
to tune parameters and objectively improve the implementation as well as compare different directional transform
implementations. The usefulness of such measures is exemplarily demonstrated for the digital shearlet transform.
\end{abstract}

\begin{keywords}
Curvelets, digital shearlet system, directional representation system, fast digital shearlet transform, parabolic scaling,
performance measures, software package, tight frames
\end{keywords}

\begin{AMS}
Primary 42C40; Secondary 42C15, 65K99, 65T60, 65T99, 94A08
\end{AMS}

\pagestyle{myheadings}
\thispagestyle{plain}
\markboth{G. KUTYNIOK, M. SHAHRAM, AND X. ZHUANG}{SHEARLAB: A RATIONAL DESIGN OF A PARABOLIC SCALING ALGORITHM}

\section{Introduction}
In recent years, applied harmonic analysts have introduced several approaches for directional representations of image data,
each one with the intent of efficiently representing highly anisotropic image features. Examples include curvelets
\cite{CD05a,CD05b,CDDY06}, contourlets \cite{DV05}, and shearlets \cite{GKL06,KLL10}. These proposals are
inspired by elegant results in theoretical harmonic analysis, which study functions defined on the continuum plane (i.e.,
not digital images) and address problems of efficiently representing certain types of functions and operators. One set
of inspiring results concerns the possibility of highly compressed representations of `cartoon' images, i.e., functions
which are piecewise smooth with singularities along smooth curves. Another set of results concerns the possibility of highly
compressed representations of wave propagation operators. In `continuum theory', anisotropic directional transforms can
significantly outperform wavelets in important ways.

Accordingly, one hopes that a digital implementation of such ideas would also deliver performance benefits over wavelet
algorithms in real-world settings. Anticipated applications include \cite{HH08}, where missing sensors cause incomplete
measurements, and the problem of texture/geometry separation in image processing -- for example in astronomy when images
of galaxies require separated analyses of stars, filaments, and sheets \cite{SCD03,DK10}.

In many cases, however, there are no publicly available implementations of such ideas, or the available implementations are
only sketchily tested, or the available implementations are only vaguely related to the continuum transforms they are
reputed to represent.  Accordingly, we have not yet seen a serious exploration of the potential benefit of such transforms,
carefully comparing the expected benefits with those delivered by specific implementations.

In this paper we aim at providing both:
\bitem
\item[(1)] A rationally designed shearlet transform implementation.
\item[(2)] A comprehensive framework for quantifying performance of directional representations in general.
\eitem
%Accompanying this paper is the public release of a reproducibly-tested rationally-designed implementation at \url{www.ShearLab.org}.

For (1), we developed an implementation of the fast digital shearlet transform (FDST) based on a digital shearlet theory which is a very
natural digitization of the existing shearlet theory for continuous data. Other parabolic-scaling transforms, for example, curvelets
are inherently based on operations (rotation) which translate awkwardly into the digital realm. In
contrast, when we consider shearlets, rotations are replaced by shearing, which has a natural digital realization, thus
enabling a unified treatment for the continuum and digital realm similar to wavelets.

The framework in (2) has three benefits. First, it provides quantitative performance measures which can be used to tune the parameters
of our implementation, which is publicly available at \url{www.ShearLab.org}. This allows us to specify `recommended choices'
for the parameters of our implementation. Second, the
same `measure and tune' approach may be useful to other implementers of directional transforms. Third, we show a way to improve
the level of intellectual seriousness in applied mathematics which pretends to work in image processing. We believe that widespread
adoption of this measure and tune framework can be very valuable, since many supposedly scientific presentations are now little
more than vague, numbing  `advertising' or `marketing' pitches.  They could instead offer quantitative comparisons between
algorithms, and thereby be far more informative. In fact the combination of quantitative evaluation with reproducible research
\cite{DMSSU08} would be particularly effective at producing both intellectual seriousness and rapid progress.

\subsection{Desiderata}
\label{subsec:desiderata}

We start by proposing the following desiderata for the fast digital shearlet transform FDST and its implementation:

\renewcommand{\labelenumi}{{\rm [D\arabic{enumi}]}}

\begin{enumerate}
\item {\em Algebraic Exactness.} The transform should be based on a shearlet theory
for digital data on a pseudo-polar grid, than merely being `somewhat
close' to the shearlet theory for continuous data.
\item {\em Isometry of Pseudo-Polar Fourier Transform.} We intro\-duce over\-samp\-ling and
weights to ob\-tain an isometric pseudo-polar Fourier transform, which allows us to use
the adjoint as inverse transform.
\item {\em Tight Frame Property.} The shearlet coefficients computed by the transform
should be based on a tight frame decomposition, which ensures an
isometric relation between the input image and the sequence of
coefficients as well as allows us to use the adjoint as inverse
transform. This property follows by combining [D1] and [D2], and
allows the comparison with other transforms in contrast to those previous two tests.
\item {\em Time-Frequency-Localization.} The spatial portrait of the analyzing elements
should `look like' shearlets in the sense that they are sufficiently
smooth as well as time-localized. Localization and smoothness in frequency domain is
ensured by definition.
\item {\em True Shear Invariance.} Since the orientation-related operator of shearlets
is in fact the shear operator, we expect to see a shearing of the input image mirrored in
a simple shift of the transform coefficients.
\item {\em Speed.} The transform should admit an al\-go\-ri\-thm of or\-der $O(N^2 \log N)$
flops, where $N^2$ is the number of digital points of the input image.
\item {\em Geometric Exactness.} The transform should preserve geometric properties
parallel to those of the continuum theory, for example, edges should be mapped to
edges in shearlet domain.
\item {\em Robustness.} The transform should be resilient against impacts such as
(hard) thresholded and quantized coefficients.
\end{enumerate}

%***********************************************************************************

\subsection{Definition of the Shearlet Transform}
\label{subsec:defST}

The main idea for the construction of the shearlet transform with discrete parameters for functions in $L^2(\bR^2)$
is the choice of a two-parameter dilation group, where one parameter ensures the multiscale property, whereas the
second parameter provides a means to detect directions. The choice for a direction sensitive parameter is particularly
important, since the most canonical choice, the rotation, would prohibit a unified treatment of the continuum and
digital realm due to the fact that the integer grid is not invariant under rotation. Shearlets parameterize directions by slope
rather than angles. And the shear matrix does preserve the structure of the integer grid, which is key to enabling
an exact digitization of the continuum domain shearlets.

For each $a>0$ and $s \in \mathbb{R}$, let $A_a$ denote the {\em parabolic scaling matrix} and $S_s$ denote the
{\em shear matrix} of the form
\[
A_a =
\begin{pmatrix}
  a & 0\\ 0 & \sqrt{a}
\end{pmatrix}
\qquad\mbox{and}\qquad
S_s = \begin{pmatrix}
  1 & s\\ 0 & 1
\end{pmatrix},
\]
respectively. To provide an equal treatment of the $x$- and
$y$-axis, the frequency plane is split into the four cones
$\cC_{11}$ -- $\cC_{22}$ (see  Figure~\ref{fig:ShearletsCone}),
defined by
\[
\cC_\iota = \left\{ \begin{array}{rcl}
\{(\xi_1,\xi_2) \in \bR^2 : \xi_1 \ge 1,\;\;\; |\xi_1/\xi_2| \ge 1\} & : & \iota = 21,\\
\{(\xi_1,\xi_2) \in \bR^2 : \xi_2 \ge 1,\;\;\; |\xi_1/\xi_2| \le 1\} & : & \iota = 11,\\
\{(\xi_1,\xi_2) \in \bR^2 : \xi_1 \le -1,\, |\xi_1/\xi_2| \ge 1\} & : & \iota = 22,\\
\{(\xi_1,\xi_2) \in \bR^2 : \xi_2 \le -1,\, |\xi_1/\xi_2| \le 1\} & : & \iota = 12.
\end{array}
\right.
\]
Let now $\psi_1 \in L^2(\bR)$ be a wavelet with $\hat{\psi}_1 \in C^\infty(\mathbb{R})$
and supp $\hat{\psi}_1 \subseteq [-4,-\frac14] \cup [\frac14,4]$, and let
$\psi_2 \in L^2(\mathbb{R})$ be a `bump' function satisfying $\hat{\psi}_2 \in C^\infty(\mathbb{R})$ and
supp $\hat{\psi}_2 \subseteq [-1,1]$. We define $\psi \in L^2(\mathbb{R}^2)$ by
\beq \label{eq:psidef}
\hat{\psi}(\xi) = \hat{\psi}(\xi_1,\xi_2) = \hat{\psi}_1(\xi_1) \, \hat{\psi}_2(\tfrac{\xi_2}{\xi_1}).
\eeq
For cone $\cC_{21}$, at scale $j\in\bN_0:=\bN\cup\{0\}$, orientation $s = -2^{j}, \dots,$ $2^{j}$,
and spatial position $m \in \bZ^2$,  the associated {\em shearlets} are then defined by
their Fourier transforms
\begin{eqnarray} \nonumber
\hat{\sigma}_{\eta}(\xi)
& = & 2^{-j\frac{3}{2}} \hat{\psi}(S_s^T A_{4^{-j}} \xi) \chi_{\cC_{21}}(\xi) e^{2\pi i\ip{A_{4^{-j}} S_s m}{\xi}}\\ \label{eq:shearlets}
& = & 2^{-j\frac{3}{2}} \hat{\psi}_1({4^{-j}}{\xi_1}) \hat{\psi}_2(s + 2^{j}\tfrac{\xi_2}{\xi_1}) \chi_{\cC_{21}}(\xi) e^{2\pi i \ip{A_{4^{-j}} S_s m}{\xi}},
\end{eqnarray}
where $\eta = (j,s,m,\iota)$ index scale, orientation,  position, and cone.
%Also, $\langle x,y\rangle:=\bar{x} ^Ty$ is the innder product for two $n\times 1$ vectors $x,y\in\bC^n$ with $T$ being the
%operator of transpose \xz{(definition of inner product)}. For two functions $f,g\in L^2(\bR^2)$, $\ip{f}{g}:=\int_{\bR^2}
%f(x)\overline{g(x)}dx$.
The shearlets for $\cC_{11}$, $\cC_{21}$, and $\cC_{22}$ are defined
likewise by symmetry, as illustrated in Figure
\ref{fig:ShearletsCone}, and we denote the resulting {\em discrete
shearlet system} by \beq \label{eq:shearletsystem} \{\sigma_{\eta} :
\eta \in \bN_0 \times \{-2^{j}, \dots, 2^{j}\} \times \bZ^2 \times
\{11, 12, 21, 22\} \}. \eeq The definition shows that shearlets live
on anisotropic regions of width $2^{-2j}$ and length $2^{-j}$ at
various orientations.

It should be mentioned that discrete shearlets -- `discrete' referring to the set of parameters
and {\em not} to the domain -- can also be defined with respect to the dilation matrix $A_{2^{-j}}$. However, in this case the odd scales
have to be handled particularly carefully.  The attentive reader will have also observed that recently
introduced compactly supported shearlets \cite{KKL10,KLL10} do not require projecting the shearlets to the
respective cones; however, despite other advantageous properties, they do not form a tight frame for $L^2(\bR^2)$.
Finally, the generating window allows in fact more freedom than \eqref{eq:psidef}, but in this paper we
restrict ourselves to this (customary) choice.
\begin{figure}[ht]
\begin{center}
\scalebox{1.0}{\includegraphics[height=1.3in]{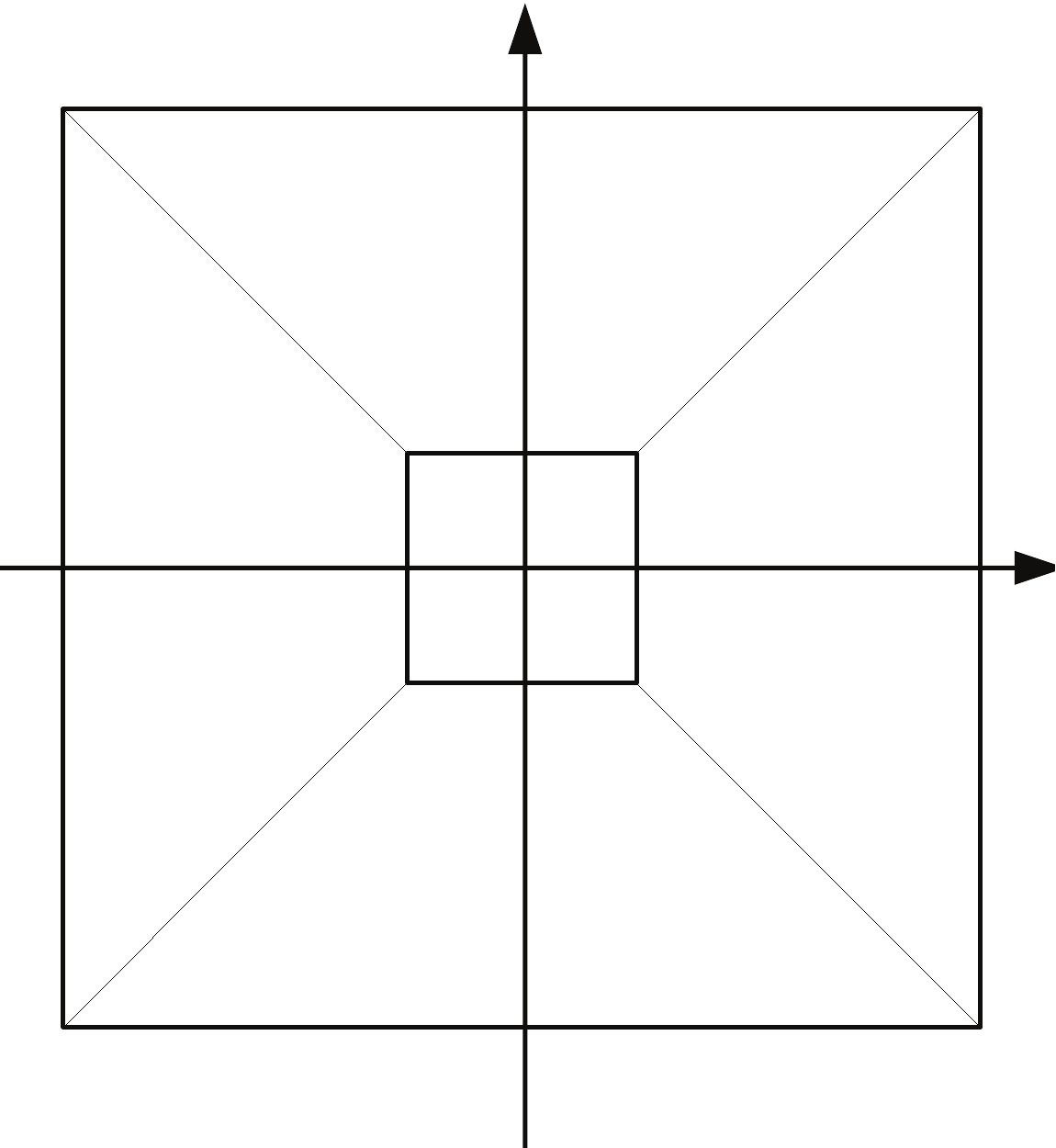}}
\put(-22,54){$\mathcal{C}_{21}$}
\put(-40,72){$\mathcal{C}_{11}$}
\put(-76,37){$\mathcal{C}_{22}$}
\put(-59,20){$\mathcal{C}_{12}$}
\hspace*{1.5cm}
\scalebox{1.0}{\includegraphics[height=1.35in]{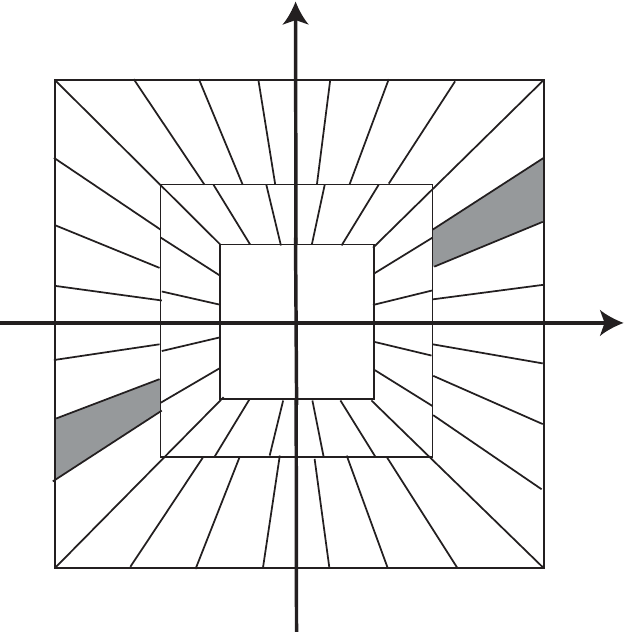}}
\end{center}
\caption{The cones $\cC_{11}$ -- $\cC_{22}$ and the tiling of the frequency domain induced by shearlets.}
\label{fig:ShearletsCone}
\end{figure}

Setting $\cC^\vee = \bigcup_{\iota = 11}^{22} \cC_\iota$,
we have the following theorem from \cite[Thm. 3]{GKL06} concerning the frame properties of the
discrete shearlet system. For the definition of a tight (sometimes called Parseval) frame, we refer to \cite{Chr03}.
\begin{theorem}[\cite{GKL06}]
The system \eqref{eq:shearletsystem} is a tight frame for $\{f \in L^2(\bR^2) : \Sp \hat{f} \subseteq \cC^\vee\}$.
\end{theorem}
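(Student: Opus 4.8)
The plan is to verify the tight frame condition directly in the Fourier domain by computing $\sum_{\eta} \abs{\langle f, \sigma_\eta\rangle}^2$ and showing it equals $\enorm{f}^2$ for every $f$ with $\Sp\hat{f}\subseteq\cC^\vee$. First I would fix a single cone, say $\cC_{21}$, and restrict attention to functions $f$ with $\Sp\hat{f}\subseteq\cC_{21}$. By Plancherel, $\langle f,\sigma_\eta\rangle = \langle \hat{f},\hat{\sigma}_\eta\rangle$, and using \eqref{eq:shearlets} the exponential factor $e^{-2\pi i\ip{A_{4^{-j}}S_s m}{\xi}}$ turns the inner product, for fixed $(j,s)$, into (a constant multiple of) a Fourier coefficient of the function $\xi\mapsto \hat{f}(\xi)\,\overline{2^{-j\frac32}\hat{\psi}_1(4^{-j}\xi_1)\hat{\psi}_2(s+2^j\tfrac{\xi_2}{\xi_1})}\chi_{\cC_{21}}(\xi)$ with respect to an appropriate lattice. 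Summing $\abs{\cdot}^2$ over $m\in\bZ^2$ and invoking Parseval for that lattice (the change of variables $\xi\mapsto A_{4^{-j}}S_s\xi$ is measure-preserving up to the Jacobian, which is exactly cancelled by the $2^{-j\frac32}$ normalization — this is the point of parabolic scaling) collapses the $m$-sum to an integral:
\[
\sum_{m\in\bZ^2}\abs{\langle \hat f,\hat\sigma_\eta\rangle}^2 = \int_{\cC_{21}} \abs{\hat f(\xi)}^2\,\abs{\hat\psi_1(4^{-j}\xi_1)}^2\,\abs{\hat\psi_2(s+2^j\tfrac{\xi_2}{\xi_1})}^2\,d\xi .
\]

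Next I would sum over the orientations $s=-2^j,\dots,2^j$ and the scales $j\in\bN_0$. The key is then a \emph{partition-of-unity} (Calder\'on-type) identity: one needs
\[
\sum_{j\ge 0}\sum_{s=-2^j}^{2^j}\abs{\hat\psi_1(4^{-j}\xi_1)}^2\,\abs{\hat\psi_2(s+2^j\tfrac{\xi_2}{\xi_1})}^2 = 1
\qquad\text{for a.e. }\xi\in\cC_{21}.
\]
This factors: the support condition $\Sp\hat\psi_1\subseteq[-4,-\tfrac14]\cup[\tfrac14,4]$ together with a suitable normalization of $\psi_1$ gives $\sum_{j\ge0}\abs{\hat\psi_1(4^{-j}\xi_1)}^2 = 1$ for $\xi_1\ge1$ (the usual dyadic — here $4$-adic — wavelet Calder\'on condition, with the low-frequency gap $[-\tfrac14,\tfrac14]$ harmless because $\xi_1\ge1$ on $\cC_{21}$), while the bump condition $\Sp\hat\psi_2\subseteq[-1,1]$ with $\sum_{s\in\bZ}\abs{\hat\psi_2(\cdot+s)}^2\equiv1$ handles the shear sum, noting that on $\cC_{21}$ one has $\abs{\xi_2/\xi_1}\le1$ so $2^j\tfrac{\xi_2}{\xi_1}\in[-2^j,2^j]$ and hence only the translates $s\in\{-2^j,\dots,2^j\}$ contribute. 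These are precisely the admissibility assumptions one imposes on $\psi_1,\psi_2$ (implicit in ``wavelet'' and ``bump function'' here, and spelled out in \cite{GKL06}); granting them, the double sum telescopes to $\int_{\cC_{21}}\abs{\hat f(\xi)}^2\,d\xi = \enorm{f}^2$.

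Finally I would assemble the four cones. For general $f$ with $\Sp\hat f\subseteq\cC^\vee$, write $\hat f = \sum_{\iota}\hat f\,\chi_{\cC_\iota}$; since the cones are essentially disjoint, $\enorm{f}^2 = \sum_\iota \int_{\cC_\iota}\abs{\hat f}^2$, and the shearlets attached to cone $\iota$ are (essentially) supported in $\cC_\iota$ by the factor $\chi_{\cC_\iota}$, so the cross terms vanish and the per-cone computation above applies verbatim to $\cC_{11},\cC_{22},\cC_{12}$ by the symmetry of the construction (swapping the roles of $\xi_1,\xi_2$ and/or reflecting). Summing the four contributions yields $\sum_\eta\abs{\langle f,\sigma_\eta\rangle}^2 = \enorm{f}^2$, which is the tight frame identity.

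The main obstacle is the careful bookkeeping at the cone boundaries $\abs{\xi_1/\xi_2}=1$: there the hard cutoff $\chi_{\cC_{21}}$ interacts with the smooth bump $\hat\psi_2$ evaluated near the edge of its support, and one must check that the orientation sum truncated at $s=\pm2^j$ still reproduces $1$ on the closed cone without double-counting the seams shared with the adjacent cones. The honest accounting of these boundary shearlets — and the verification that the truncation $s\in\{-2^j,\dots,2^j\}$ loses nothing on $\cC_{21}$ while not overcounting on the overlap with $\cC_{11}$ — is the delicate part; the rest is the standard wavelet Plancherel/Calder\'on machinery, which is why I would lean on \cite[Thm.~3]{GKL06} for that step rather than reproduce it in full.
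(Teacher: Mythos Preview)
The paper does not actually prove this theorem; it is quoted from \cite{GKL06} without argument. Your outline is precisely the standard proof given there: Plancherel to pass to the frequency side, the change of variables $\zeta=S_s^TA_{4^{-j}}\xi$ together with Parseval for the $\bZ^2$-lattice to collapse the $m$-sum into $\int_{\cC_{21}}|\hat f|^2|\hat\psi_1(4^{-j}\xi_1)|^2|\hat\psi_2(s+2^j\xi_2/\xi_1)|^2\,d\xi$, and then the factored Calder\'on condition on $\hat\psi_1$ and the shift-partition on $\hat\psi_2$ to reduce to $\|f\|_2^2$. The paper's own proof of the \emph{digital} analogue (Theorem~\ref{thm:DSHtight}) follows this template step for step --- Plancherel on $\cR_{j,s}$ in place of the lattice Parseval, then \eqref{eq:summabilityV2} for the shear sum and \eqref{eq:summabilityW} for the scale sum --- so your continuum sketch is exactly in line with how the authors themselves argue in the discrete setting.

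One small point worth tightening: for the lattice-Parseval step to give the clean identity you wrote (no aliasing terms), the support of $\hat\psi$ after the change of variables must sit inside a fundamental domain of $\bZ^2$. With $\Sp\hat\psi_1\subseteq[-4,-\tfrac14]\cup[\tfrac14,4]$ and $\Sp\hat\psi_2\subseteq[-1,1]$ the $\zeta$-support lands in $[-4,4]^2$, not $[-\tfrac12,\tfrac12]^2$, so a naive application of Parseval on $\bZ^2$ would produce cross terms. In \cite{GKL06} this is handled by the specific support geometry combined with the cone restriction (the overlaps are killed because the translates of the support under $\bZ^2$ don't meet the cone), and you are right to defer to that reference for the bookkeeping; but your phrase ``Parseval for that lattice'' glosses over exactly this point, which is of the same flavor as the seam-line issue you correctly flag at the end.
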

We remark that the low frequency part can be appropriately filled in to obtain a tight frame for $L^2(\bR^2)$.

The transform associated with this system is the {\em discrete shearlet transform}, which for a given function
$f \in L^2(\bR^2)$ is defined to be the map
\[
\bN_0 \times \{-2^{j}, \dots, 2^{j}\} \times \bZ^2 \times \{11, 12, 21, 22\} \ni \eta \mapsto
(\ip{f}{\sigma_\eta}) \in \bC.
\]
It is this transform, which we aim to exactly digitize.

%***********************************************************************************

\subsection{Ingredients of the Fast Digital Shearlet Transform (FDST)}
\label{subsec:DST}

The shearlet transform for continuum domain data (see Figure
\ref{fig:ShearletsCone}) implicitly induces a trapezoidal tiling of
frequency space which is evidently not cartesian. By introducing a
special set of coordinates on the continuum 2D frequency space, the
discrete shearlet transform can be represented as a cascade of five
operations: \bitem
\item Classical Fourier transformation.
\item Change of variables to pseudo-polar coordinates.
\item Weighting by a radial `density com\-pen\-sa\-tion' factor.
\item Decomposition into rectangular tiles.
\item Inverse Fourier transform of each tiles.
\eitem
%The weights referred to in the third step are simply the square root of the Jacobian of the change of variables mapping
%and amount to a form of {\it density compensation}.

Surprisingly, this process admits a natural translation into the
digital domain. The key observation is that the pseudo-polar
coordinates are naturally compatible with digital image processing
(compare Figure \ref{fig:PPgrid}) and perfectly suited for a
digitization of the discrete shearlet transform as a comparison with
the frequency tiling generated by continuum domain shearlets in
Figure \ref{fig:ShearletsCone} already visually evidences.
Fortunately, in \cite{ACDIS08} a fast pseudo-polar Fourier transform
(PPFT) is already developed. This transform evaluates the Fourier
transform of an image of size $N$, say, on a pseudo-polar grid
$\Omega$ of the form
%\[
$\Omega = \Omega^1 \cup \Omega^2$,
%\]
where
\begin{eqnarray*}
\Omega^1 & = &  \{(- k\cdot\tfrac{2\ell}{N},k) : -\tfrac{N}{2}\le \ell \le \tfrac{N}{2}, \, -N \le k \le N\},\\
\Omega^2 & = &  \{(k,-k\cdot\tfrac{2\ell}{N}) : -\tfrac{N}{2} \le \ell \le \tfrac{N}{2}, \, -N \le k \le N\}.
\end{eqnarray*}
Figure \ref{fig:PPgrid} shows an illustration of the case $N=4$.
\begin{figure}[ht]
\begin{center}
\includegraphics[height=1.0in]{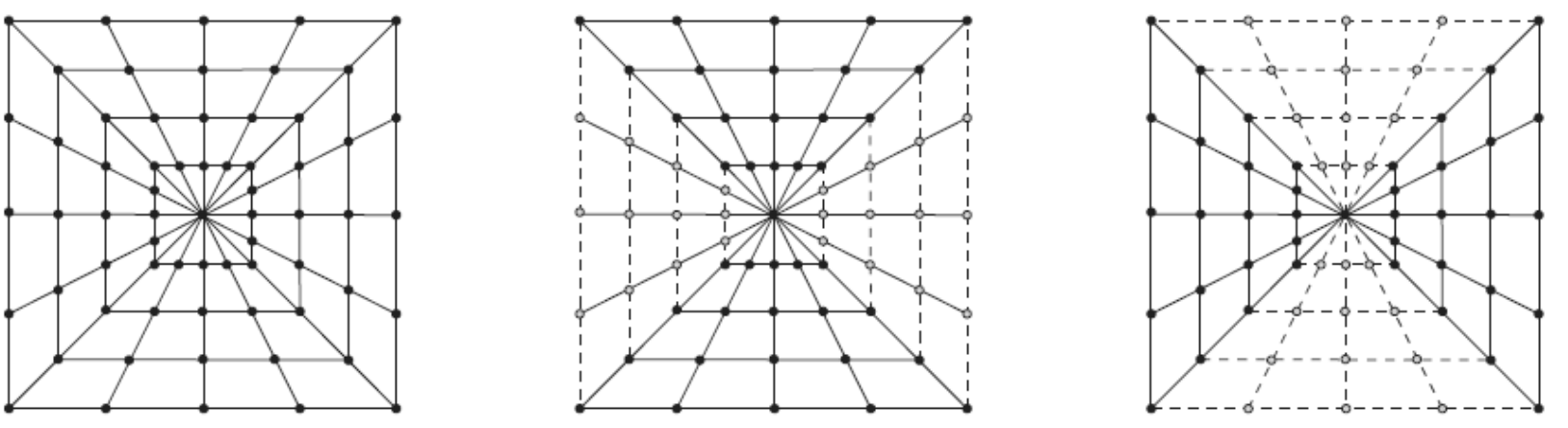}
%\put(-40,-17){$\Omega$}
%\hspace*{1cm}
%%\includegraphics[height=1.0in]{PPgrid1}
%\put(-40,-17){$\Omega^1$}
%\hspace*{1cm}
%%\includegraphics[height=1.0in]{PPgrid2}
%\put(-40,-17){$\Omega^2$}
\end{center}
\caption{The pseudo-polar grid $\Omega=\Omega^1\cup\Omega^2$ for $N=4$.}
\label{fig:PPgrid}
\end{figure}
For an $N\times N$ image $I:=\{I(u,v) : -N/2\le u,v \le N/2-1\}$, the pseudo-polar Fourier
transform $\hat{I}$ of $I$ evaluated on the pseudo-polar grid $\Omega$ is then defined to be
\[
\hat{I}(\ox,\oy) = \sum_{u,v=-N/2}^{N/2-1}I(u,v)e^{-\frac{2\pi i}{m_0}(u\ox+v\oy)}, \quad (\ox,\oy)\in\Omega,
\]
where $m_0 \ge N$ is an integer which for the PPFT is chosen to be $m_0 = 2N+1$ for computational reasons.

The existence of PPFT suggests that we can easily and naturally get a faithful FDST using this algorithm.
However, besides the delicateness of digitizing the continuum domain shearlets so that they form a tight
frame on the pseudo-polar grid, also the use of the PPFT is not at all straightforward. The PPFT as presented
\cite{ACDIS08} is not an isometry. The main obstacle is the highly nonuniform arrangement of the points on
the pseudo-polar grid. This intuitively suggests to downweight points in regions of very high density
by using weights which correspond roughly to the density compensation  weights underlying the continuous
change of variables. In fact, we will show that isometry is possible with sufficient radial oversampling
of the pseudo-polar grid; however, the weights will not be derivable from simple density compensation
arguments.

Summarizing, the FDST of an $N \times N$ image cascades the following steps:
\bitem
%\item[1)] PPFT with oversampling factor of 8 in the radial direction.
%\item[2)] Multiplication by `density-compensation-style' weights.
%\item[3)] Decomposing the pseudo-polar-indexed array into rectangular subbands
%\item[4)] Applying the 2D inverse fast Fourier transform to each subband.
\item[1)] Application of the PPFT with an oversampling factor $R$ in radial direction.
\item[2)] Weighting of the function values on the pseudo-polar grid by `density-com\-pen\-sa\-tion-style' weights.
\item[3)] Decomposing the pseudo-polar-indexed values by a scaled and sheared generating window into
rectangular subbands followed by application of the 2D iFFT to each array.
\eitem
This is an exact analogy of the discrete shearlet transform, in which the steps of Fourier transformation and pseudo-polar coordinate
change as well as the steps of decomposition into rectangular tiles and the inverse Fourier transform are collapsed into one step,
respectively. With a careful choice of the weights and the windows, this transform is an
isometry as we will show. Hence the inverse transform can be computed by merely taking the adjoint in each
step.

%***********************************************************************************

\subsection{Performance Measurement}

The above sketch does not uniquely specify an implementation; there is freedom in choice of weights and windows.
How can we decide if one choice is better than another one? It seems that currently researchers often use
overall system performance on isolated tasks, such as denoising and compression of specific standard images like
`Lena', `Barbara', etc. However, overall system performance for a system made up of a cascade of steps seems very
opaque and at the same time very particular. It seems far better from an intellectual viewpoint to carefully
decompose performance according to a more insightful array of tests, each one motivated by a particular
well-understood property we are trying to obtain.

We have developed  quantitative performance measures inspired by the desiderata we presented in Subsection~\ref{subsec:desiderata}.
Each performance measure produces a real value or a real-valued curve, thus providing a standardized framework for
evaluation and, especially, comparison.

%***********************************************************************************

\subsection{Relation with Previous Work}

Since the introduction of directional representation systems by many pioneer researchers (\cite{CD99,CD04,CD05a,CD05b,DV05,GKL06}),
various numerical implementations of their directional representation systems have been proposed. The closest ones are the curvelet,
contourlet, and previous shearlet algorithms, whose main features we now briefly survey.

%\vspace*{0.15cm}

{\em Curvelets} \cite{CDDY06}. The discrete curvelet transform is implemented in the software package {\em CurveLab}, which
comprises two different approaches. One is based on unequispaced FFTs, which are used to interpolate the function in the frequency
domain on different tiles with respect to different orientations of curvelets. The other is based on frequency wrapping, which wraps
each subband indexed by scale and angle into a fixed rectangle around the origin. Both approaches can be realized efficiently in
$O(N^2\log N)$ flops with $N$ being the image size. The disadvantage of this approach is the lack of an associated continuum domain
theory.

{\em Contourlets} \cite{DV05}. The implementation of contourlets is based on a directional filter bank, which produces a
directional frequency partitioning similar to the one generated by curvelets. The main advantage of this approach is that it allows
a tree-structured filter bank implementation, in which aliasing due to subsampling is allowed to exist. Consequently, one can
achieve great efficiency in terms of redundancy and good spatial localization. A drawback of this approach is that various
artifacts are introduced and that an associated continuum domain theory is missing.

{\em Shearlets} \cite{ELL08, Lim2010}.
%Shearlets parameterize directions by slopes rather than angles, which greatly support
%the digital setting, since the shear matrix preserves the structure of the integer grid.
In \cite{ELL08}, Easley et. al.
implemented the shearlet transform by applying the Laplacian pyramid scheme and directional filtering successionally.
One drawback is the deviation from the continuum domain theory. Another drawback is that the associated code was not made publicly
available. In contrast to this implementation which is based on bandlimited subband tiling --  similar to the implementation of
curvelets in \cite{CDDY06} -- in \cite{Lim2010}, Lim provided an implementation of the shearlet transform based on compactly
supported shearlet systems (see also \cite{KKL10}). These compactly supported shearlets are separable and provide excellent spatial
localization. The drawback is that they do not form a tight frame, hence, the synthesis process needs to be performed by iterative
methods. We further wish to mention two novel approaches \cite{KS09} and  \cite{HKZ10} for which however no implementation is yet
available nor was their focus on deriving an exact digitization of the continuum domain transform.

%\vspace*{0.15cm}

Summarizing, all the above implementations of directional representation systems have their own advantages and disadvantages, one
of the most common shortcomings being the lack of providing the unified treatment of the continuum and digital world.
%We shall not go into further the details of their comparison.
Our effort will now be put to provide a natural digitization of the shearlet theory (bandlimited shearlets) fulfilling the unified
treatment requirement as well as a software package \url{ShearLab} quantifying performances of directional representation systems.

%***********************************************************************************

\subsection{Contribution of this Paper}

The contributions of this paper are two-fold. Firstly, we introduce a fast digital shearlet transform (FDST) which is {\em rationally designed}
based on a {\em natural} digitization of shearlet theory. Secondly, we provide a variety of {\em quantitative performance measures}
for directional representations, which allow tuning and comparison of implementations. Our digital shearlet implementation was tuned
utilizing this framework, so we can provide the user community with an optimized representation.

All presented algorithms and tests are provided at \url{www.ShearLab.org}
in the spirit of reproducible research \cite{DMSSU08}.

%***********************************************************************************

\subsection{Contents}

Section \ref{sec:STfinitedata} introduces the fast digital shearlet transform FDST and proves isometry. In Section \ref{sec:ISTfinitedata}, we
then discuss two variants of an inverse digital shearlet transform, namely, a direct and an iterative approach.
In Section \ref{sec:props}, we prove several mathematical properties of the FDST such as decay properties
of digital shearlet coefficients. The following section, Section \ref{sec:implementation}, is concerned with details of the associated
\url{ShearLab} implementation at \url{www.ShearLab.org}. The FDST is then analyzed in Section \ref{sec:numerics} according to the
quantitative measures introduced in Section \ref{sec:qualitymeasures}.

%***********************************************************************************
%***********************************************************************************

\section{FDST for Finite Data}
\label{sec:STfinitedata}

We start by discussing the three steps in the FDST as described in Subsection \ref{subsec:DST},  which we for the
convenience of the reader briefly repeat:
%
%One fundamental idea in the definition of a digital shearlet transform for $N \times N$ images is the
%application of the pseudo-polar Fourier transform in order to map the data onto a Fourier domain grid perfectly
%adapted to the frequency tiling provided by continuum domain shearlet systems as can be seen in
%Fig. \ref{fig:ShearletsCone}. However, -- as already discussed -- the pseudo-polar Fourier transform is
%not an isometry. Hence exploiting the `naked version' of the pseudo-polar Fourier transform in our
%transform would prevent us from using the adjoint for the inverse transform. To circumvent this problem,
%a carefully chosen weighting of the points of this grid after performing the pseudo-polar Fourier transform
%needs to be introduced to yield an isometry. Certainly, these weights need to be closely related to the
%varying density concentration of the points on the pseudo-polar grid. It will turn out that one way to
%achieve this is by considering a slightly oversampled pseudo-polar grid in the radial direction. A second,
%fundamental idea in the definition of the digital shearlet transform is the careful design of the windows
%and their placements on the pseudo-polar grid.
%
%Summarizing, the shearlet transform of an $N \times N$ image will require the following steps:
\bitem
\item[1)] Application of the PPFT with an oversampling factor $R$ in radial direction.
\item[2)] Weighting of the function values on the pseudo-polar grid by `density-com\-pen\-sa\-tion-style' weights.
\item[3)] Decomposing the pseudo-polar-indexed values by a scaled and sheared generating window into
rectangular subbands followed by application of the 2D iFFT to each array.
%\item[1)] Application of the pseudo-polar Fourier transform with an oversampling factor $R$ in radial direction.
%\item[2)] Weighting of the function values on the pseudo-polar grid.
%\item[3)] Computation of the inner products of the values on the pseudo-polar
%grid with a scaled and sheared generating window follow by application of the 2D inverse fast Fourier transform to each array.
\eitem

We will also show that careful selection of the oversampling factor, of the weights, and of the windows yields an
isometric transform, which enables us to compute the inverse shearlet transform by its adjoint
(see Section \ref{sec:ISTfinitedata}).

%***********************************************************************************

\subsection{Weighted Pseudo-Polar Fourier Transform}
\label{subsec:wPPFT}

Given an $N \times N$ image $I$, it is well known that the Fourier transform $\hat{I}$ of $I$ evaluated on a rectangular
$N \times N$ grid is an isometry:
\beq \label{eq:Plancherel}
\sum_{u, v = -N/2}^{N/2-1}  |I(u,v)|^2 = \frac{1}{N^2} \sum_{\ox, \oy = -N/2}^{N/2-1} |\hat{I}(\ox,\oy)|^2.
\eeq
This is the Plancherel formula for a function defined on a finite group \cite{HR63}.

We now intend to obtain a similar formula for the Fourier transform of $I$ evaluated on the pseudo-polar grid. For this,
we first extend the definition of the pseudo-polar grid slightly by introducing an oversampling parameter $R > 0$ in
radial direction. This new grid, which we will denote in the sequel by $\Omega_R$, is defined by
\[
\Omega_R = \Omega_R^1 \cup \Omega_R^2,
\]
where
\begin{eqnarray} \label{eq:OmegaR1}
\Omega_R^1 & = &  \{(-\tfrac{2k}{R}\cdot\tfrac{2\ell}{N},\tfrac{2k}{R}) : -\tfrac{N}{2} \le \ell \le \tfrac{N}{2}, \, -\tfrac{RN}{2} \le k \le \tfrac{RN}{2}\},\\ \label{eq:OmegaR2}
\Omega_R^2 & = &  \{(\tfrac{2k}{R},-\tfrac{2k}{R}\cdot\tfrac{2\ell}{N}) : -\tfrac{N}{2} \le \ell \le \tfrac{N}{2}, \, -\tfrac{RN}{2} \le k \le \tfrac{RN}{2}\}.
\end{eqnarray}
This grid is illustrated in Figure \ref{fig:PPgridR}.
\begin{figure}[ht]
\begin{center}
\includegraphics[height=1.2in]{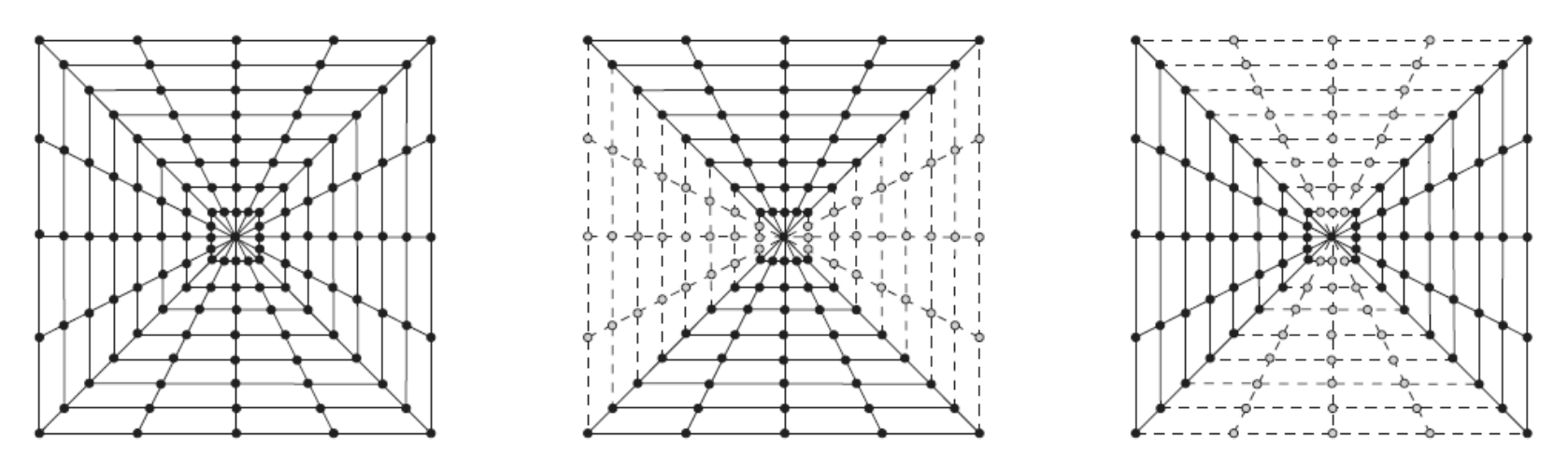}
%\put(-48,-17){$\Omega_R$}
%\hspace*{1cm}
%\includegraphics[height=1.2in]{PPgrid1R}
%\put(-48,-17){$\Omega^1_R$}
%\hspace*{1cm}
%\includegraphics[height=1.2in]{PPgrid2R}
%\put(-48,-17){$\Omega^2_R$}
\end{center}
\caption{The pseudo-polar grid $\Omega_R = \Omega_R^1 \cup \Omega_R^2$ for $N=4$ and $R=4$.}
\label{fig:PPgridR}
\end{figure}
Notice that the `original' pseudo-polar grid (see Figure~\ref{fig:PPgrid}) as introduced in Subsection \ref{subsec:DST}
is a special case of this definition when choosing $R=2$. Also observe that the center
\[
\cC = \{(0,0)\}
\]
appears $N+1$ times in $\Omega_R^1$ as well as $\Omega_R^2$, and the points on the seam lines
\begin{eqnarray*}
\cS_R^1 & = &  \{(-\tfrac{2k}{R},\tfrac{2k}{R}) : -\tfrac{RN}{2} \le k \le \tfrac{RN}{2}, \, k \neq 0\},\\
\cS_R^2 & = &  \{(\tfrac{2k}{R},-\tfrac{2k}{R}) : -\tfrac{RN}{2} \le k \le \tfrac{RN}{2}, \, k \neq 0\},
\end{eqnarray*}
appear in both $\Omega_R^1$ and $\Omega_R^2$. Later, we will also utilize a further partitioning of the sets $\Omega_R^1$
and $\Omega_R^2$ as
\[
\Omega_R^1 = \Omega_R^{11} \cup \cC \cup \Omega_R^{12}
\qquad \mbox{and} \qquad
\Omega_R^2 = \Omega_R^{21} \cup \cC \cup \Omega_R^{22},
\]
where
\begin{eqnarray*}
\Omega_R^{11} & = &  \{(-\tfrac{2k}{R}\cdot\tfrac{2\ell}{N},\tfrac{2k}{R}) : -\tfrac{N}{2} \le \ell \le \tfrac{N}{2}, \, 1 \le k \le \tfrac{RN}{2}\},\\
\Omega_R^{12} & = &  \{(-\tfrac{2k}{R}\cdot\tfrac{2\ell}{N},\tfrac{2k}{R}) : -\tfrac{N}{2} \le \ell \le \tfrac{N}{2}, \, -\tfrac{RN}{2} \le k \le -1\},\\
\Omega_R^{21} & = &  \{(\tfrac{2k}{R},-\tfrac{2k}{R}\cdot\tfrac{2\ell}{N}) : -\tfrac{N}{2} \le \ell \le \tfrac{N}{2}, \, 1 \le k \le \tfrac{RN}{2}\}\\
\Omega_R^{22} & = &  \{(\tfrac{2k}{R},-\tfrac{2k}{R}\cdot\tfrac{2\ell}{N}) : -\tfrac{N}{2} \le \ell \le \tfrac{N}{2}, \, -\tfrac{RN}{2} \le k \le -1\}.
\end{eqnarray*}

Now our goal is to choose weights $w : \Omega_R \to \bR^+$ so that, for any $N \times N$ image $I$,
\beq \label{eq:ppPlancherel}
\sum_{u, v = -N/2}^{N/2-1} |I(u,v)|^2
= \sum_{(\ox, \oy) \in \Omega_R} w(\ox,\oy) \cdot |\hat{I}(\ox,\oy)|^2,
\eeq
where here we modify the definition of the Fourier transform according to \cite{ACDIS08} and define it by
\begin{equation}\label{def:ppft}
\hat{I}(\ox,\oy) = \sum_{u, v = -N/2}^{N/2-1}  I(u,v) e^{-\frac{2\pi i}{m_0}(u \ox + v \oy)},\quad (\ox,\oy)\in\Omega_R,
\end{equation}
where $m_0 \ge N$. Also notice that the factor $1/N^2$ appearing in \eqref{eq:Plancherel} will now be hidden in the weights $w(\ox,\oy)$.

We start by computing the right hand side of \eqref{eq:ppPlancherel}:
\begin{eqnarray*}
\lefteqn{\sum_{(\ox, \oy) \in \Omega_R} w(\ox,\oy) \cdot |\hat{I}(\ox,\oy)|^2}\\
& = & \sum_{(\ox, \oy) \in \Omega_R} w(\ox,\oy)
\cdot \left|\sum_{u, v = -N/2}^{N/2-1}  I(u,v) e^{-\frac{2\pi i}{m_0}(u \ox + v \oy)}\right|^2\\
& = & \sum_{(\ox, \oy) \in \Omega_R} w(\ox,\oy)
\cdot \left[\sum_{u, v = -N/2}^{N/2-1} \sum_{u', v' = -N/2}^{N/2-1} \hspace*{-0.18cm} I(u,v) \overline{I(u',v')}
e^{-\frac{2\pi i}{m_0}((u-u') \ox + (v-v') \oy)}\right]\\
& = & \sum_{(\ox, \oy) \in \Omega_R} w(\ox,\oy) \cdot  \sum_{u, v = -N/2}^{N/2-1} |I(u,v)|^2\\
& & + \sum_{\stackrel{u, v, u', v' = -N/2}{(u,v) \neq (u',v')}}^{N/2-1} I(u,v) \overline{I(u',v')} \cdot
\left[ \sum_{(\ox, \oy) \in \Omega_R} w(\ox,\oy) \cdot e^{-\frac{2\pi i}{m_0}((u-u') \ox + (v-v') \oy)}
\right].
\end{eqnarray*}
Choosing $I=c_{u_1,v_1}\delta{(u-u_1,v-v_1)}+c_{u_2,v_2}\delta{(u-u_2,v-v_2)}$ for all $-N/2\le u_1,$ $v_1,$ $u_2,$ $v_2\le N/2-1$
and for all $c_{u_1,v_1},c_{u_2,v_2}\in\CC$, we can conclude that \eqref{eq:ppPlancherel} holds if and only if
\begin{equation}\label{eq:isometry0}
 \sum_{(\ox, \oy) \in \Omega_R} w(\ox,\oy) \cdot e^{-\frac{2\pi i}{m_0}(u\ox + v\oy)} =
  \delta(u,v),\quad -N+1 \le u, v \le N-1.
\end{equation}
This is equivalent to the two conditions
\begin{eqnarray}
\label{eq:isometry01}
\sum_{(\ox, \oy) \in \Omega_R} w(\ox,\oy) \cdot \cos(\tfrac{2\pi}{m_0} (u \ox + v \oy))
&=& \delta(u,v), %\quad \mbox{and}
\\ \label{eq:isometry02}
\sum_{(\ox, \oy) \in \Omega_R} w(\ox,\oy) \cdot \sin(\tfrac{2\pi}{m_0} (u \ox + v \oy))
&=&0,
\end{eqnarray}
for all $-N+1 \le u, v \le N-1$. In view of the symmetry of the pseudo-polar grid, it is natural to impose the
following symmetry conditions on the weights:
\renewcommand{\labelenumi}{{\rm [S\arabic{enumi}]}}
\begin{enumerate}
\item $w(\ox, \oy) = w(\oy, \ox)$,\;\;\, $(\ox, \oy) \in \Omega_R$,
\item $w(\ox, \oy) = w(-\oy, \ox)$, $(\ox, \oy) \in \Omega_R$,
\item $w(\ox, \oy) = w(-\ox, \oy)$, $(\ox, \oy) \in \Omega_R$,
\item $w(\ox, \oy) = w(\ox, -\oy)$, $(\ox, \oy) \in \Omega_R$.
\end{enumerate}
In this case, \eqref{eq:isometry02} automatically holds. By the sum formula for trigonometric functions,  \eqref{eq:isometry01} is
then equivalent to
\[
\sum_{(\ox, \oy) \in \Omega_R} w(\ox,\oy) \cdot
[\cos(\tfrac{2\pi}{m_0} u \ox)\cos(\tfrac{2\pi}{m_0} v \oy)-\sin(\tfrac{2\pi}{m_0} u \ox)\sin(\tfrac{2\pi}{m_0} v \oy)] = \delta(u,v)
\]
 for all $-N+1 \le u, v \le N-1$.
Again, by the symmetry of the weights, this is equivalent to
\begin{equation}\label{eq:isometry1}
\sum_{(\ox, \oy) \in \Omega_R} w(\ox,\oy) \cdot
[\cos(\tfrac{2\pi}{m_0} u \ox)\cos(\tfrac{2\pi}{m_0} v \oy)] = \delta(u,v)
\end{equation}
for all $-N+1\le u,v\le N-1$. This is a linear system of equations with $RN^2/4+RN/2+1$ unknows and $(2N-1)^2$ equations, wherefore, in general, we need the
oversampling factor $R$ to be at least $16$ to enforce solvability.

For symmetry reasons, we can now restrict our attention to one quarter of a cone, say $\Omega_R^{21}$. Using \eqref{eq:OmegaR1},
\eqref{eq:OmegaR2}, and \eqref{eq:isometry1}, we then obtain the following equivalent condition to \eqref{eq:isometry0}:
\begin{eqnarray} \nonumber
\delta(u,v) & \hspace*{-0.05cm} = \hspace*{-0.05cm} & w(0,0) + 4 \cdot \hspace*{-0.1cm} \sum_{\ell = 0, N/2} \sum_{k = 1}^{RN/2}
w(\tfrac{2k}{R}, -\tfrac{2k}{R}\cdot\tfrac{2\ell}{N}) \cdot \cos(2\pi  u\cdot \tfrac{2k}{m_0R})\cdot \cos(2\pi v \cdot \tfrac{2k}{m_0R}\cdot\tfrac{2\ell}{N})\\  \label{eq:weightcondition}
& & + 8 \cdot \sum_{\ell = 1}^{N/2-1} \sum_{k = 1}^{RN/2}
w(\tfrac{2k}{R}, -\tfrac{2k}{R}\cdot\tfrac{2\ell}{N})  \cdot \cos(2\pi  u\cdot \tfrac{2k}{m_0R})\cdot \cos(2\pi v \cdot \tfrac{2k}{m_0R}\cdot\tfrac{2\ell}{N})
\end{eqnarray}
for all $-N+1\le u, v \le N-1$. Concluding, we have the following result.

\begin{theorem} \label{theo:weightedisometry}
Let $N$ be even, let $\Omega_R = \Omega_R^1 \cup \Omega_R^2$ be the pseudo-polar grid defined in \eqref{eq:OmegaR1} and
\eqref{eq:OmegaR2}, and let $w : \Omega_R \to \bR^+$ be a weight function satisfying the symmetry conditions {\rm [S1]} --
{\rm [S4]}. Then
\[
\sum_{u, v = -N/2}^{N/2-1} |I(u,v)|^2
= \sum_{(\ox, \oy) \in \Omega_R} w(\ox,\oy) \cdot |\hat{I}(\ox,\oy)|^2
\]
holds if and only if the weights $w(\ox,\oy), (\ox,\oy)\in\Omega_R$ satisfy condition \eqref{eq:weightcondition}.
Moreover, in general, $R$ needs to be at least $16$ for such weights to exist.
\end{theorem}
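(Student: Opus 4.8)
The plan is to turn the claimed weighted Plancherel identity into a finite linear system for the weights, read off the characterization \eqref{eq:weightcondition}, and then obtain the lower bound on $R$ by a degrees-of-freedom count. First I would fix an arbitrary $N\times N$ image $I$, insert the definition \eqref{def:ppft} of $\hat I$ into the right-hand side, expand $|\hat I(\ox,\oy)|^2$ as a double sum over spatial indices $(u,v)$ and $(u',v')$, and interchange the (finite) summations over $\Omega_R$ and over the spatial grid. The diagonal part of the double sum contributes $\bigl(\sum_{\Omega_R} w\bigr)\sum_{u,v}|I(u,v)|^2$, while each off-diagonal cross term $I(u,v)\overline{I(u',v')}$ is multiplied by $\sum_{(\ox,\oy)\in\Omega_R} w(\ox,\oy)\,e^{-\frac{2\pi i}{m_0}((u-u')\ox+(v-v')\oy)}$. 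Since $(u-u',v-v')$ runs over all of $\{-N+1,\dots,N-1\}^2$, testing on one- and two-point images shows that the identity holds for every $I$ if and only if \eqref{eq:isometry0} holds, and this implication is reversible.

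Next I would separate \eqref{eq:isometry0} into its cosine part \eqref{eq:isometry01} and sine part \eqref{eq:isometry02}, and bring in the symmetry hypotheses [S1]--[S4]. Pairing each point of $\Omega_R$ with its images under $(\ox,\oy)\mapsto(-\ox,\oy)$ and $(\ox,\oy)\mapsto(\ox,-\oy)$ makes the odd sine sum \eqref{eq:isometry02} vanish identically, so that condition is automatic. Expanding $\cos(\tfrac{2\pi}{m_0}(u\ox+v\oy))$ by the angle-addition formula, the mixed term $\sin(\tfrac{2\pi}{m_0}u\ox)\sin(\tfrac{2\pi}{m_0}v\oy)$ cancels under the same symmetries, leaving \eqref{eq:isometry1}. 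Restricting the sum to the single quadrant $\Omega_R^{21}$ and accounting for multiplicities --- the center $\cC$ occurs once, the axis points $\ell=0$ and the seam points $\ell=N/2$ are each reproduced four times among the eight symmetry images while generic interior points are reproduced eight times --- rewrites \eqref{eq:isometry1} as exactly \eqref{eq:weightcondition}. Since every step is an equivalence, this proves the ``if and only if''.

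For the final assertion I would count. After imposing [S1]--[S4] (in particular $w(\ox,\oy)=w(\ox,-\oy)$, which identifies the index $\ell$ with $-\ell$), the weight function on all of $\Omega_R$ is determined by its values on $\Omega_R^{21}$ with $\ell\in\{0,1,\dots,N/2\}$ together with $w(0,0)$, i.e.\ by $RN^2/4+RN/2+1$ real unknowns; meanwhile \eqref{eq:weightcondition} imposes one equation for each of the $(2N-1)^2$ pairs $(u,v)$. A solution can therefore exist in general only if $RN^2/4+RN/2+1\ge(2N-1)^2$, whose leading-order balance forces $R\ge 16$.

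The step I expect to cause the most trouble is the passage to \eqref{eq:weightcondition}: getting the combinatorial multiplicity factors right and confirming that the symmetry reductions fold the correct orbits together, since the separate treatment of the repeated center, the two seam lines, and the coordinate axes is easy to get slightly wrong. The ``moreover'' is by comparison soft --- it is only the necessary dimension count, and does not by itself guarantee that an admissible \emph{positive} weight function exists when $R=16$, which is a constructive matter addressed separately.
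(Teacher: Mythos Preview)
Your proposal is correct and follows essentially the same route as the paper: the paper also expands $|\hat I|^2$, tests on two-point images to obtain \eqref{eq:isometry0}, uses the symmetries [S1]--[S4] to discard \eqref{eq:isometry02} and the $\sin\cdot\sin$ cross term, and then folds onto $\Omega_R^{21}$ with the same $1/4/8$ multiplicity bookkeeping to arrive at \eqref{eq:weightcondition}, finishing with the identical degrees-of-freedom count $RN^2/4+RN/2+1$ versus $(2N-1)^2$. Your cautionary remarks about the orbit multiplicities and about the dimension count being only a necessary condition are well placed and consistent with the paper's treatment.
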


\subsection{Weight Functions}
\label{subsec:weights}

To avoid high complexity in the computation of the weights satisfying Theorem~\ref{theo:weightedisometry}, we relax
the requirement for exact isometric weighting. Instead of representing the weights as the solution of a large system of
equations, they will be represented in terms of an undercomplete basis for functions on the pseudo-polar grid. More precisely,
we first design basis functions $w_1, \ldots, w_n:\Omega_R\rightarrow \bR^+$ such that $\sum_{j=1}^n w_j(\ox,\oy)\neq 0$ for
all $(\ox,\oy)\in\Omega_R$. We then define the weight function $w:\Omega_R\rightarrow\bR^+$ to be  $w:=\sum_{j=1}^n c_jw_j$,
with $c_1,\ldots,c_n$ being nonnegative constants. These coefficients are determined by solving \eqref{eq:weightcondition}
with respect to this weight function $w$ using the least square method.
%, we can finally derive the constants $c_1,\ldots,c_n$ and express the weight
%function $w$ as a linear combination of those basis functions $w_1,\ldots, w_n$.
We compute the coefficients in this expansion once for a given problem size; then hardwire them in the algorithm.

\subsubsection{Recommended Choices of Weights}
\label{subsec:choice}

In what follows, we present several designs of basis functions, each one providing nearly isometric weighting.
Notice that, slightly abusing notation, we will use $(\ox, \oy)$ and $(k,\ell)$ interchangeably. The weighting
for the three choice we recommend is displayed in Figure \ref{fig:001}.

\underline{\sc Choice 1:}
Our first choice are the following seven functions $w_1, \ldots, w_7$:\\
% so that $\sum_{j=1}^7 w_j(\ox, \oy) \neq 0$ for each $(\ox, \oy) \in \Omega_R$:\\
{\em Center}: $w_1 = 1_{(0,0)} \mbox{ and } w_2=1_{\{(\ox, \oy) : |k|=1\}}$,\\
{\em Boundary}: $w_3=1_{\{(\ox, \oy) : |k|=NR/2,\, \ox=\oy\}} \mbox{ and } w_4=1_{\{(\ox, \oy) : |k|=NR/2,\, \ox\neq\oy\}}$,\\
{\em Seam lines}: $w_5=|k| \cdot 1_{\{(\ox, \oy) : 1<|k|<NR/2,\, \ox=\oy\}}, w_6=1_{\{(\ox, \oy) : |k|=NR/2-3,\,\ox=\oy\}}$,\\
{\em Interior}: $w_7=|k| \cdot 1_{\{(\ox, \oy) : 1<|k|<NR/2,\,\ox\neq\oy\}}$.\\
%The weighting generated by our recommended choice of coefficients for $w_1, \ldots, w_7$ is displayed in Figure \ref{fig:001}.

\underline{\sc Choice 2:}
This is a simplified version of `Choice 1' using the 5 functions:\\
% on the pseudo-polar grid such that $\sum_{j=1}^5 w_j(\ox, \oy) \neq 0$ for each $(\ox, \oy) \in \Omega_R$:\\
{\em Center}: $w_1 = 1_{(0,0)}$,\\
{\em Boundary}: $w_2=1_{\{(\ox, \oy) : |k|=NR/2,\, \ox=\oy\}} \mbox{ and } w_3=1_{\{(\ox, \oy) : |k|=NR/2,\, \ox\neq\oy\}}$,\\
{\em Seam lines}: $w_4=|k| \cdot 1_{\{(\ox, \oy) : 1\le|k|<NR/2,\, \ox=\oy\}}$,\\
{\em Interior}: $w_5=|k| \cdot 1_{\{(\ox, \oy) : 1\le|k|<NR/2,\,\ox\neq\oy\}}$.\\
%The weighting generated by our recommended choice of coefficients for $w_1, \ldots, w_5$ is displayed in Figure \ref{fig:001}.

\underline{\sc Choice 3:}
Finally, we suggest the following $N/2+2$ functions on the pseudo-polar grid:\\
% such that, for each $(\ox, \oy) \in \Omega_R$, $\sum_{j=1}^{N/2+2} w_j(\ox, \oy) \neq 0$, defined by:\\
{\em Center}: $w_1 = 1_{(0,0)}$,\\
{\em Radial Lines}: $w_{\ell+2}=1_{\{(\ox, \oy) :  1<|k|<NR/2,\, \oy=\frac{\ell}{N/2}\ox\}},\quad \ell=0,1,\ldots,N/2$.\\
%The weighting generated by our recommended choice of coefficients for $w_1, \ldots, w_{N/2+2}$ is displayed in Figure \ref{fig:001}.

\begin{figure}[ht]
\begin{center}
\includegraphics[height=1.2in]{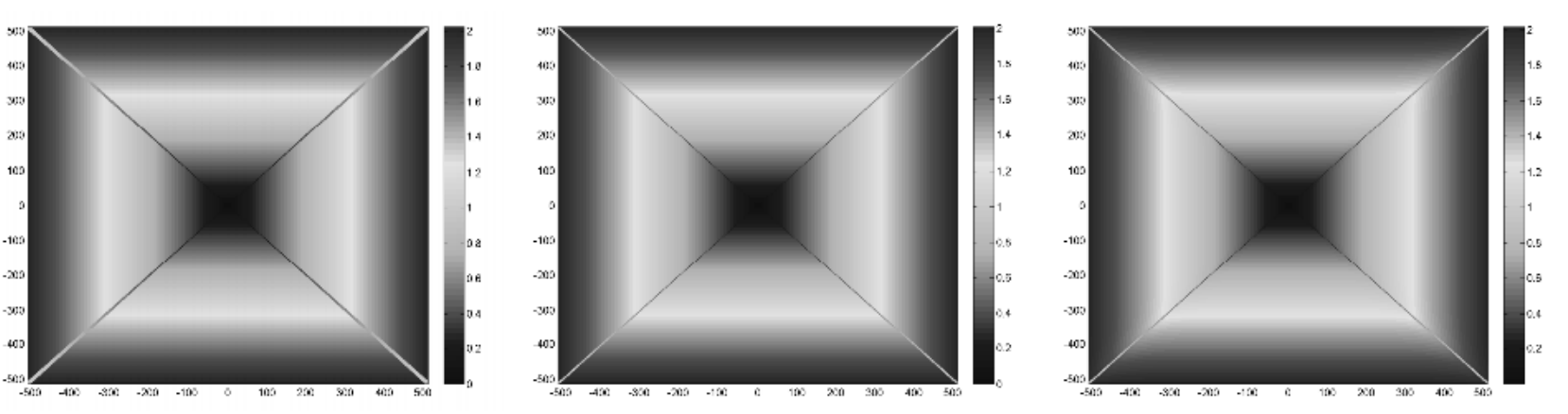}
\end{center}
\caption{Recommended choices of weighting of the pseudo-polar grid for $N=128$ and  $R=8$. The Choice 1 -- 3 are displayed from left to
right.}
\label{fig:001}
\end{figure}

\subsubsection{Comparison of Weight Functions}
\label{subsec:comparison}

The patterns of the weights are seemingly similar in view of Figure~\ref{fig:001}. However, their performances can be quite different
depending on the chosen performance measure. We mention that the measures chosen below are also part of our framework of performance
measures for parabolic scaling algorithms discussed in Section~\ref{sec:qualitymeasures}.

Letting $R=8$, we generate a sequence of 5 random images $I_1$, $\ldots$, $I_5$ of size $N\times N$ with standard normally distributed entries.
We use the following measure to compare the performance of different weights:
\[
M_{isom} := \frac15\sum_{i=1}^5\frac{\|P^\star w P I_i-I_i\|_2}{\|I_i\|_2},
\]
where $P: I\rightarrow \hat{I}$ denotes the PPFT from \eqref{def:ppft} and $w: J\rightarrow J_w$ -- by abusing notation -- denotes the `weighting operator'
$J_w={w} \cdot J$, for a image $J:\Omega_R\rightarrow \bC$ and a weight function
$w:\Omega_R\rightarrow\bR^+$.
%, and $\|I\|_2^2:=\sum_{u,v=-N/2}^{N/2-1}|I(u,v)|^2$ is the Frobenius norm of an $N\times N$ matrix $I$.
Table \ref{tab:1} displays the performance of the weights from Choice 1 -- 3 with respect to this measure.
\begin{table}[ht]
\label{tab:1}
\caption{Comparison using random images}
\begin{tabular}{p{1.7cm}p{1.7cm}p{1.7cm}p{1.7cm}p{1.7cm}p{1.9cm}}
\toprule
$N$ & 32 & 64 & 128 & 256 & 512 \\
\midrule
Choice 1 & 4.3E-3 & 2.6E-3 & 2.2E-3 & 1.4E-3 & 9.3E-4 \\
Choice 2 & 4.2E-3 & 4.0E-3 & 1.8E-3 & 1.5E-3 & 8.8E-4 \\
Choice 3 & 9.8E-3 & 6.2E-3 & 3.4E-3 & 2.1E-3 & N/A \\
\bottomrule
\end{tabular}
\end{table}

Next, we choose the real image `Barbara', which we denote by $I$, and the measure $\frac{\|P^\star w P I-I\|_2}{\|I\|_2}$ to
compare the performance of the different choices of weights.
\begin{table}[ht]
\label{tab:2}
\caption{Comparison using `Barbara'}
\begin{tabular}{p{1.7cm}p{1.7cm}p{1.7cm}p{1.7cm}p{1.7cm}p{1.9cm}}
\toprule
$N$ & 32 & 64 & 128 & 256 & 512 \\
\midrule
Choice 1 & 2.4E-3 & 1.6E-3 & 1.1E-3 & 5.2E-4 & 2.2E-4 \\
Choice 2 & 2.8E-3 & 1.2E-3 & 8.3E-4 & 3.9E-4 & 1.5E-4 \\
Choice 3 & 5.6E-3 & 2.8E-3 & 2.2E-3 & 9.1E-4 & N/A \\
\bottomrule
\hline
\end{tabular}
\end{table}

From  Tables \ref{tab:1} and \ref{tab:2}, it can be seen that the operator $P^\star wP$ seems to converge to an identity operator as
$N \to \infty$. The  data also indicates that it is justifiable to define weights which are linearly
increasing along the radial direction.

When using iterative methods, for instance, the conjugate gradient method, for  computing the inverse, a weight $w$ plays the
role of a preconditioner. Therefore its performance as such can be effectively measured by the condition number of the operator
$P^\star w P$, i.e., $cond(P^\star w P)=\lambda_{max}(P^\star w P)/\lambda_{min}(P^\star w P)$. This measure is
displayed in Table \ref{tab:3} for our selected three choices of weights. Notice that, for each choice of a weight function with the
exception of Choice 3, the condition numbers of $P^\star w P$ are always smaller than $2$.
\begin{table}[ht]
\label{tab:3}
\caption{Comparison of $cond(P^\star w P)$}
\begin{tabular}{p{1.7cm}p{1.7cm}p{1.7cm}p{1.7cm}p{1.7cm}p{1.9cm}}
\toprule
$N$ & 32 & 64 & 128 & 256 & 512 \\
\midrule
Choice 1 &  1.328 & 1.483 & 1.621 &1.726 & 1.834 \\
Choice 2 & 1.379 & 1.503 & 1.621 & 1.731 & 1.833 \\
Choice 3 & 1.760 & 1.887 & 2.001 & 2.104 & N/A \\
\bottomrule
\end{tabular}
\end{table}

\subsection{Windowing}
\label{subsec:Windowing}

According to our discussion of the main steps of the FDST in Section \ref{sec:STfinitedata}, after
performing a weighted pseudo-polar Fourier transform, the data has to be windowed using scaled and sheared versions of a generating
window function.

In this section, we now define such a set of window functions, which we will coin {\em digital shearlets}, and prove that
these -- similar to the continuum domain -- form a tight frame for functions $J : \Omega_R \to \CC$. We remark that this construction
is a digitization of the continuum domain discrete shearlets introduced in \cite{GKL06} (compare also \eqref{eq:shearlets} and
\eqref{eq:shearletsystem}). However, it is far from obvious that again a tight frame is derived, since we here consider a finite
domain.

For the convenience of the reader, we first briefly recall the notion of a tight frame in this particular situation. Let
$f, g : \Omega_R \rightarrow \bC$ be two functions defined on $\Omega_R$. Then, the inner product $\langle f, g\rangle_{\Omega_R}$
is defined to be $\langle f,g\rangle_{\Omega_R}:=\sum_{x\in \Omega_R}(x)\overline{g(x)}$. A sequence
$\{\varphi_\lambda: \Omega_R\rightarrow\bC :\lambda \in \Lambda\}$ with $\Lambda$ being an indexing set is a {\em tight frame} for functions
$J : \Omega_R \to \CC$, if
\[
\sum_{\lambda \in \Lambda} |\langle J,\varphi_\lambda\rangle_{\Omega_R}|^2 = \langle J,J\rangle_{\Omega_R}.
\]
It then follows from basic frame theory (see \cite{Chr03}) that this allows recovery of a function $J : \Omega_R \to \CC$ from
its coefficients $( \langle J,\varphi_\lambda\rangle_{\Omega_R})_{\lambda \in \Lambda}$ by
computing
%\[
%J(\ox, \oy) = \sum_{\lambda \in \Lambda} \Bigg[\sum_{(\tx, \ty) \in \Omega_R} J(\tx, \ty) \overline{\varphi_\lambda(\tx, \ty)}
%\Bigg] \varphi_\lambda(\ox, \oy).
%\]
\[
J = \sum_{\lambda \in \Lambda} \langle J,\varphi_\lambda\rangle_{\Omega_R} \varphi_\lambda.
\]

Despite the danger of repeating ourselves, let us mention that our fundamental goal is to introduce digital shearlets as the
exact digitization of continuum domain shearlets. We now describe the construction step by step, which will give evidence
to the fact that we achieved this goal. There will be one step though -- when defining the modulation -- where we have to
slightly deviate from an exact digitization, and we will explain the reasons for this.

We start by defining the scaling function and the generating digital shearlet. For this, let $j_L:=-\lceil\log_4(R/2)\rceil$,
which will soon be shown to be the lowest possible scale. Let $W_0$ be the Fourier transform of the Meyer scaling function such that
\beq \label{eq:defW0}
\Sp W_0 \subseteq [-1,1]\quad\mbox{and}\quad W_0(\pm 1)=0,
\eeq
and let $V_0$ be a `bump' function satisfying
\[
\Sp V_0 \subseteq [-\tfrac32,\tfrac32] \qquad \mbox{with} \qquad V_0(\xi)\equiv 1\mbox{ for } |\xi|\le 1, \xi\in\bR.
\]
Then we define the {\em scaling function} $\phi$ for the digital shearlet system to be
\[
\hat\phi(\xi_1,\xi_2) = W_0(4^{-j_L}\xi_1)V_0(4^{-j_L}\xi_2), \quad (\xi_1,\xi_2)\in\bR^2.
\]
We will later restrict this function to the pseudo-polar grid.

Let next $W$ be the Fourier transform of the Meyer wavelet function with
\beq \label{eq:supportW}
\Sp W \subseteq [-4,\tfrac14] \cup [\tfrac14,4] \quad \mbox{and} \quad W(\pm \tfrac14)=W(\pm 4)=0,
\eeq
as well as
\beq \label{eq:summabilityW}
|W_0(4^{-j_L}\xi)|^2 + \sum_{j =j_L}^{\lceil\log_4 N\rceil}  |W(4^{-j} \xi)|^2 = 1 \qquad \mbox{for all } |\xi| \le N,\; \xi \in \bR.
\eeq
We further choose $V$ to be a `bump' function  satisfying
\beq \label{eq:supportV}
\Sp V \subseteq [-1,1] \quad \mbox{and} \quad V(\pm 1)=0,
\eeq
and also
\[
|V(\xi-1)|^2 + |V(\xi)|^2 + |V(\xi+1)|^2 = 1 \qquad \mbox{for all } |\xi| \le 1, \; \xi \in \bR.
\]
Notice that this implies
\beq \label{eq:summabilityV2}
\sum_{s=-2^j}^{2^j} |V(2^j\xi-s)|^2 = 1 \qquad \mbox{for all } |\xi| \le 1, \; \xi \in \bR \mbox{ and } j\ge0,
\eeq
which will become important for the analysis of frame properties. For the choice of $V_0$, $W_0$, $V$, and $W$ in our
implementation, we refer to Section~\ref{sec:implementation}.
Then the {\em generating shearlet} $\psi$  is defined as
\beq \label{eq:digitalshearlet}
\hat\psi(\xi_1,\xi_2) = W(\xi_1)V(\tfrac{\xi_2}{\xi_1}), \quad (\xi_1,\xi_2)\in\bR^2.
\eeq

We will now define digital shearlets on $\Omega_R^{21}$ and extend the definition to the other cones by symmetry. At this time, we
assume $R$ and $N$ are both positive, even integers and $N=2^{n_0}$ for some integer $n_0\in\bN$.
%\xz{Delete:At this time we assume $R \ge 8$, which is sufficient for the existence of appropriate weights according to the
%comment in Theorem \ref{theo:weightedisometry}. In addition, we assume that $R=2^{n_0}$ for some integer $n_0$.}

For this, we first analyze the exact digitization of the coefficients of the discrete shearlet system from
Subsection \ref{subsec:defST} for a function $J : \Omega_R \to \CC$ by using the shearlets $\psi$ defined in
\eqref{eq:digitalshearlet}. This will lead to the appropriate range of scales and to the support of a scaled and sheared version of
the shearlet $\psi$.

When restricting to the cone $\Omega_R^{21}$, the exact digitization of the coefficients of the discrete
shearlet system is
\beq \label{eeq:digitalshearletcoeff1Step}
\sum_{\omega:=(\ox,\oy) \in \Omega_R^{21}} J(\ox,\oy)
2^{-j\frac{3}{2}} \overline{\hat{\psi}(S_s^T A_{4^{-j}} \omega)} e^{-2\pi i\ip{A_{4^{-j}} S_s m}{\omega}},
\eeq
where $j$, $s$, and $m$ are to be determined.
%\xz{Delete:$\quad j_0 \le j \le \lceil\log_4N\rceil, -2^j \le s \le 2^j$.}
The choice of $\psi$ leads to the coefficients
\begin{eqnarray*}
\lefteqn{\sum_{\omega:=(\ox,\oy) \in \Omega_R^{21}} J(\ox,\oy)
2^{-j\frac{3}{2}} \overline{W(4^{-j}\omega_x)V(s+2^j\tfrac{\omega_y}{\omega_x})} e^{-2\pi i\ip{A_{4^{-j}} S_s m}{\omega}}}\\
& = & \sum_{k=1}^{RN/2} \sum_{\ell = -N/2}^{N/2} J(\ox,\oy)
2^{-j\frac{3}{2}} \overline{W(4^{-j}\tfrac{2k}{R})}  \overline{V(s-2^{j+1}\tfrac{\ell}{N})} e^{-2\pi i \ip{m}{S_s^T A_{4^{-j}} \omega}}.
\end{eqnarray*}
The support conditions \eqref{eq:supportW} and \eqref{eq:supportV} of $W$ and $V$, respectively, imply
\[
k = 4^{j-1} \tfrac{R}{2} + n_1, \quad n_1 = 0 ,\ldots, 4^{j-1}\cdot \tfrac{15 R}{2},
\]
as well as
\[
\ell = 2^{-j-1} N(s-1) + n_2, \quad n_2 = 0, \ldots, 2^{-j} N,
\]
if we assume $k$ and $\ell$ to be positive integers.

We next analyze the support properties in radial direction.
If $j<-\lceil\log (R/2)\rceil$, then $k<1$, which corresponds to only one point -- the origin --, and this is dealt with by
the scaling function. Hence the lowest possible scale is $j_L=-\lceil\log(R/2)\rceil$.
%In case  $j<j_0$, we have $k \le 1$ which is dealt with by the scaling function $W_0(\xi)$. Notice that the case $j = j_0$ need to be
%included, since it is not completely covered by the scaling function $W_0(\xi)$.
If $j > \lceil\log_4 N\rceil$, we have $k \ge \frac{RN}{2}$. Hence the value $W(1/4) = 0$ (cf. \eqref{eq:supportW})
is placed on the boundary, and thus these scales can be omitted. This implies that the highest possible scale is $j_H:=\lceil\log_4 N\rceil$.
Hence, the scaling parameter will be chosen to be
\[
j \in  \{j_L, \ldots, j_H\}.
\]
The radial support of the windows associated with scales $j_L < j < j_H$ is
\beq \label{eq:defi_k}
k = 4^{j-1} \tfrac{R}{2} + n_1, \quad n_1 = 0 ,\ldots, 4^{j-1} \cdot \tfrac{15 R}{2},
\eeq
and the radial support of the windows associated with the scales $j_L=-\lceil \log_4(R/2)\rceil$ and $j_H=\lceil\log_4 N\rceil$ is
\beq \label{eq:defi_k_H_L}
\begin{aligned}
k &= n_1,  &n_1&=1,\ldots,4^{j_L+1}\tfrac{R}{2}, &\mbox{ for } &j=j_L,\\
k &= 4^{j_H-1} \tfrac{R}{2}+n_1, &n_1&=0, \ldots, \tfrac{RN}{2}-4^{j_H-1}\tfrac{R}{2},& \mbox{ for }
&j = j_H.
\end{aligned}
\eeq

We further analyze the precise support properties in angular direction. First, we examine the case $j\ge 0$. If $s > 2^j$, we
have $\ell \ge N/2$. Hence the value $V(-1)=0$ (cf. \eqref{eq:supportV}) is placed on the seam line, and these parameters can be
omitted. By symmetry, we also obtain $s \ge -2^j$. Thus the shearing parameter will be chosen to be
\[
s \in \{-2^j,\ldots,2^j\}
\]
The angular support of the windows at scale $j$ associated with shears $-2^j < s < 2^j$ is
\beq \label{eq:defi_ell}
\ell = 2^{-j-1} N(s-1) + n_2, \quad n_2 = 0, \ldots, 2^{-j} N,
\eeq
the angular support at scale $j$ associated with the shear parameters $s_L=-2^j$ and $s_H=2^j$ is
\begin{equation}\label{eq:defi_ell_U}
\begin{aligned}
\ell &= 2^{-j-1} N(s_L-1) + n_2, \quad n_2 = 2^{-j} \tfrac{N}{2}, \ldots, 2^{-j} N, &\mbox{for }s&=s_L,
\\\ell &= 2^{-j-1} N(s_H-1) + n_2, \quad n_2 = 0, \ldots, 2^{-j} \tfrac{N}{2},& \mbox{for }s&=s_H.
\end{aligned}
\end{equation}
%and, for $s_H=2^j$, it is
%\beq \label{eq:defi_ell_U}
%\ell = 2^{-j-1} N(s_H-1) + n_2, \quad n_2 = 0, \ldots, 2^{-j} \tfrac{N}{2}.
%\eeq
For the case  $j<0$, we simply let $s=0$ and $\ell=-N/2+n_2$ with $n_2=0,\ldots,N$. Also, in this case, the window
function $W(4^{-j}\ox)V(s+2^j\tfrac{\oy}{\ox})$ is slightly modifed to be $W(4^{-j}\ox)V_0(s+2^j\tfrac{\oy}{\ox})$ so that
the tight frame property still holds.

These computations allow us to determine the support size of $W(4^{-j}\omega_x) V(s+2^j\frac{\omega_y}{\omega_x})$ in terms
of pairs $(k,\ell)$. In fact, the number of sampling points in radial and angular direction affected by a window at scale
$j$ and shear $s$ are
\beq \label{eq:L1}
\cL^1_{j} =\left\{ \begin{array}{cll}
4^{j+1}\tfrac{R}{2} &:& j= j_L,\\[0.5ex]
4^{j-1} \cdot \frac{15 R}{2} + 1 & : & j_L<j<j_H,\\[0.5ex]
\tfrac{RN}{2}-4^{j-1}\tfrac{R}{2}+1 & : & j=j_H,
 \end{array} \right.
\eeq
and
\beq \label{eq:L2}
\cL^2_{j,s} =\left\{ \begin{array}{cll}
2^{-j} N + 1 & : & -2^j < s < 2^j\;\mbox{ with }\;j\ge0,\\
2^{-j} \frac{N}{2}+1 & : & s \in \{-2^j,2^j\}\;\mbox{ with }\; j\ge0,\\
N+1 & : & j<0,
 \end{array} \right.
\eeq
respectively.

Notice that the support of the continuum function $\xi \mapsto W(4^{-j}\xi_1)  V(s+2^j\frac{\xi_2}{\xi_1})$
is of approximate size $4^j \times 2^j$ obeying parabolic scaling. The situation is however different in the
digital realm. Since the sampling density in angular direction does change with growing radius -- the sampling
grid becomes in fact coarser --, parabolic scaling is not such directly mirrored in the relation between
$\cL^1_j$ and $\cL^2_{j,s}$.

Let us next carefully examine the exponential term, which can be written as
\[
e^{-2\pi i \ip{m}{S_s^T A_{4^{-j}} \omega}}
= e^{-2\pi i \ip{m}{(4^{-j}\omega_x,4^{-j}s\omega_x + 2^{-j} \omega_y)}}
= e^{-2\pi i \ip{m}{(4^{-j}\frac{2k}{R},4^{-j}s\frac{2k}{R} - 2^{-j} \frac{4\ell k}{RN})}}.
\]
We now adjust the exponential term as illustrated in Figure \ref{fig:adjustmentexp}, which
will be the only slight adaption we allow us to make when digitizing. The reason is to enable a
direct application of the inverse fast Fourier transform. The necessity for this modification
occurs because of two reasons:
\bitem
\item[1.] We cannot make the change of variables $\tau := S_s^T A_{4^{-j}} \omega$ in formula
\eqref{eeq:digitalshearletcoeff1Step}, which is the first step in the `continuous' proof for tightness,
due to the fact that the pseudo-polar grid is {\em not} invariant under the action of $S_s^T A_{4^{-j}}$.
\item[2.] The Fourier transform of a function defined on the pseudo-polar grid does {\em not} satisfy any
Plancherel equation.
\eitem
\begin{figure}[ht]
\begin{center}
\includegraphics[height=1.05in]{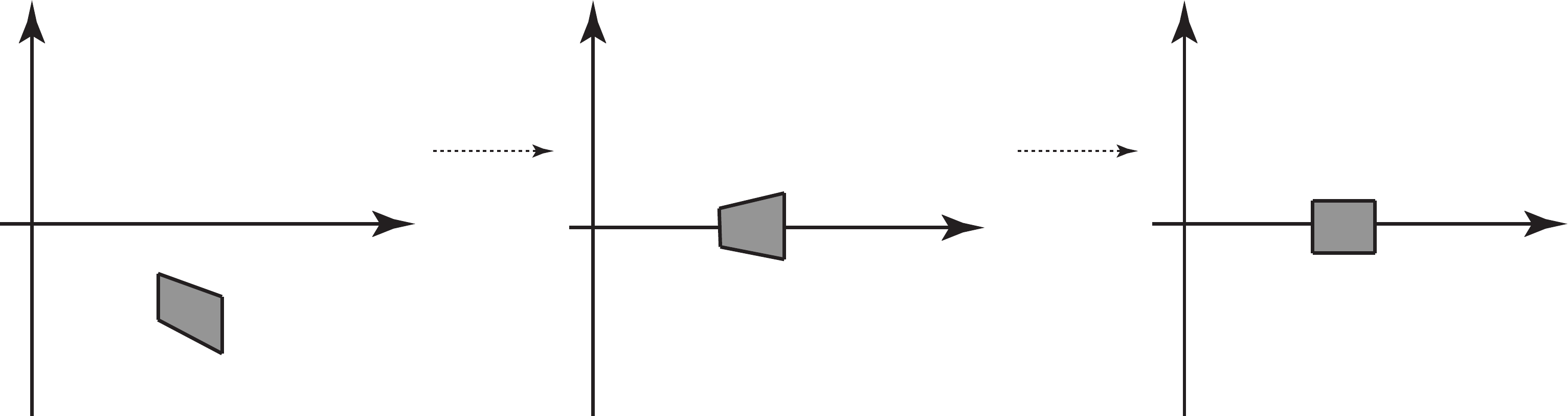}
\put(-212,52){$(S_s^T)^{-1}$}
\put(-90,52){$\theta$}
\end{center}
\caption{Adjustment of the exponential term through the map $\theta \circ (S_s^T)^{-1}$.}
\label{fig:adjustmentexp}
\end{figure}
Defining $\theta : \bR\setminus\{0\} \to \bR$ by $\theta(x,y) = (x,\tfrac{y}{x})$, we let the new exponential term
be
\[
e^{-2\pi i \ip{m}{(\theta \circ (S_s^T)^{-1})(4^{-j}\frac{2k}{R},4^{-j}s\frac{2k}{R} - 2^{-j} \frac{4\ell k}{RN})}}
= e^{-2\pi i \ip{m}{(4^{-j}\frac{2k}{R},-2^{j+1}\frac{\ell}{N})}}.
\]
%We could also take a different viewpoint for this digitization by considering the coefficients
%\begin{eqnarray*}
%\lefteqn{\int f(x,y) 2^{-j\frac{3}{4}} \overline{\hat{\psi}(S_s' D_{4^{-j}} (x,y))} e^{-2\pi i m'(S_s' D_{4^{-j}} (x,y))} dx dy}\\
%& = & \int f(D_{4^{-j}} S_k (x,y)) 2^{-j\frac{3}{4}} \overline{\hat{\psi}(x,y)} e^{-2\pi i  m'(x,y)} dx dy
%\end{eqnarray*}
%in the continuum setting. Discretizing this integral will also lead to a tight frame, which
%however does not give an explicit formula for digital shearlets, but merely provides a decomposition
%method.
This exponential term can be rewritten as
\[
e^{-2\pi i \ip{m}{(4^{-j}\frac{2k}{R},-2^{j+1}\frac{\ell}{N})}}
= e^{-2\pi i (\frac{m_1}{4}+(1-s)m_2)} e^{-2\pi  i \ip{m}{(4^{-j}\frac{2n_1}{R},-2^{j+1}\frac{n_2}{N})}},
\]
with $n_1$ and $n_2$ ranging over an appropriate set defined by \eqref{eq:defi_k}, \eqref{eq:defi_k_H_L},
and \eqref{eq:defi_ell}--\eqref{eq:defi_ell_U}. The reformulation -- recall the definitions of $\cL^1_j$ and $\cL^2_{j,s}$ in
\eqref{eq:L1} and \eqref{eq:L2}~--
\[
\exp\left(-2\pi i \ip{m}{\left(\tfrac{\cL^1_j 4^{-j}(2/R)}{\cL^1_j}n_1,\tfrac{-\cL^2_{j,s}2^{j+1}(1/N)}{\cL^2_{j,s}}n_2\right)}\right), \quad n_1, \; n_2,
\]
shows that we might regard the exponential terms as characters of a suitable locally compact abelian group (see \cite{HR63})
with annihilator identified with the rectangle
\[
\cR_{j,s} = \{((\cL^1_j)^{-1} 4^{j}\tfrac{R}{2} \cdot r_1, -(\cL^2_{j,s})^{-1}\tfrac{N}{2^{j+1}} \cdot r_2) : r_1 = 0 ,\ldots, \cL^1_j-1,\:
r_2 = 0, \ldots, \cL^2_{j,s} -1\}.
\]
For the low frequency square we further require the set
\[
\cR = \{(r_1, r_2) : r_1 = -1 ,\ldots, 1,\: r_2 = -\tfrac{N}{2}, \ldots, \tfrac{N}{2}\}.
\]
We are now ready to define digital shearlets defined on the pseudo-polar grid $\Omega_R$.
%The spatial domain picture can thus be derived by the inverse pseudo-polar Fourier transform.

\begin{definition} \label{defi:digitalshearlets}
Retaining the definitions and notations derived in this subsection and Subsection \ref{subsec:wPPFT}, on the cone $\Omega_R^{21}$
we define {\em digital shearlets} at scale $j \in  \{j_L, \ldots, j_H\}$, shear $s \in \{-2^j, \cdots, 2^j\}$, and
 spatial position $m \in \cR_{j,s}$ by
\begin{eqnarray*}
\sigma_{j,s,m}^{21}(\ox, \oy)
 =   \tfrac{C(\ox, \oy) }{\sqrt{|\cR_{j,s}|}} \, W(4^{-j} \ox) \, V^j(s+2^{j}\tfrac{\oy}{\ox})\chi_{\Omega_R^{21}}(\ox, \oy)\,
e^{2\pi i \ip{m}{(4^{-j}\ox,2^{j}\tfrac{\oy}{\ox})}},
\end{eqnarray*}
where $V^j = V$ for $j\ge0$ and $V^j=V_0$ for $j<0$, and
\[
C(\ox, \oy) = \left\{\begin{array}{cll}
1 & : &  (\ox, \oy) \not\in \cS_R^1 \cup \cS_R^2,\\[0.5ex]
\frac{1}{\sqrt{2}} & : & (\ox, \oy) \in (\cS_R^1 \cup \cS_R^2)\setminus \cC,\\[0.5ex]
\frac{1}{\sqrt{2(N+1)}} & : & (\ox, \oy) \in \cC.
\end{array}\right.
\]
The shearlets $\sigma_{j,s,m}^{11}, \sigma_{j,s,m}^{12}, \sigma_{j,s,m}^{22}$ on the
remaining cones are defined accordingly by symmetry with equal indexing sets for scale $j$, shear $s$, and spatial location $m$.
For $\iota_0=1,2$ and $n \in \cR$, we further define the functions
\[
\varphi_n^{\iota_0}(\ox, \oy)  =
\tfrac{C(\ox, \oy)}{\sqrt{|\cR|}} \hat{\phi}(\ox, \oy) \chi_{\Omega_R^{\iota_0}}(\ox, \oy)\, e^{2\pi i\ip{n}{(\frac{k}{3},\frac{\ell}{N+1})}}.
\]
%on $\cC_R$. Here the parameters $(k,\ell)$ which are now elements of $\cR$ due to support
%conditions have a different meaning. The parameter $k$ again labels the radial component
%as usual. The parameter $\ell$ now labels the angular component, but the whole annulus as
%ranging from $0$ to $4N-1$. Notice that in this parametrization $\cC$ occurs $4N$ times,
%which justifies the normalization factor.
Summarizing, we call the system
\begin{eqnarray*}
\cD\cS\cH& \hspace*{-0.05cm} = \hspace*{-0.05cm}& \{\varphi_n^{\iota_0} : \iota_0=1,2, n \in \cR\}
\cup\{\sigma_{j,s,m}^{\iota}: j \in \{j_L, \ldots, j_H\}, s\in \{-2^j, \cdots, 2^j\}, \\& &\hspace*{7cm}  m \in \cR_{j,s},\iota=11,12,21,22\}
\end{eqnarray*}
the {\em digital shearlet system}.
\end{definition}

The just defined digital shearlet system forms indeed a tight frame for functions $J : \Omega_R \to \CC$, as the following
results shows.

\begin{theorem}\label{thm:DSHtight}
The digital shearlet system $\cD\cS\cH$ defined in Definition \ref{defi:digitalshearlets}
forms a tight frame for functions $J : \Omega_R \to \CC$.
\end{theorem}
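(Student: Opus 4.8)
The plan is to verify the tight-frame identity $\sum_{\varphi \in \cD\cS\cH} |\langle J, \varphi\rangle_{\Omega_R}|^2 = \langle J, J\rangle_{\Omega_R}$ directly, by expanding the inner products and carrying out the sum over spatial positions $m$ (resp. $n$) first, then the sum over shears $s$, and finally the sum over scales $j$ together with the scaling part. The key structural point, established in the subsection preceding the theorem, is that the modified exponential terms are characters of a finite abelian group whose annihilator is exactly the lattice $\cR_{j,s}$ (resp.\ $\cR$); hence for each fixed $(j,s)$ the functions $e^{2\pi i\langle m,(4^{-j}\ox,2^j\oy/\ox)\rangle}$, $m\in\cR_{j,s}$, form (up to the normalization $1/\sqrt{|\cR_{j,s}|}$) an orthonormal basis of $\ell^2$ on the support block indexed by $(k,\ell)$ with $k,\ell$ ranging over the sets \eqref{eq:defi_k}--\eqref{eq:defi_ell_U}. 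This is the discrete Plancherel/Parseval identity for the finite group generated by those characters.

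First I would fix $j$ and $s$ and compute $\sum_{m\in\cR_{j,s}}|\langle J,\sigma_{j,s,m}^{21}\rangle_{\Omega_R}|^2$. Writing out $\langle J,\sigma_{j,s,m}^{21}\rangle_{\Omega_R}$ as a sum over $(\ox,\oy)\in\Omega_R^{21}$, the factor $\overline{C(\ox,\oy)}\,\overline{W(4^{-j}\ox)}\,\overline{V^j(s+2^j\oy/\ox)}\,\overline{\chi_{\Omega_R^{21}}}$ is independent of $m$, and the only $m$-dependence sits in the conjugated character. Summing $|\cdot|^2$ over $m\in\cR_{j,s}$ and invoking the character orthogonality on the $(k,\ell)$-block collapses the double sum to a single sum: $\sum_{m\in\cR_{j,s}}|\langle J,\sigma_{j,s,m}^{21}\rangle_{\Omega_R}|^2 = \sum_{(\ox,\oy)\in\Omega_R^{21}} C(\ox,\oy)^2\,|W(4^{-j}\ox)|^2\,|V^j(s+2^j\tfrac{\oy}{\ox})|^2\,|J(\ox,\oy)|^2$, because $|\cR_{j,s}|$ cancels the normalization against the number of surviving $m$. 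One has to check here that the support of $W(4^{-j}\cdot)V^j(\cdot)$ on $\Omega_R^{21}$ really is contained in one period of the relevant finite group, which is precisely what the support analysis leading to \eqref{eq:L1}--\eqref{eq:L2} guarantees — so no aliasing/wraparound occurs. The same computation applies verbatim to the scaling functions $\varphi_n^{\iota_0}$, using $\cR$ in place of $\cR_{j,s}$.

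Next I would sum over the shears $s\in\{-2^j,\dots,2^j\}$. For $j\ge 0$ the angular summability relation \eqref{eq:summabilityV2}, $\sum_{s=-2^j}^{2^j}|V(2^j\xi-s)|^2 = 1$ for $|\xi|\le 1$, together with the boundary splitting $V\mapsto$ half-mass at $s=\pm 2^j$ encoded in the $\cL^2_{j,s}$-cases and in the $C(\ox,\oy)=1/\sqrt2$ factor on the seam lines, makes $\sum_s C(\ox,\oy)^2|V^j(s+2^j\oy/\ox)|^2$ telescope to $C'(\ox,\oy)^2$ where the only surviving weight is the seam/center factor $C$; for $j<0$ the single term $s=0$ with $V_0\equiv 1$ on $[-1,1]$ does the same. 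Then summing over scales $j\in\{j_L,\dots,j_H\}$ and adding the scaling contribution $|W_0(4^{-j_L}\ox)|^2$, the radial summability relation \eqref{eq:summabilityW} yields $|W_0(4^{-j_L}\ox)|^2+\sum_j |W(4^{-j}\ox)|^2 = 1$ on $|\ox|\le N$, which is exactly the range of $\ox$ occurring in $\Omega_R$. What remains is $\sum_{(\ox,\oy)\in\Omega_R^{21}} C(\ox,\oy)^2|J(\ox,\oy)|^2$; adding the four cones $\Omega_R^{\iota}$ and the two $\varphi$-cones, the combinatorial multiplicities built into $C$ — namely $1/\sqrt2$ on each of the two seam lines (counted once in each of the two adjacent cones) and $1/\sqrt{2(N+1)}$ at the origin (which appears $N+1$ times in each half-grid) — conspire so that $\sum_{\iota}\sum_{(\ox,\oy)\in\Omega_R^{\iota}}C(\ox,\oy)^2|J(\ox,\oy)|^2 = \sum_{(\ox,\oy)\in\Omega_R}|J(\ox,\oy)|^2 = \langle J,J\rangle_{\Omega_R}$, which is the claim.

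The main obstacle, and the place where I expect to spend the most care, is the bookkeeping at the seam lines $\cS_R^1\cup\cS_R^2$, at the boundary $|k|=RN/2$, and at the center $\cC$: these are exactly the loci where the clean ``orthonormal basis of characters on a full period'' picture degenerates (the block is a half-period, or the point is multiply listed in $\Omega_R$), and it is there that the correction factors $C(\ox,\oy)$, the half-ranges in \eqref{eq:defi_k_H_L} and \eqref{eq:defi_ell_U}, and the boundary vanishing $W(\pm\tfrac14)=W(\pm4)=0$, $V(\pm1)=0$, $W_0(\pm1)=0$ from \eqref{eq:defW0}, \eqref{eq:supportW}, \eqref{eq:supportV} all have to be reconciled. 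I would handle the generic interior points first to expose the mechanism, then treat the seam lines, the boundary, and the center as separate cases, checking in each that the multiplicity with which the point is enumerated in $\Omega_R$ times $C(\ox,\oy)^2$ equals $1$.
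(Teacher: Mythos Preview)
Your plan is correct and follows essentially the same route as the paper's own proof: sum over spatial positions $m$ (resp.\ $n$) using the discrete Plancherel formula on the $\cR_{j,s}$-blocks, then over shears $s$ via \eqref{eq:summabilityV2}, then over scales $j$ together with the scaling part via \eqref{eq:summabilityW}, with the factor $C(\ox,\oy)$ absorbing the multiplicities at the seam lines and the center. One small remark: the identity \eqref{eq:summabilityV2} already gives $\sum_{s=-2^j}^{2^j}|V(s+2^j\oy/\ox)|^2=1$ on the full cone $|\oy/\ox|\le 1$, so no separate ``half-mass'' accounting for $s=\pm 2^j$ is needed in the shear sum; the $C=1/\sqrt{2}$ factor compensates only for the duplication of seam-line points between $\Omega_R^1$ and $\Omega_R^2$ when the four cones are added together, exactly as you describe in your last step.
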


\begin{proof}
Letting $J : \Omega_R \to \CC$, we claim that
\begin{equation}\label{eq:RHSLHS}
\langle J,J\rangle_{\Omega_R}=\sum_{\iota_0,n}|\langle J,\varphi_n^{\iota_0} \rangle_{\Omega_R}|^2
+\sum_{\iota,j,s,m}|\langle J,\sigma_{j,s,m}^{\iota} \rangle_{\Omega_R}|^2
\end{equation}
which proves the result.

We start by analyzing the first term on the RHS of \eqref{eq:RHSLHS}. Let $\iota_0 \in \{1, 2\}$ and
$J_C:\Omega_R\rightarrow\bC$ be defined by $J_C(\ox,\oy):=C(\ox,\oy)\cdot J(\ox,\oy)$ for $(\ox,\oy)\in\Omega_R$.
Using the support conditions of $\hat\phi$,
\begin{eqnarray} \nonumber
\hspace*{-0.5cm} &&\sum_n|\langle J,\varphi_n^{\iota_0}\rangle_{\Omega_R}|^2=\sum_n\Big|\sum_{(\ox,\oy) \in \Omega_R^{\iota_0}} J(\ox,\oy) \overline{\varphi_n^{\iota_0}(\ox,\oy)}\Big|^2\\  \nonumber
& = & \frac{1}{|\cR|} \sum_n \Big|\sum_{(\ox,\oy) \in \Omega_R^{\iota_0}}   J_C(\ox,\oy)\cdot
\hat{\phi}(\ox, \oy) \cdot e^{-2\pi i\ip{n}{(\frac{k}{3},\frac{\ell}{N+1})}}\Big|^2\\
\label{eq:last1}
& = & \frac{1}{|\cR|} \sum_n \Big|\sum_{k=-1}^{1} \sum_{\ell=-N/2}^{N/2}
 J_C(\ox,\oy)\cdot \hat{\phi}(\ox, \oy) \cdot e^{-2\pi i\ip{n}{(\frac{k}{3},\frac{\ell}{N+1})}}\Big|^2.
\end{eqnarray}
The choice of $\cR$ now allows us to use the Plancherel formula. Exploiting again support properties, we conclude from
\eqref{eq:last1} that
\[
\sum_n|\langle J,\varphi_n^{\iota_0}\rangle_{\Omega_R}|^2=\sum_{(\ox,\oy) \in \Omega_R^{\iota_0}} |C(\ox,\oy) \cdot J(\ox,\oy)|^2 \cdot |\hat{\phi}(\ox, \oy)|^2.
\]
Combining $\iota_0=1,2$ and using \eqref{eq:defW0}, we proved
\beq \label{eq:firstterm}
\sum_{\iota_0}\sum_n|\langle J,\varphi_n^{\iota_0}\rangle_{\Omega_R}|^2=
%\sum_{\iota_0=1,2} \sum_n \Big|\sum_{(\ox,\oy) \in \Omega_R^{\iota_0}} J(\ox,\oy) \overline{\varphi_n^\iota(\ox,\oy)}\Big|^2
\sum_{(\ox,\oy) \in \Omega_R} |J(\ox,\oy)|^2 \cdot |W_0(\ox)|^2.
\eeq

Next we study the second term on the RHS in \eqref{eq:RHSLHS}. By symmetry, it suffices to consider the case $\iota=21$.
By the support conditions on $W$ and $V$ (see \eqref{eq:supportW} and \eqref{eq:supportV}),
{\allowdisplaybreaks
\begin{eqnarray} \nonumber
&&\sum_{j,s,m}|\langle J,\sigma_{j,s,m}^{21}\rangle_{\Omega_R}|^2=
\sum_{j,s} \sum_{m \in \cR_{j,s}} \Big|\sum_{(\ox,\oy) \in \Omega_R^{21}} J(\ox,\oy)
\overline{\sigma_{j,s,m}^{21}(\ox,\oy)}\Big|^2\\  \nonumber
& = & \sum_{j,s} \frac{1}{|\cR_{j,s}|} \sum_{m \in \cR_{j,s}} \Big|\sum_{(\ox,\oy) \in \Omega_R^{21}} J_C(\ox,\oy) \cdot \overline{W(4^{-j} \ox)}
\\  \nonumber & & \cdot
 \overline{V^j(s+2^{j}\tfrac{\oy}{\ox})} \cdot e^{-2\pi i\ip{m}{(4^{-j}\ox,2^{j}\frac{\oy}{\ox})}}\Big|^2\\  \nonumber
& = & \sum_{j,s} \frac{1}{|\cR_{j,s}|} \sum_{m \in \cR_{j,s}} \Big|\sum_{k=4^{j-1} (R/2)}^{4^{j+1} (R/2)} \sum_{\ell = 2^{-j-1}N(s-1)}^{2^{-j-1}N(s+1)}
 J_C(\ox,\oy)  \\ \label{eq:last2}
& &  \cdot  \overline{W(4^{-j} \ox)}\cdot \overline{V^j(s+2^{j}\tfrac{\oy}{\ox})} \cdot e^{-2\pi i\ip{m}{(4^{-j}\frac{2k}{R},-2^{j+1}\frac{\ell}{N})}}\Big|^2.
\end{eqnarray}
}
Similarly as before, the choice of $\cR_{j,s}$ does allow us to use the Plancherel formula. Hence \eqref{eq:last2} equals
\[
\sum_{j,s,m}|\langle J,\sigma_{j,s,m}^{21}\rangle_{\Omega_R}|^2=
\sum_{j,s} \sum_{(\ox,\oy) \in \Omega_R^{21}} \Big| J_C(\ox,\oy)\cdot\overline{W(4^{-j} \ox)V^j(s+2^{j}\tfrac{\oy}{\ox})}\Big|^2.
\]
Next we use \eqref{eq:summabilityV2} to obtain
\begin{eqnarray} \nonumber
\lefteqn{\sum_{j,s} \sum_{(\ox,\oy) \in \Omega_R^{21}} \Big| J_C(\ox,\oy) \cdot\overline{W(4^{-j} \ox)} \cdot\overline{V^j(s+2^{j}\tfrac{\oy}{\ox})}\Big|^2}\\ \nonumber
& = & \hspace*{-0.25cm} \sum_{(\ox,\oy) \in \Omega_R^{21}} | J_C(\ox,\oy)|^2 \sum_{j =j_L}^{j_H}
 |W(4^{-j} \ox)|^2 \cdot  \sum_{s=-2^j}^{2^j} |V^j(s+2^{j}\tfrac{\oy}{\ox})|^2 \\ \nonumber
& = & \hspace*{-0.25cm} \sum_{(\ox,\oy) \in \Omega_R^{21}} | J_C(\ox,\oy)|^2 \sum_{j = j_L}^{j_H}
 |W(4^{-j} \ox)|^2.
\end{eqnarray}
Hence the second term on the RHS in \eqref{eq:RHSLHS} equals
\beq \label{eq:secondterm}
\sum_{\iota}\sum_{j,s,m}|\langle J,\sigma_{j,s,m}^{\iota}\rangle_{\Omega_R}|^2=\sum_{(\ox,\oy) \in \Omega_R} |J(\ox,\oy)|^2\cdot \sum_{j =j_L}^{j_H}.
 |W(4^{-j} \ox)|^2.
\eeq
Finally, our claim \eqref{eq:RHSLHS} follows from combining \eqref{eq:firstterm}, \eqref{eq:secondterm}, and \eqref{eq:summabilityW}.
\end{proof}

%***********************************************************************************
%***********************************************************************************

\section{Inverse FDST for Finite Data}
\label{sec:ISTfinitedata}

In this section, we will analyze the inverse of the FDST. For this, let $P$ and $w$ denote
the operators for the pseudo-polar Fourier transform and the weighting defined as before. By slight abuse of notation,
we will further let $W$ denote the windowing with respect to the digital shearlets, i.e.,
for $J:\Omega_R\rightarrow\bC$,
\[
WJ:=\{\langle J,\varphi_n^{\iota_0}\rangle_{\Omega_R} : \iota_0,n\}\cup\{\langle J,\sigma_{j,s,m}^{\iota}\rangle_{\Omega_R} : \iota, j,s,m\}
\]
with $\varphi^{\iota_0}_n$ and $\sigma_{j,s,m}^\iota$ being the digital shearlets (see Definition~\ref{defi:digitalshearlets}) for
$j=j_L$, $\ldots$, $j_H$, $s \in \{-2^j, \cdots, 2^j\}$, and $n\in\cR, m\in \cR_{j,s}$. Then the FDST, which we abbreviate by $S$, takes the form
\beq \label{eq:formS}
S = W\sqrt{w}P.
\eeq
To be more precise, letting $I$ be an image of size $N\times N$ and $J_w:=\sqrt{w}PI$ be the weighted pseudo-polar Fourier transform of $I$,
the set of shearlet coefficients of $I$ generated by the FDST can be written as
\begin{equation}\label{def:SHX}
SI = \{c^{\iota_0}_n: \iota_0, n\}\cup\{c^{\iota}_{j,s,m} : \iota, j, s, m\},
\end{equation}
where $c^{\iota_0}_n = \langle J_w, \varphi^{\iota_0}_n\rangle_{\Omega_R}$ for $\iota_0=0,1$ and
$c^{\iota}_{j,s,m} = \langle J_w, \sigma^{\iota}_{j,s,m}\rangle_{\Omega_R}$ for $\iota=11,12,21,22$.

Aiming to derive a closed form of $S^{-1}$, let $P^\star$ denote the adjoint operator of $P$, which, for a function $J:\Omega_R\rightarrow \bC$,
is given by
\[
P^\star J(u,v) = \sum_{(\ox,\oy)\in\Omega_R}J(\ox,\oy)e^{\frac{2\pi i}{m_0}(u\ox+v\oy)},\quad u,v=-\tfrac{N}{2},\ldots,\tfrac{N}{2}-1.
\]
Further, let $W^\star$ denote the adjoint operator of $W$, which, given a sequence of shearlet coefficients $C_I=SI$ as in \eqref{def:SHX},
is defined by
\[
W^\star C_I=\sum_{\iota_0,  n\in \cR}c_n^{\iota_0}\varphi_n^{\iota_0}
+ \sum_{\iota, j,s,m}c_{j,s,m}^\iota\sigma_{j,s,m}^\iota.
\]
Then we have the following result, which shows that the inverse FDST equals the adjoint FDST
provided the weight function $w$ satisfies \eqref{eq:weightcondition}.

\begin{proposition}
If  $w$ satisfies \eqref{eq:weightcondition}, then $S^{-1} = P^\star\sqrt{w}W^\star$.
\end{proposition}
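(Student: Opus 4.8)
The claim $S^{-1} = P^\star\sqrt{w}W^\star$ unpacks into two independent facts, which I would establish separately and then compose. First, the windowing operator $W$ is an isometry onto its range: by Theorem~\ref{thm:DSHtight} the digital shearlet system $\cD\cS\cH$ is a tight frame for functions $J:\Omega_R\to\CC$, which by the frame reconstruction formula recalled just before Definition~\ref{defi:digitalshearlets} means precisely that $W^\star W = \mathrm{Id}$ on $\{J:\Omega_R\to\CC\}$. Second, the weighted pseudo-polar Fourier transform $\sqrt{w}P$ is an isometry from images onto its range: this is Theorem~\ref{theo:weightedisometry}, whose conclusion $\sum_{u,v}|I(u,v)|^2 = \sum_{(\ox,\oy)\in\Omega_R} w(\ox,\oy)|\hat I(\ox,\oy)|^2$ under hypothesis \eqref{eq:weightcondition} says exactly that $(\sqrt{w}P)^\star(\sqrt{w}P) = P^\star w P = \mathrm{Id}$ on the space of $N\times N$ images.

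**Composition.** With these two ingredients the computation is routine. Using the factorization $S = W\sqrt{w}P$ from \eqref{eq:formS}, one has $S^\star = (W\sqrt{w}P)^\star = P^\star\sqrt{w}W^\star$, where I use that $\sqrt{w}$ is a real diagonal (hence self-adjoint) operator. Then
\[
P^\star\sqrt{w}W^\star \cdot S \;=\; P^\star\sqrt{w}\,(W^\star W)\,\sqrt{w}P \;=\; P^\star\sqrt{w}\sqrt{w}P \;=\; P^\star w P \;=\; \mathrm{Id},
\]
where the second equality uses $W^\star W = \mathrm{Id}$ and the last uses Theorem~\ref{theo:weightedisometry}. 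Since $S$ is a linear map between finite-dimensional spaces and we have exhibited a left inverse, and since $S$ is injective (it is an isometry onto its range, being the composition of the two isometries above), $S$ is invertible on its range with $S^{-1} = P^\star\sqrt{w}W^\star$ there; extended to all coefficient sequences this is the stated formula. One should remark that the identity $S^{-1}=S^\star$ is the point: the inverse transform is computed by merely taking adjoints step by step, as promised in the introduction.

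**Main obstacle.** There is no genuine analytic difficulty here—both hard theorems (the tight frame property and the weighted isometry) are already in hand, and what remains is bookkeeping with adjoints. The one point requiring a little care is the precise sense in which $W^\star W=\mathrm{Id}$: one must note that the tight frame bound in Theorem~\ref{thm:DSHtight} is normalized to $1$ (a \emph{Parseval} frame), so that the synthesis operator $W^\star$ composed with the analysis operator $W$ is genuinely the identity and not merely a positive multiple of it; otherwise an extra constant would appear. A second, purely notational, caveat is the slight abuse whereby $W$ denotes the windowing operator here while the same letter denoted a Meyer window earlier—this should be flagged to the reader but causes no mathematical issue. Everything else is a one-line composition.
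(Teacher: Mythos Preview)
Your proof is correct and follows essentially the same approach as the paper: both combine $W^\star W = \mathrm{Id}$ (from Theorem~\ref{thm:DSHtight}) with $P^\star w P = \mathrm{Id}$ (from the hypothesis via Theorem~\ref{theo:weightedisometry}) and the factorization $S = W\sqrt{w}P$ to conclude. The paper's version is simply terser, omitting the explicit computation $P^\star\sqrt{w}W^\star W\sqrt{w}P = P^\star w P = \mathrm{Id}$ and your remarks on finite-dimensionality and the Parseval normalization, which are nonetheless reasonable points to make explicit.
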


\begin{proof}
By hypothesis on $w$, we have $P^\star wP =  {\Id}$. Moreover, $W^\star W = {\Id}$ by Theorem~\ref{thm:DSHtight}.
Since $S = W\sqrt{w}P$ by \eqref{eq:formS}, the result is proved.
\end{proof}

Let us now assume that the weight function $w$ does not fulfill the requirements in \eqref{eq:weightcondition}, i.e.,
$P^\star wP \neq {\Id}$, which prevents simply using the adjoint operator. In this case, we mention two methods to
compute the inverse. The first method is a direct approach by resampling trigonometric polynomials from the pseudo-polar
grid to the Cartesian grid, which is discussed in detail in \cite{ACDIS08}. The second method is an iterative approach
by using the conjugate gradient method, which we now describe.

Suppose we are given a sequence of shearlet coefficients $C_I$ of some image $I$. By Theorem~\ref{thm:DSHtight}, the
inverse digital shearlet windowing of $I$ is $\sqrt{w}PI = W^\star C_I =: J_w$. Consequently, the original image $I$
can be computed by solving
\[
\min_{I\in \bR^{N\times N}} \|\sqrt{w}P I -J_w\|_2.
\]
Solving this problem is equivalent to solving the linear system of equations
\beq \label{eq:CGProblem}
P^\star w P I = P^\star \sqrt{w} J_w.
\eeq
This shows that $w$ plays the role of a preconditioner for the following normal equations:
\[
P^\star P I = P^\star  J_w.
\]
We refer to Subsection~\ref{subsec:weights}
%and \ref{subsec:condition}
for a selection of choices for weight functions
and a discussion about their performance as preconditioners. Since the matrix corresponding to $P^\star w P$ is symmetric
and positive definite, the conjugate gradient method can be used to solve \eqref{eq:CGProblem}. Algorithmic details
will be discussed in Section~\ref{sec:implementation}. Let us just mention two main issues: The number of iterations
required by the conjugate gradient method depends on the condition number of $P^\star wP$.
%, and we presented a discussion of this issue in Subsection \ref{subsec:condition}.
Moreover, the conjugate gradient method only requires
applications of $P^\star$ and $P$ to a vector, which can be computed $O(N^2\log N)$ flops, in contrast to forming the
complete matrices.

%***********************************************************************************

\section{Mathematical Properties of the FDST}
\label{sec:props}

Results on decay properties of the discrete shearlet coefficients and even more its sparse approximation properties are
well-known, see \cite{GL07a,DK08b,KL10}. These continuum domain results do however not directly imply similar statements
for the introduced digital shearlets due to the fundamentally different nature of a digital grid. Therefore, in this
section, we will analyze decay properties of the digital shearlet coefficients for linear singularities. Moreover, we
will prove shear invariance of the FDST.

\subsection{Decay Properties of FDST Coefficients for Linear Singularities}
\label{subsec:line}

One main advantage of shearlets over wavelets is their ability to precisely resolve curvilinear -- hence, in particular, linear --
singularities due to their anisotropic shape and their additional direction-sensitive shear parameter \cite{KL09}. Our computations
will show that this property carries over to the digital setting. Our model for a linear singularity will be a line. We remark
that similar results can be shown for a Heaviside functions as a model.

Let $I$ be an image of size $N\times N$ with an edge through the origin of slope $t$ satisfying $|t|<1$, i.e.,
$I(u,v) = \delta({tu-v})$, $-N/2\le u,v\le N/2-1$.
\begin{figure}[ht]
\begin{center}
\includegraphics[height=1in]{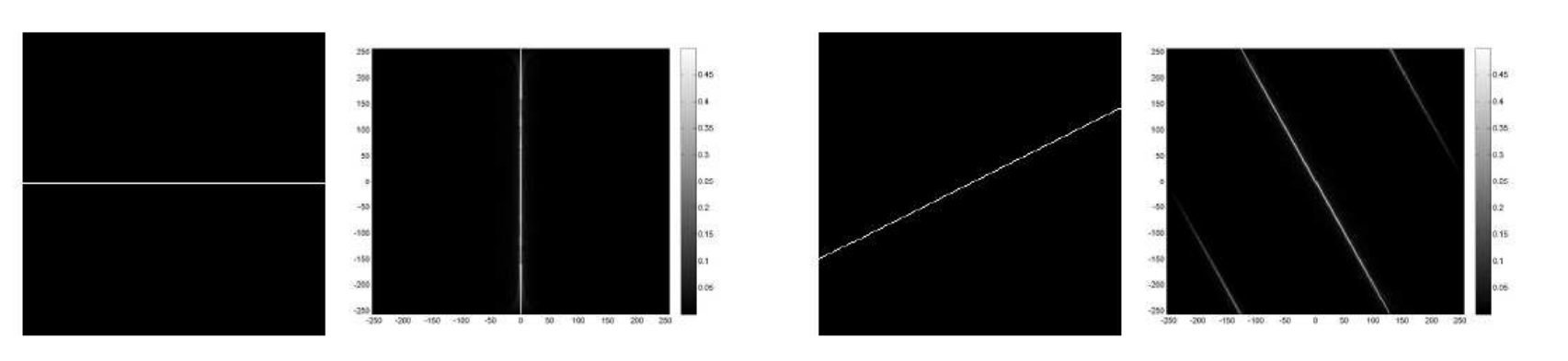}
%\includegraphics[height=1in]{geoExact7t}
%\includegraphics[height=1in]{geoExact7f}
%\hspace*{0.5cm}
%\includegraphics[height=1in]{geoExact6t}
%\includegraphics[height=1in]{geoExact6f}
\end{center}
\caption{From left to right: A horizontal line in spatial domain and its pseudo-polar Fourier transform, and
a line with slope $< 1$ in spatial domain  and its pseudo-polar Fourier transform.}
\label{fig:Line}
\end{figure}
Figure~\ref{fig:Line} indicates that for $t \neq 0$ an aliasing effect occurs. This effect is quite small in comparison
with the intensity of the line, wherefore we will ignore it in the calculations. Also, by symmetry, we can assume $t\ge0$ without loss of generality.
%Hence
%\[
%\hat I(\ox,\oy) = \sum_{u} I(u,tu)e^{-\frac{2\pi i}{m_0}(u\ox+tu\oy)} = \sum_{u=-N/2}^{N/2-1}e^{-\frac{2\pi i}{m_0}(u(\ox+t\oy))}
%\]
Setting $m_0 = \frac{2}{R}(RN+1)$, for  $(\ox,\oy) = (\frac{2k}{R},-\tfrac{2k}{R}\cdot\frac{2\ell}{N})\in\Omega_R^2$, we have
\[
\hat I(\ox,\oy)
= \sum_{u=-N/2}^{N/2-1} I(u,tu)e^{-\frac{2\pi i}{m_0}(u\ox+tu\oy)}
=\sum_{u=-N/2}^{N/2-1}e^{-\frac{2\pi i}{RN+1}u\cdot k\cdot(1-\frac{2\ell}{N}t)}.
\]
If $k=0$, we obtain $\hat I(\ox,\oy) = N$. If $k \neq 0$,
\[
\hat I(\ox,\oy) = e^{\tfrac{\pi i k}{RN+1}(1-\frac{2\ell}{N}t)} \cdot\frac{\sin(\pi N k(1-\frac{2\ell}{N}t)/(RN+1))}{\sin(\pi k(1-\frac{2\ell}{N}t)/(RN+1))}.
\]
Concluding,
\[
\begin{aligned}
\hat I(\ox,\oy) &= N, &\mbox{ for } k&=0\\
|\hat I(\ox,\oy)|&\le \frac{RN+1}{2|k||1-\frac{2\ell}{N}t|}, &\mbox{ for } |k| &= 1, \ldots, \tfrac{RN}{2}.
\end{aligned}
\]
Similarly, for $(\ox,\oy) = (-\tfrac{2k}{R}\cdot\frac{2\ell}{N},\frac{2k}{R})\in\Omega_R^1$,
%$\hat I(\ox,\oy) =\sum_{u=-N/2}^{N/2-1}e^{-\frac{2\pi i}{RN+1}u\cdot k\cdot(t-\frac{\ell}{N/2})}$. Hence
\[
\begin{aligned}
\hat I(\ox,\oy) &= N,& \mbox{ for } \ell&=t\cdot \tfrac{N}{2}\\
\qquad |\hat I(\ox,\oy)|&\le \frac{(RN+1)N}{4|k||\ell-\frac{N}{2}t|},& \mbox{ for }|k| &= 1, \ldots, \tfrac{RN}{2}, \: \ell \neq t\cdot \tfrac{N}{2}.
\end{aligned}
\]

We now distinguish two cases. On $\Omega_R^{21}$ (similarly on $\Omega_R^{22}$), for those digital shearlets {\em not} on the seam
lines, i.e., $s \not\in \{-2^j,2^j\}$, we obtain
{\allowdisplaybreaks
\begin{eqnarray}\nonumber
|\langle\hat{I},\sigma^{21}_{j,s,m}\rangle_{\Omega_R}|
%&=&\frac{1}{\sqrt{|\cR_{j,s}|}}\Bigg|\sum_{(\ox,\oy)\in\Omega_R^{21}}\hat{I}(\ox,\oy)\overline{W(4^{-j}\ox)}\overline{V^j(s+2^j\tfrac{\oy}{\ox})}
%e^{-2\pi i \ip{m}{(4^{-j}\ox,2^{-j}\tfrac{\oy}{\ox})}}\Bigg|\\
&\le&\frac{1}{\sqrt{|\cR_{j,s}|}}\sum_{(\ox,\oy)\in\Sp\sigma_{j,s,0}^{21}}|\hat{I}(\ox,\oy)|\\ \nonumber
&\le&\frac{1}{\sqrt{|\cR_{j,s}|}}\sum_{\ell = 2^{-j-1}N(s-1)}^{ 2^{-j-1}N(s+1)}\frac{N/2}{|N/2-t\ell|}\sum_{k=4^{j-1}R/2}^{4^{j+1}R/2}\frac{RN+1}{2|k|}\\ \nonumber
&=&\frac{(RN+1)N}{4\sqrt{|\cR_{j,s}|}}\sum_{\ell = 2^{-j-1}N(s-1)}^{ 2^{-j-1}N(s+1)}\frac{1}{|N/2-t\ell |}\sum_{k=4^{j-1}R/2}^{4^{j+1}R/2}\frac{1}{|k|}\\ \nonumber
&\le&\frac{(RN+1)N}{4\sqrt{|\cR_{j,s}|}}\sum_{\ell = 2^{-j-1}N(s-1)}^{ 2^{-j-1}N(s+1)}\frac{1}{N/2(1-2^{-j}(s+1)t)} \sum_{k=4^{j-1}R/2}^{4^{j+1}R/2}\frac{1}{|k|}\\ \label{eq:decay1}
&\le&\frac{(RN+1)N}{4\sqrt{|\cR_{j,s}|}}\cdot\frac{1}{2^j(1-t)}\cdot 4\log(2).
\end{eqnarray}
}
On $\Omega_R^{11}$ (similarly on $\Omega_R^{12}$),
{\allowdisplaybreaks
\begin{eqnarray}\nonumber
|\langle\hat{I},\sigma^{11}_{j,s,m}\rangle_{\Omega_R}|
%&=&\frac{1}{\sqrt{|\cR_{j,s}|}}\Bigg|\sum_{(\ox,\oy)\in\Omega_R^{11}}\hat{I}(\ox,\oy)\overline{W(4^{-j}\ox)V^j(s+2^j\tfrac{\oy}{\ox})}
%e^{-2\pi i \ip{m}{(4^{-j}\ox,2^{-j}\tfrac{\oy}{\ox})}}\Bigg|\\
&\le&\frac{1}{\sqrt{|\cR_{j,s}|}}\sum_{(\ox,\oy)\in\Sp\sigma_{j,s,0}^{11}}|\hat{I}(\ox,\oy)|\\ \nonumber
&\le&\frac{1}{\sqrt{|\cR_{j,s}|}}\sum_{\ell = 2^{-j-1}N(s-1)}^{ 2^{-j-1}N(s+1)}\sum_{k=4^{j-1}R/2}^{4^{j+1}R/2}
\Big(\delta(s-t\cdot \tfrac N2)\delta(\ell-t\cdot \tfrac N2) N\\ \nonumber
&&\hspace*{2cm}+(1-\delta(s-t\cdot \tfrac N2)\delta(\ell-t\cdot N/2))\frac{(RN+1)N}{4|k||\ell-t\cdot \tfrac N2|}\Big)\\ \nonumber
&\le& \frac{(RN+1)N}{4\sqrt{|\cR_{j,s}|}}
\sum_{\ell = 2^{-j-1}N(s-1),\ell\neq t\cdot \tfrac N2}^{ 2^{-j-1}N(s+1)}\frac{1}{|\ell-t\cdot N/2|}\sum_{k=4^{j-1}R/2}^{4^{j+1}\cdot \tfrac{R}{2}+1}\frac{1}{|k|}\\ \nonumber
&&\hspace*{3.5cm}+\frac{\delta(s-t\cdot \tfrac N2)\cdot (4^{j+1}\cdot \tfrac{R}{2}+1)N}{\sqrt{|\cR_{j,s}|}}\\ \label{eq:decay2}
&\le&\frac{(RN+1)N\log(N/2)4\log(2)}{4\sqrt{|\cR_{j,s}|}}+\frac{\delta(s-t\cdot \tfrac N2)(4^{j+1}\cdot \tfrac R2+1)N}{\sqrt{|\cR_{j,s}|}}.
\end{eqnarray}
}
Digital shearlets on the seam lines can be dealt with similarly. Thus, by \eqref{eq:decay1} and \eqref{eq:decay2}, we have
upper bounds for shearlet coefficients which as $j \to \infty$ have asymptotic decay
\bitem
\item $O(2^{-3j/2})$, if the shearlet is in $\Omega_R^{2 \cdot }$ and is not aligned with the line singularity,
\item $O(2^{-j/2})$, if the shearlet is in $\Omega_R^{1 \cdot }$ and is not aligned with the line singularity,
\item $O(2^{3j/2})$, if the shearlet is in $\Omega_R^{1 \cdot }$ and is aligned with the line singularity.
\eitem

\bigskip

We now aim to show that line singularities can indeed be detected in the transform domain. For this, we will show a
lower bound on the decay of the shearlet coefficients if the shearlet is aligned with the line singularity.
To avoid unnecessary technicalities, we now assume $\hat{I}(\ox,\oy) = N$
for $\oy/\ox=-1/t$ and $\hat{I}(\ox,\oy)=0$ for $\oy/\ox\neq -1/t$ with $0<t<1$. The case for $t\le0$ can be handled similarly.
Notice that this condition merely assumes that the aliasing effects are negligible for the decay analysis. We further assume
that the window function $W$ and the bump function $V$ are positive on their supports. On $\Omega_R^{11}$,
{\allowdisplaybreaks
\begin{eqnarray*}
\max_{j,s,m}\{|\langle \hat{I}, \sigma^{11}_{j,s,m}\rangle_{\Omega_R} |\}
&\ge& |\langle \hat{I}, \sigma^{11}_{j,s,0}\rangle_{\Omega_R} |\\
&=&\left| \frac{1}{{\sqrt{|\cR_{j,s}|}}} \sum_{(\ox,\oy)\in\Omega_R^{11}}\hat{I}(\ox,\oy)\overline{W(4^{-j}\ox)V^j(s+2^j\tfrac{\oy}{\ox})}\right|\\
&=&\left| \frac{1}{{\sqrt{|\cR_{j,s}|}}} \sum_{(\ox,\oy)\in\Omega_R^{21}}\delta(s-t\cdot\tfrac{N}{2})\cdot N\cdot\overline{W(4^{-j}\ox)V^j(s+2^j\tfrac{\oy}{\ox})}\right|\\
&=&\frac{\delta(s-t\cdot\tfrac{N}{2})\cdot N}{{\sqrt{|\cR_{j,s}|}}} S_{j,s},
\end{eqnarray*}
}
where $S_{j,s}=|\sum_{(\ox,\oy)\in\Omega_R}{W(4^{-j}\ox)V(s+2^j\frac{\oy}{\ox})}|$ denotes the area of the window function with respect to $j$ and $s$.
Since $S_{j,s}$ is approximately the same order as $|\cR_{j,s}|$, we conclude that
\[
\max_{j,m}\{|\langle \hat{I}, \sigma^{11}_{j,tN/2,m}\rangle_{\Omega_R} |\} \ge \frac{N\cdot S_{j,s}}{{\sqrt{|\cR_{j,s}|}}}
=O(2^{j/2}) \quad \mbox{as } j \to \infty.
\]
This estimate combined with \eqref{eq:decay2} shows that the asymptotic decay as $j \to \infty$ in $\Omega_R^{1 \cdot }$ is
\bitem
\item $O(2^{-j/2})$, if the shearlet is not aligned with the line singularity,
\item $\Omega(2^{j/2})$, if the shearlet is aligned with the line singularity.
\eitem

In this sense there is a strong difference between the decay rates of shearlet coefficients  between those aligned with the line singularity and
those {\em not} aligned with  the line singularity.

Experimental results strongly support this analysis. Table \ref{tab:4} presents the maximal absolute values of shearlet coefficients of an
$512\times 512$-image with a horizontal line ($t=0$) and different scales, both aligned with the line singularity ($c_{max}^0$), and
not aligned with the line singularity ($c_{max}^1$). The data in Table \ref{tab:4} does clearly indicate a significant difference of decay
rates between these two classes. We shall confirm this behavior from a different viewpoint, more precisely, a particular quantitative measure,
in Subsection~\ref{subsec:geometry}.
\begin{table}[ht]
\label{tab:4}
\caption{Maximal coefficients of a horizontal line}
\begin{tabular}{p{1.2cm}p{1.2cm}p{1.2cm}p{1.2cm}p{1.2cm}p{1.2cm}p{1.2cm}p{1.2cm}}
\toprule
$j$ & -1 & 0 & 1 & 2 & 3 & 4 &5\\
\midrule
$c_{max}^0$ & 0.160 & 0.098 & 0.068 & 0.038 &0.018 & 0.013 & 2.6E-3\\
\hline
$c_{max}^1$ & 0.149 & 0.048 & 0.007 & 1.9E-3 & 4.8E-4 & 2.3E-4 & 2.4E-5\\
\bottomrule
\end{tabular}
\end{table}

\subsection{Shear Invariance of the FDST}
\label{subsec:shearinvariance}

Since the shearing operator is quite distinctive in the definition of shearlets, one might ask whether
the FDST is in fact even shear invariant. In the continuum setting, when choosing $\psi$ to be a shearlet
generator as defined in \eqref{eq:psidef}, one can easily verify that, for any $f \in L^2(\mathbb{R}^2)$,
\[
\langle 2^{3j/2}\psi(S_k^{-1}A_{4^j}\cdot-m),f(S_s\cdot) \rangle = \langle 2^{3j/2}\psi(S_{k+2^{j}s}^{-1}A_{4^j}\cdot-m),f \rangle.
\]
This identity can be viewed as a manifestation of shear invariance, since it states that the shearlet coefficient of a sheared
image $f(S_s\cdot)$ at scale $j$, shear $k$, and spatial position $m$ equals the shearlet coefficient for the original image
$f$ at the same scale $j$, same spatial position $m$, but with a shear parameter shifted by $2^js$.

The following results shows that the FDST associated with digital shearlets has a similar property.

\begin{proposition}\label{prop:shearInvariance}
Let $I$ be an $N\times N$ image . Let $j$ be a scale, $s$ be a shear, $t$ with $|t|<1$ be a slope,
and $m=(m_1,m_2)$ be a spatial position such that $2^jt\in\bZ$ and $-2^j<s,s+2^jt<2^j$. Let $I_t:=I(S_t\cdot)$ be the sheared
image of $I$ such that ${\hat{I}_t}(\ox,\oy) =\hat{I}(\ox,\oy-t\ox)$ for all  $(\ox,\oy)\in\Sp\sigma_{j,s,m}^{\iota}$.
Then
\[
\langle \hat{I}_t,\sigma^{\iota}_{j,s,m}\rangle_{\Omega_R} = \langle \hat{I},\sigma^{\iota}_{j,s+2^jt,m}\rangle_{\Omega_R} \cdot e^{-2\pi i m_2\cdot2^jt}.
\]
% and
%\[
%\langle \hat{I},\sigma^{\iota}_{j,s,m}\rangle:=\sum_{(\ox,\oy)\in\Omega_R}\hat{I}(\ox,\oy)\sigma^{\iota}_{j,s,m}(\ox,\oy).
%\]
\end{proposition}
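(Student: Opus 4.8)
The plan is to compute the inner product $\langle\hat I_t,\sigma^\iota_{j,s,m}\rangle_{\Omega_R}$ directly from the definition in Definition~\ref{defi:digitalshearlets} and exhibit the claimed change in the shear index. Without loss of generality I would argue on the cone $\Omega_R^{21}$; the other cones follow by the same symmetry used throughout. Writing out the definition of $\sigma^{21}_{j,s,m}$,
\[
\langle\hat I_t,\sigma^{21}_{j,s,m}\rangle_{\Omega_R}
= \tfrac{1}{\sqrt{|\cR_{j,s}|}}\sum_{(\ox,\oy)\in\Omega_R^{21}}
\hat I_t(\ox,\oy)\,\overline{C(\ox,\oy)}\,\overline{W(4^{-j}\ox)}\,\overline{V^j(s+2^{j}\tfrac{\oy}{\ox})}\,
e^{-2\pi i\ip{m}{(4^{-j}\ox,\,2^{j}\frac{\oy}{\ox})}}.
\]
First I would substitute the hypothesis $\hat I_t(\ox,\oy)=\hat I(\ox,\oy-t\ox)$, valid on the support of $\sigma^{21}_{j,s,m}$, so that the sum becomes one over $\hat I$ evaluated at sheared arguments.

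The key step is a reindexing. On the cone $\Omega_R^{21}$ the grid points are $(\ox,\oy)=(\tfrac{2k}{R},-\tfrac{2k}{R}\cdot\tfrac{2\ell}{N})$, so the ratio $\oy/\ox=-\tfrac{2\ell}{N}$ is the genuine angular coordinate and the map $(\ox,\oy)\mapsto(\ox,\oy-t\ox)$ acts on it by $\tfrac{\oy}{\ox}\mapsto\tfrac{\oy}{\ox}-t$, i.e.\ it shifts $\ell$ by $t\tfrac{N}{2}$, equivalently shifts the discrete shear index by $2^jt$ at scale $j$. This is precisely why the hypotheses $2^jt\in\bZ$ and the containment conditions on $s,s+2^jt$ are imposed: they guarantee the shifted index $\ell'=\ell+t\tfrac N2$ (equivalently $s'=s+2^jt$) still runs over exactly the index set describing $\Sp\sigma^{21}_{j,s+2^jt,m}$, and that the shift is by an integer number of grid lines so that no interpolation or boundary correction is needed. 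Under this substitution the radial factor $W(4^{-j}\ox)$ and the weight $C(\ox,\oy)$ are unchanged (they depend only on $\ox$, hence on $k$, not on $\oy$), while $V^j(s+2^j\tfrac{\oy}{\ox})$ becomes $V^j(s+2^jt+2^j\tfrac{\oy'}{\ox'})$, matching the bump window of $\sigma^{21}_{j,s+2^jt,m}$; and $|\cR_{j,s}|=|\cR_{j,s+2^jt}|$ since both interior shears produce the same $\cL^1_j$ and $\cL^2_{j,s}=2^{-j}N+1$.

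Finally I would track the exponential. After the reindexing the adjusted exponential term is $e^{-2\pi i\ip{m}{(4^{-j}\ox',\,2^j\frac{\oy'}{\ox'})}}$ evaluated on the new grid point, but the shift $\tfrac{\oy}{\ox}=\tfrac{\oy'}{\ox'}+t$ produces an extra phase $e^{-2\pi i\,m_2\,2^j t}$ pulled out of the sum (since $2^j\tfrac{\oy}{\ox}$ appears paired with $m_2$). Collecting everything,
\[
\langle\hat I_t,\sigma^{21}_{j,s,m}\rangle_{\Omega_R}
= e^{-2\pi i\,m_2\cdot 2^j t}\cdot\tfrac{1}{\sqrt{|\cR_{j,s+2^jt}|}}\sum_{(\ox',\oy')\in\Omega_R^{21}}
\hat I(\ox',\oy')\,\overline{\sigma^{21}_{j,s+2^jt,m}(\ox',\oy')/e^{\cdots}}\cdots
= \langle\hat I,\sigma^{21}_{j,s+2^jt,m}\rangle_{\Omega_R}\cdot e^{-2\pi i\,m_2\cdot 2^j t},
\]
which is the assertion. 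The main obstacle I anticipate is purely bookkeeping: verifying that the shifted index set coincides \emph{exactly} with the support description of $\sigma^{21}_{j,s+2^jt,m}$ (using \eqref{eq:defi_ell} and the interior-shear case of \eqref{eq:L2}), and confirming that the assumed negligibility of the aliasing identity $\hat I_t(\ox,\oy)=\hat I(\ox,\oy-t\ox)$ on the relevant support is all that is needed — no genuine analytic estimate is required, only careful alignment of discrete indices and a single phase extraction.
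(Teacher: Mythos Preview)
Your proposal is correct and follows essentially the same approach as the paper: reduce to $\Omega_R^{21}$, expand the inner product from Definition~\ref{defi:digitalshearlets}, substitute $\hat I_t(\ox,\oy)=\hat I(\ox,\oy-t\ox)$, reindex via $(\tilde\omega_x,\tilde\omega_y)=(\ox,\oy-t\ox)$ so that $W$ is unchanged, $V^j(s+2^j\tfrac{\oy}{\ox})$ becomes $V^j(s+2^jt+2^j\tfrac{\tilde\omega_y}{\tilde\omega_x})$, and the exponential picks up the phase $e^{-2\pi i m_2\cdot 2^jt}$. One minor correction: your justification that $C(\ox,\oy)$ ``depends only on $\ox$'' is not literally true (it encodes seam-line membership), but the hypothesis $-2^j<s,s+2^jt<2^j$ keeps both supports strictly in the interior of the cone, so $C\equiv1$ there and your conclusion stands.
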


\begin{proof}
It is sufficient to prove the claim for the case $\iota=21$, since the other cones can be handled similarly. In this case,
$
(\ox,\oy) = (\tfrac{2k}{R}, \tfrac{2k}{R} \cdot \tfrac{-2 \ell}{N})
$
and hence
\[
(\tilde{\omega}_x,\tilde{\omega}_y)  := (\ox,\oy-t\ox) = (\tfrac{2k}{R},  \tfrac{2k}{R} \cdot \tfrac{-2(\ell+tN/2)}{N}).
\]
Since $\hat{I}_t = \hat{I}((S_t^{-1})^T\cdot)$  on the support of $\sigma_{j,s,m}^{\iota}$, we have
\[
\begin{aligned}
&{\sqrt{|\cR_{j,s}|}}\cdot\langle \hat{I}_t, \sigma^{21}_{j,s,m}\rangle_{\Omega_R} \\
&= \hspace*{-0.25cm} \sum_{(\ox,\oy)\in\Omega_R^{21}}\hat{I}_t(\ox,\oy)\overline{W(4^{-j}\ox)V^j(s+2^j\tfrac{\oy}{\ox})}
e^{-2\pi i \ip{m}{(4^{-j} \ox,2^{j}\frac{\oy}{\ox})}}\\
% & =\hspace*{-0.25cm} \sum_{(\ox,\oy)\in\Omega_R^{21}}\hat{I}(\ox,\oy-t\ox)\overline{W(4^{-j}\ox)V^j(s+2^j\tfrac{\oy}{\ox})}
%e^{-2\pi i \ip{m}{(4^{-j} \ox,2^{j}\frac{\oy}{\ox})}}\\
& =\hspace*{-0.25cm}  \sum_{(\tilde{\omega}_x,\tilde{\omega}_y)\in (S_t^{-1})^T\Omega_R^{21}}\hspace*{-0.25cm} \hat{I}(\tilde{\omega}_x,\tilde{\omega}_y)
\overline{W(4^{-j}\tilde{\omega}_x)V^j(s+2^jt+2^j\tfrac{\tilde{\omega}_y}{\tilde{\omega}_x})}e^{-2\pi i \ip{m}{(4^{-j} \ox,2^{j}\frac{\oy}{\ox})}}\\
&= \hspace*{-0.25cm} \sum_{(\tilde{\omega}_x,\tilde{\omega}_y)\in (S_t^{-1})^T\Omega_R^{21}}\hspace*{-0.25cm} \hat{I}(\tilde{\omega}_x,\tilde{\omega}_y)
\overline{W(4^{-j}\tilde{\omega}_x)V^j(s+2^jt+2^j\tfrac{\tilde{\omega}_y}{\tilde{\omega}_x})}
e^{-2\pi i \ip{m}{(4^{-j}\tilde{\omega}_x,2^{j}\frac{\tilde{\omega}_y}{\tilde{\omega}_x}+2^jt)}}
\\&={\sqrt{|\cR_{j,s}|}}\cdot\langle \hat{I}, \sigma^{21}_{j,s+2^jt,m}\rangle e^{-2\pi i m_2 \cdot 2^jt}.
\end{aligned}
\]
This proves the claim.
\end{proof}

%***********************************************************************************

\section{Implementation of the FDST and its Inverse}
\label{sec:implementation}

After the formal introduction of the FDST and its theoretical analysis, we now turn to
discuss the details of our implementation, including the forward transform, the adjoint transform, and the
inverse transform. The associated code \url{ShearLab-PPFT-1.0} can be downloaded from \url{www.ShearLab.org}.

Figure~\ref{fig:flowcharts} provides an overview of the main steps of of the FDST and its inverse, which
were defined in Sections \ref{sec:STfinitedata} and \ref{sec:ISTfinitedata}, respectively.

\begin{figure}[ht]
\begin{center}
\includegraphics[height=2.1in]{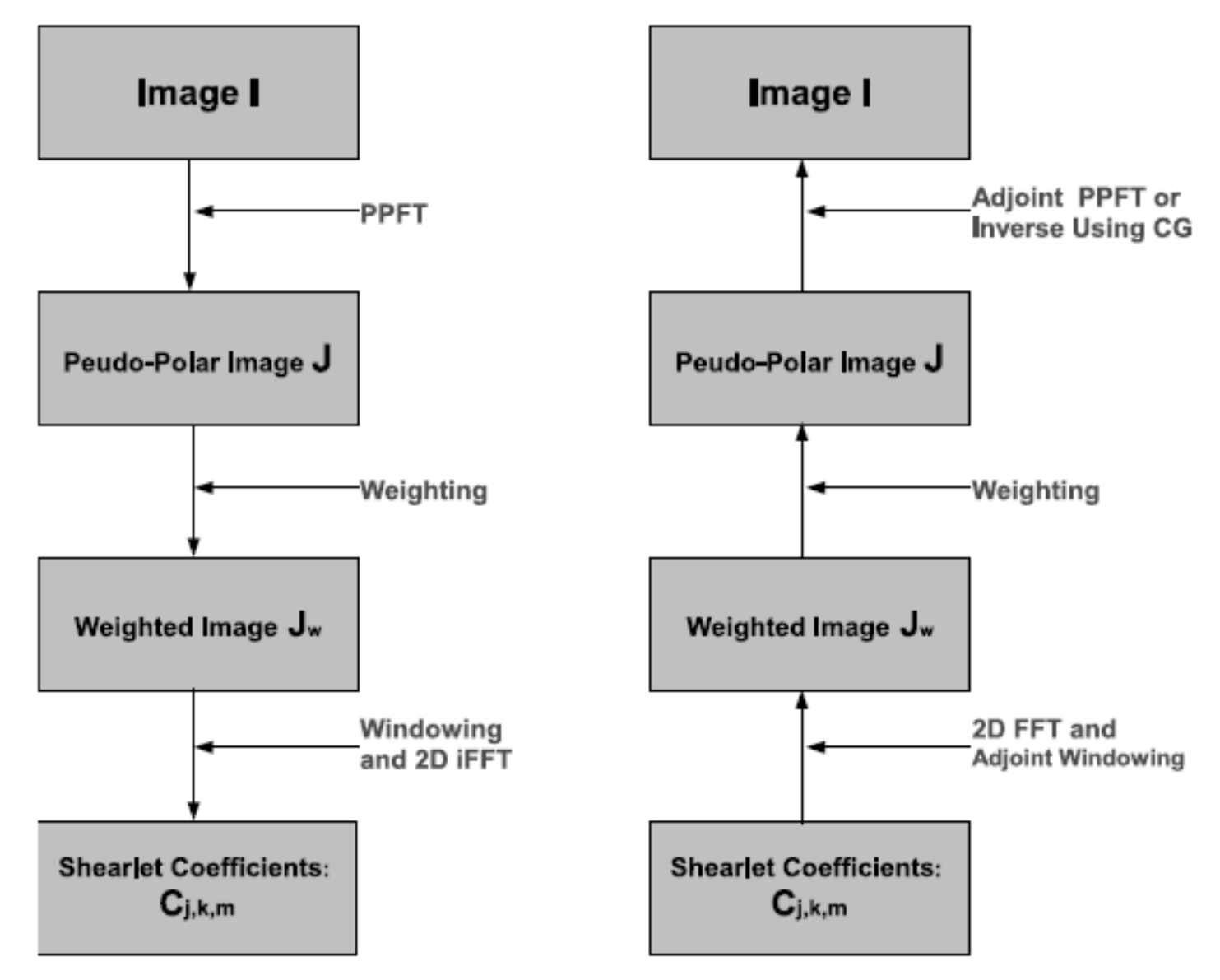}
\end{center}
\vspace{-0.5cm}
\caption{Flowcharts of the FDST (left) and its inverse (right).}
\label{fig:flowcharts}
\end{figure}%***********************************************************************************

\subsection{Choices of Parameters}
\label{subsec:choiceW}

Before discussing the implementation details of each transform, let us start by elaborating on the choices of the
main parameters involved in our design of the FDST.

\subsubsection{Choice of the Parameter $m_0$}

The definition of the pseudo-polar Fou\-rier transform \eqref{def:ppft} has $m_0$ as a free parameter. In the paper
\cite{ACDIS08}, in which only the oversampling rate $R=2$ was considered, $m_0$ was chosen to be $2N+1$. This
particular choice enabled utilization of the 1D-FFT for the implementation of the fast pseudo-polar Fourier
transform. In the situation of an arbitrary oversampling rate $R$, which we consider, the fast pseudo-polar
Fourier transform is based on a (discrete) fractional Fourier transform.  In order to utilize the 1D-FFT along
one direction, $m_0$ has necessarily to be chosen as $\tfrac{2}{R}(RN+1)$; for details see Subsection \ref{subsec:forward}.
When choosing a different parameter $m_0$, the fractional Fourier transform needs to be applied on both directions, which
certainly would significantly lower the speed of the FDST. The (discrete) fractional Fourier transform can be implemented
with the same complexity as the 1D-FFT, but a different constant; in fact it is about 5 times slower than the 1D-FFT,
see \cite{Bailey:frFT}. To accelerate the speed, in our \url{ShearLab} package, the default $m_0$ is consequently set
to be $\tfrac{2}{R}(RN+1)$.

\subsubsection{Choice of Weight Function $w$}

A second free parameter is the weight function $w$, for which the criterion for isometry in
Theorem \ref{theo:weightedisometry} is required; and we discussed some choices in Subsection \ref{subsec:comparison}.
As could be seen, the performance in terms of almost isometry $M_{isom}$ differs depending on the
types of images which are considered. Hence the weight needs to chosen depending on the application.
It should be also emphasized that the particular type of weighting we considered in Subsection
\ref{subsec:comparison} provide good preconditioners for the conjugate gradient method, in case an
even more accurate inverse than the adjoint is required. In our \url{ShearLab} package, various
choices of $w$ are available.

\subsubsection{Choice of Window Functions $W_0, V_0$ and $W, V$}

The final main parameter is the choice of the window functions $W_0, V_0$ and $W, V$. In our implementation,
we use Meyer wavelets and define $W_0$ and $W$ to be the Fourier transform of the Meyer scaling function and
wavelet function, respectively, i.e.,
\[
W_0(\xi)=\left\{
\begin{array}{lcl}
1& : & |\xi|\leq\frac{1}{4},\\
\cos\left[\frac{\pi}{2}\nu(\frac{4}{3}|\xi|-\frac13)\right]&:& \frac{1}{4}\le|\xi|\le 1,\\
0 &:& \mbox{otherwise},
\end{array}\right.
\]
and
\[
W(\xi)=
\left\{\begin{array}{lcl}
\sin\left[\frac{\pi}{2}\nu(\frac43|\xi|-\frac13)\right]&:& \frac{1}{4}\le|\xi|\leq 1,\\
\cos\left[\frac{\pi}{2}\nu(\frac{1}{3}|\xi|-\frac13)\right]&:& 1\leq|\xi|\le 4,\\
0 &:& \mbox{otherwise},
\end{array}\right.
\]
where $\nu$ is a $C^k$ function or $C^\infty$ function such that
\begin{equation}\label{def:nu}
\nu(x)=\left\{\begin{array}{lcl}
0 &:& x\leq 0,\\
1-\nu(1-x) &:& 0\le x\le 1,\\
1 &:& x\geq 1.
\end{array}\right.
\end{equation}
In \url{ShearLab}, $\nu$ is set to be $\nu(x)=2x^2$ for $0\le x\le 1/2$ and $\nu(x)=1-2(1-x)^2$ for $1/2\le x\le 1$,
%\[
%\nu(x)=\left\{\begin{array}{lcl}
%0 &:& x\le 0,\\
% 2x^2 &:& 0\le x\le \tfrac12,\\
% 1-2(1-x)^2 &:& \tfrac12\le x\le 1,\\
% 1 &:& x\ge 1,
%\end{array}\right.
%\]
which is a $C^1$ function. Other choices are also available. The choice of $\nu$ then fixes $W_0$ and $W$.
Since $|W_0(\xi)|^2+|W(\xi)|^2=1$ for $|\xi|\le 1$, the required condition \eqref{eq:summabilityW}
is satisfied. These choices for $W_0$, $W$, and $\nu$ are illustrated in Figure~\ref{fig:Nu-W-W0}.
\begin{figure}[ht]
\begin{center}
\includegraphics[height=1in]{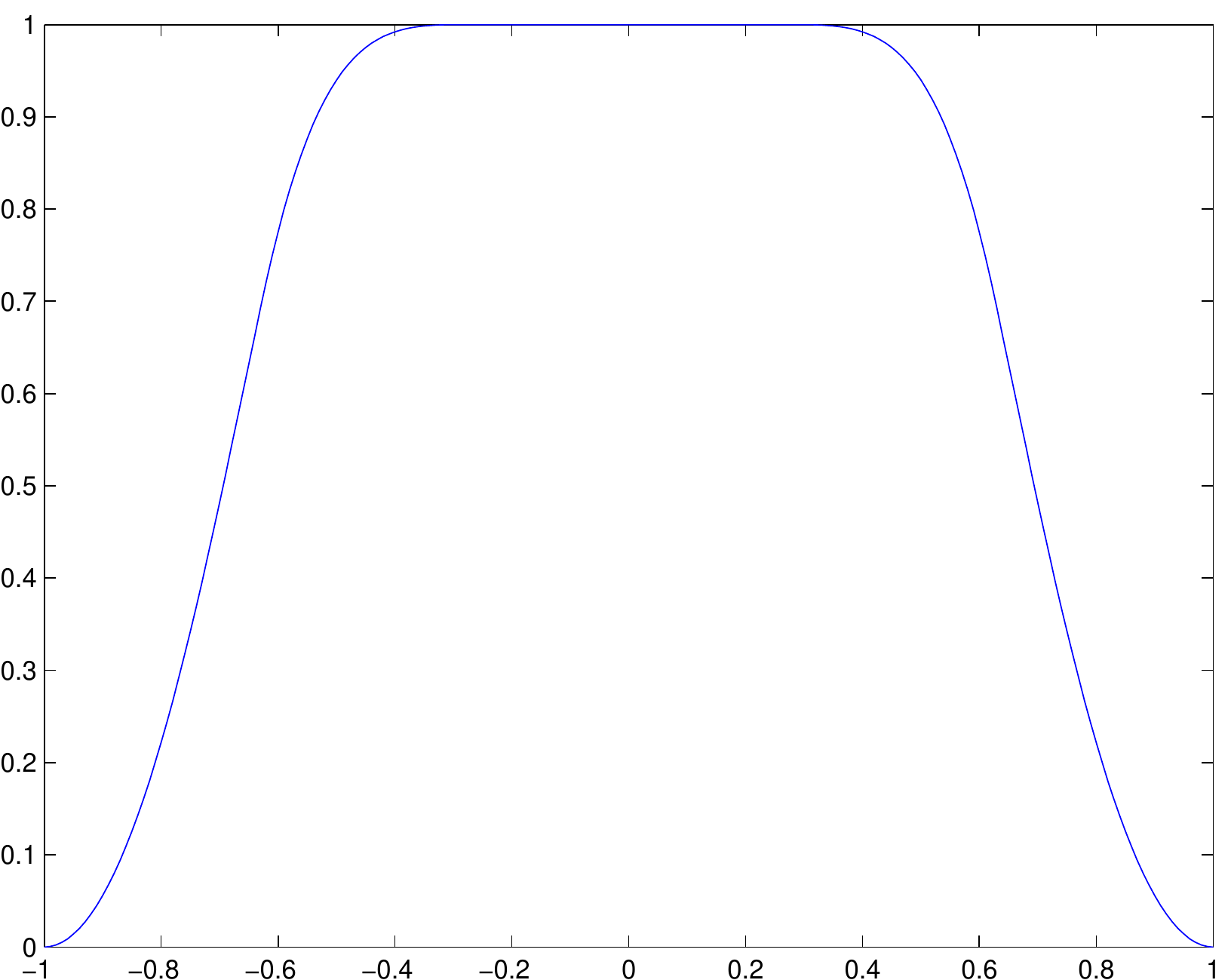}
\hspace*{1cm}
\includegraphics[height=1in]{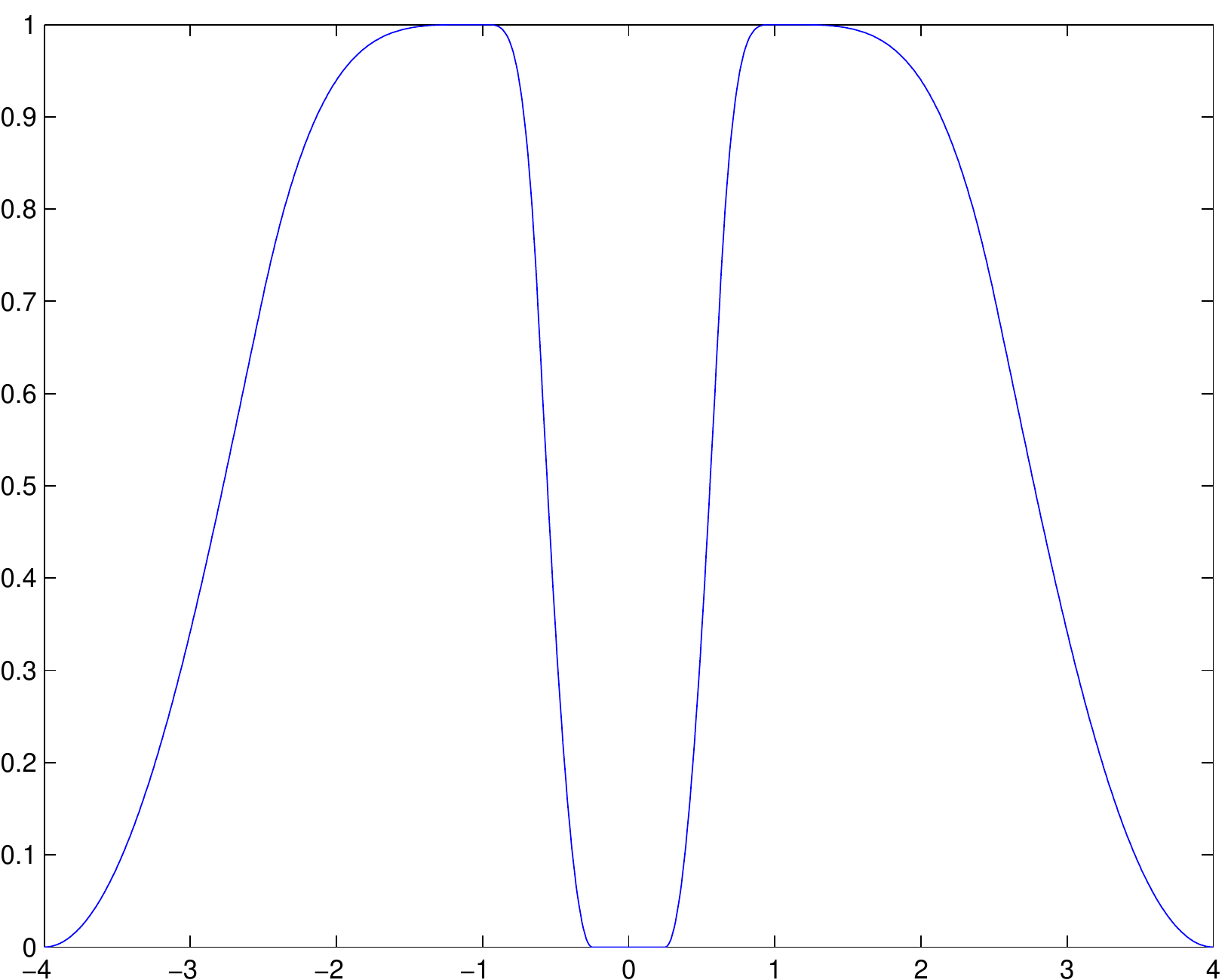}
\hspace*{1cm}
\includegraphics[height=1in]{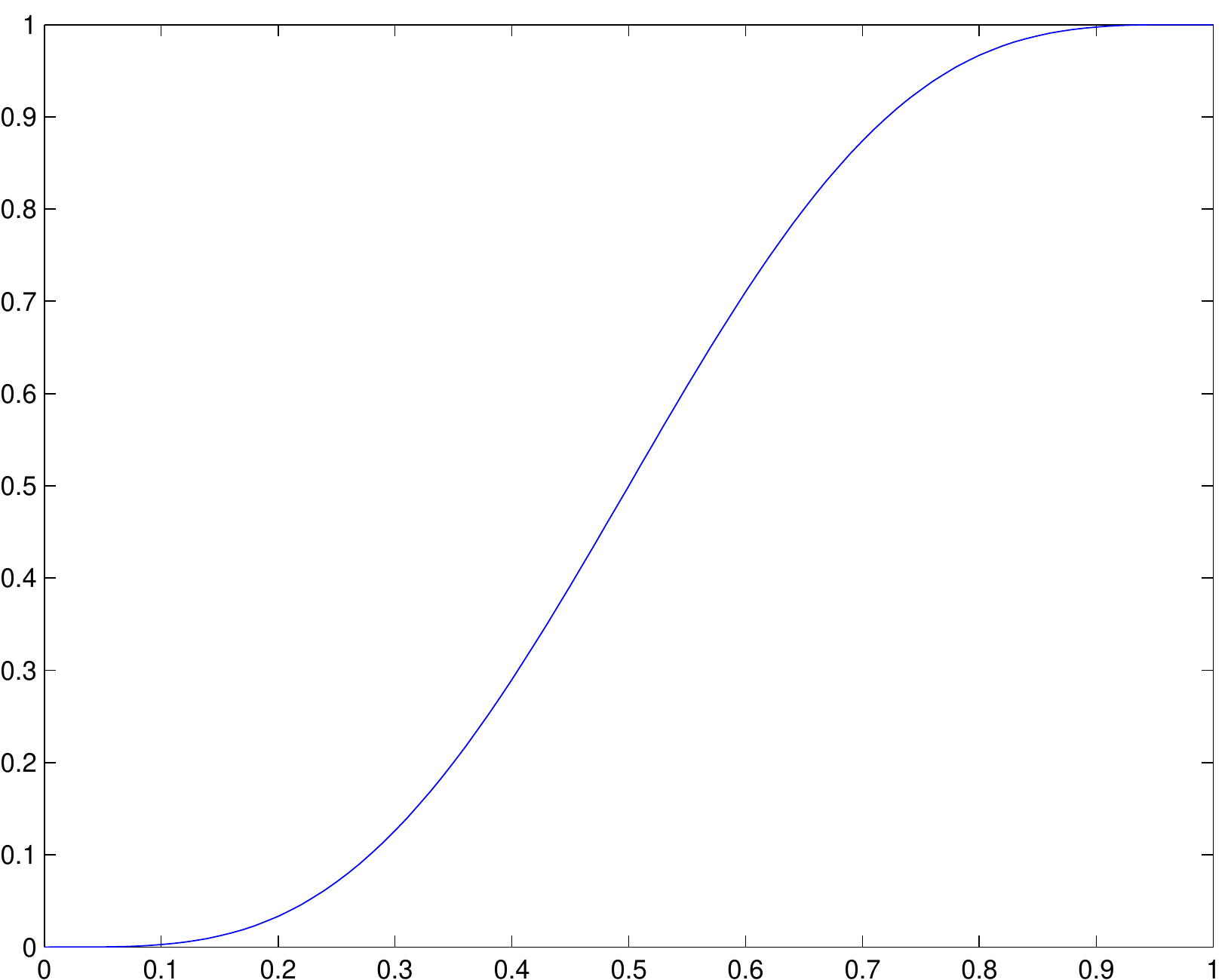}
\put(-306,-14){$W_0$}\put(-175,-14){$W$}\put(-44,-14){$\nu$}
\end{center}
\caption{The graphs of $W_0$, $W$, and $\nu$.}
\label{fig:Nu-W-W0}
\end{figure}

Provided $\nu$ satisfies \eqref{def:nu}, it can be used to design the bump function $V$ by setting
$V(\xi)=\sqrt{\nu(1+\xi)+\nu(1-\xi)}$ for $-1\le \xi\le 1$. This automatically satisfies \eqref{eq:summabilityV2},
and is our choice in the implementation. The function  $V_0$ is defined to be $V_0\equiv 1$.

%***********************************************************************************

\subsection{FDST}
\label{subsec:forward}

We now present our implementation of the FDST, which mainly consists of three parts:
The fast pseudo-polar Fourier transform, the weighting on the pseudo-polar grid, and the digital shearlet
windowing on the pseudo-polar grid followed by 2D-iFFT. These parts correspond to the operators $P$, $w$, and
$W$ introduced and discussed in Section~\ref{sec:ISTfinitedata}.
To facilitate our presentation of the implementation details, we require the following operators:

The \emph{(unaliased) fractional Fourier transform} (frFT) of a vector $c\in\bC^{N+1}$ with respect to a
fraction $\alpha \in \bC$ is defined to be
\[
(F_{N+1}^\alpha c)(k) :=\sum_{j=-N/2}^{N/2} c(j)e^{-2\pi i \cdot j\cdot k
\cdot\alpha}, \quad k = -\tfrac{N}{2}, \ldots, \tfrac{N}{2}.
\]
It was shown in \cite{Bailey:frFT}, that the fractional Fourier transform $F_{N+1}^{\alpha}c$ can be computed using
$O(N\log N)$ operations. In the special case of $\alpha = 1/(N+1)$, the fractional Fourier transform becomes the
(unaliased) 1D Fast Fourier Transform (1D-FFT), which in the sequel we will denote by $F_1$. Similarly, we will
denote the 2D Fast Fourier Transform (2D-FFT) by $F_2$, and the inverse of the $F_2$ by $F_2^{-1}$ (2D-iFFT).

Let now $N$ be even and $m>N$ be an odd integer. Then the {\em padding operator} $E_{m,n}$ acting on a vector $c\in\bC^{N}$
gives a symmetrically zero padding version of $c$ in the sense that
\[
(E_{m,N}c)(k) =
\begin{cases}
c(k) & k=-\tfrac{N}{2}, \ldots, \tfrac{N}{2}-1,\\
0     & k\in\{-\tfrac{m}{2}, \ldots, \tfrac{m}{2}\}\setminus\{-\tfrac{N}{2}, \ldots, \tfrac{N}{2}-1\}.
\end{cases}
\]

The fast pseudo-polar Fourier transform was introduced in \cite{ACDIS08}, however only for the oversampling rate $R=2$.
We will next extend this algorithm to an arbitrary oversampling rate and show that its complexity is also $O(N^2\log N)$.
For this, let $I$ be an image of size $N\times N$. We restrict to the cone $\Omega_R^1$. Choosing $m_0 = \frac{2}{R}(RN+1)$
to utilize the 1D-FFT, for $(\ox,\oy) \in  \Omega_R^1$,
\[
\begin{aligned}
\hat{I}(\ox,\oy)
&=\sum_{u,v=-N/2}^{N/2-1}I(u,v)e^{-\frac{2\pi i}{m_0}(u\ox+v\oy)}
=\sum_{u=-N/2}^{N/2-1}\sum_{v=-N/2}^{N/2-1}I(u,v)e^{-\frac{2\pi i}{m_0}(u\frac{-4k\ell}{RN}+v\frac{2k}{R})}\\
%& =\sum_{u=-N/2}^{N/2-1}\left(\sum_{v=-N/2}^{N/2-1}I(u,v)e^{-\frac{2\pi i vk}{m_0R/2}}\right)e^{-{2\pi i u\ell}\cdot \frac{-k}{m_0 R/2 \cdot N/2}}\\
& = \sum_{u=-N/2}^{N/2-1}\left(\sum_{v=-N/2}^{N/2-1}I(u,v)e^{-\frac{2\pi i vk}{RN+1}}\right)e^{-{2\pi i u\ell}\cdot \frac{-2k}{(RN+1) \cdot N}}.
\end{aligned}
\]
The above identity shows that $\hat{I}$  on $\Omega_R^1$ can be obtained by performing 1D-FFT on the extension of $I$ along direction $v$ and then applying
the frFT along direction $u$. More precisely, let $\tilde I(u,\cdot) := E_{RN+1,N}I(u,\cdot)$, $-N/2\le u\le N/2-1$ be the symmetric zero padding of
$I$ and let $\tilde I_1(u,\cdot):=F_1\tilde I(u,\cdot)$ be the 1D FFT of $\tilde I$ along direction $v$.  Also, let
$\tilde I_2(\cdot,k) := E_{N+1,N}\tilde I_1(\cdot,k)$ for $k=-\tfrac{RN}{2}, \ldots, \tfrac{RN}{2}$ be the  symmetric zero padding of
$\tilde I_1$ along direction $u$. Note that, $\tilde I_1$ is then of size $(RN+1)\times N$ and $\tilde I_2$ is of size $(RN+1)\times (N+1)$.Then the above
identity can be written as
%
%Now, let $\tilde I(u,\cdot) := E_{RN+1,N}I(u,\cdot)$, $\tilde I_1(u,\cdot):=F_1\tilde I(u,\cdot)$, $u=-N/2, \ldots, N/2-1$
%be the symmetric zero padding of $I$ and the 1D-FFT of $\tilde I$, along direction $v$, respectively. Also, let
%$\tilde I_2(\cdot,k) := E_{N+1,N}\tilde I_1(\cdot,k)$ for $k=-RN/2, \ldots, RN/2$ be the  symmetric zero padding of
%$\tilde I_1$ along direction $u$. Then the above identity can be written as
\[
\begin{aligned}
\hat{I}(\ox,\oy)& =\sum_{u=-N/2}^{N/2-1}\tilde I_1(u,k)e^{-{2\pi i u\ell}\cdot \frac{-k}{(RN+1) \cdot N/2}} =\sum_{u=-N/2}^{N/2}\tilde I_2(u,k)
e^{-{2\pi i u\ell}\cdot \frac{-2k}{(RN+1) \cdot N}}
\\& = (F_{N+1}^{\alpha_k}\tilde I_2(\cdot,k))(\ell),
\end{aligned}
\]
where  $\alpha_k=-\frac{k}{(RN+1)N/2}$ is the fraction in the frFT.

Thus, the pseudo-polar Fourier transform $\hat{I}(\ox,\oy), (\ox,\oy)\in\Omega_R^1$ -- similarly for $\Omega_R^2$ -- for
arbitrary oversampling rate $R$ can be computed by the steps described in Algorithm \ref{alg:ppft}.

%********************
\renewcommand{\algorithmiccomment}[1]{//#1}
\begin{algorithm}[H]
\caption{Fast Pseudo-Polar Fourier Transform of $I$ on $\Omega_R^1$}
\label{alg:ppft}
\begin{algorithmic}[1]\vspace*{0.2cm}
\item[(a)]{\bf Input}: Image $I$ of size $N\times N$.\\[1ex]
\item[(b)]{\bf Output}: The pseudo-polar Fourier transform $\hat{I}_{\Omega_R^1}$.\\[1ex]
\item[(c)] {\bf PPFT}: Image $I(u,v), -N/2\le u,v\le N/2-1$.
\STATE $\tilde I(u,\cdot)\leftarrow E_{RN+1,N}I(u,\cdot)$, $-N/2\le u\le N/2-1$. Symmetrically padding the image $I$ along direction $v$
to obtain an image $\tilde I$ of size $(RN+1)\times N$.

\STATE $\tilde I_1(u,\cdot)\leftarrow F_1\tilde I(u,\cdot)$, $-N/2\le u\le N/2-1$.
For each vector of $\tilde I$ along direction $v$, perform the 1D-FFT along direction $v$ to get $\tilde I_1$.

\STATE $\tilde I_2(\cdot,k)\leftarrow E_{N+1,N}\tilde I_1(\cdot,k)$, $-RN/2\le k\le RN/2$. Symmetrically padding the image $\tilde I_1$
along direction $u$ to get image $\tilde I_2$ whose size along direction $u$ is $N+1$.

\STATE $\hat I_{\Omega_R^1}\leftarrow F_{N+1}^{\alpha_k}\tilde I_2(\cdot,k)$, $-RN/2\le k\le RN/2$. Perform the
fractional Fourier transform with respect to a fraction $\alpha_k= -\frac{k}{(RN+1)N/2}$ along direction $u$.
\end{algorithmic}
\end{algorithm}

Since the 1D-FFT and 1D-frFT require only $O(N\log N)$ operations for a vector of size $N$, the total complexity of the pseudo-polar
Fourier transform is $O(N^2\log N)$ for an image of size $N\times N$. It should be emphasized that our Algorithm~1 is more simple and
efficient even in case $R=2$ than the algorithm described in \cite{ACDIS08}, in which $F_2$ is applied to $\tilde I$ followed by the
application of $F_1^{-1}$ to the resulting image in order to obtain $\tilde{I}_1$. The algorithm in \cite{ACDIS08} is slightly more
redundant while ours is more efficient by combining these two steps to one step. Moreover, the fractional Fourier transform along
direction $u$ is only of size $N+1$, as compared to size $RN+1$ in \cite{ACDIS08}.

We would like to also remark that, for a different choice of constant $m_0$, one can  compute the pseudo-polar Fourier transform also with complexity
$O(N^2\log N)$ for an image of size $N\times N$, in which case the fractional Fourier transform needs to be applied in both directions
$u$ and $v$ of the image. For example, for computing $\hat{I}$ on $\Omega_R^1$, the fractional Fourier transform is first applied to
$I$ along direction $v$ with a fixed fraction constant $\alpha = \tfrac{1}{m_0R/2}$, and then to the another direction $u$ with fractions
$\alpha_k =\tfrac{-k}{m_0R/2\cdot N/2}$ depending on $k$. In this case, it is, of course, slower than the special choice $m_0=\tfrac{2}{R}(RN+1)$,
since frFTs are involved in both directions while the special choice $m_0$ utilizes FFT.

The next step, which is an application of the weight function, is simply a point-wise multiplication on the pseudo-polar grid, i.e.,
\begin{itemize}
\item[(d)] {\bf Weighting}: Let $J:\Omega_R\rightarrow\bC$ be the pseudo-polar image of $I$ and $w:\Omega_R\rightarrow \bR^+$ be any suitable
weight function on $\Omega_R$. Compute the point-wise multiplication $J_w(\ox,\oy) = J(\ox,\oy)\cdot \sqrt{w(\ox,\oy)}$, $(\ox,\oy)\in\Omega_R$.
\end{itemize}

For the windowing, a sequence of subbands $\{\varphi^{\iota_0}_0: \iota_0\}\cup\{\sigma_{j,s,0}^{\iota}: j,s,\iota\}$ is computed, whose
frequency tiling covers the pseudo-polar grid. As discussed in Subsections~\ref{subsec:Windowing} and \ref{subsec:choiceW}, these subbands
are constructed using Meyer scaling functions and wavelets. The weighted image is then windowed by each subband, followed by an application
of the 2D-iFFT to each windowed subimage. This results in a sequence of coefficients -- the digital shearlet coefficients -- structured
block by block:

\begin{itemize}
\item[(e)]{\bf Subband Windowing}: Compute the digital shearlet coefficients by windowing $J_w$ with respect to the digital shearlets
$\{\varphi^{\iota_0}_n: \iota_0,n\}\cup\{\sigma_{j,s,m}^{\iota}: j,s,m,\iota\}$ defined in Definition~\ref{defi:digitalshearlets}.
\end{itemize}

For the detailed step-by-step description of the algorithm for the FDST used in \url{ShearLab}, we refer to
Algorithm~\ref{alg:forward} (see Appendix).

%***********************************************************************************

\subsection{Adjoint FDST}

The algorithm for the adjoint shearlet transform can be straightforwardly derived from the FDST, and the
main idea was already described in Section \ref{sec:ISTfinitedata}. It should be noted that the adjoint fractional Fourier
transform for a vector $c\in\bC^{N+1}$ with respect to a constant $\alpha\in\bC$ is  given by $F_{N+1}^{-\alpha}c$. Moreover,
for $m>N$ the adjoint operator $E_{m,N}^\star$ for the padding operator $E_{m,N}$ is given by $(E^\star_{m,N}c)(k) = c(k)$, $k=-N/2,\ldots,
N/2-1$ for a vector $c\in\bC^{m}$. The adjoint shearlet transform can be computed with a complexity $O(N^2\log N)$ similar
to the FDST, since it is obtained simply by `running the FDST backwards'.

For the detailed step-by-step description of the algorithm for the adjoint FDST used in \url{ShearLab}, we refer to
Algorithm~\ref{alg:adjoint}.

%***********************************************************************************
\subsection{ Inverse FDST}

The main idea to use the conjugate gradient method was already described in Section \ref{sec:ISTfinitedata}, and
for a detailed step-by-step description of the algorithm for the inverse FDST used in \url{ShearLab},
we refer to Algorithm~\ref{alg:inverse}.

%***********************************************************************************

%***********************************************************************************
%***********************************************************************************

\section{Quality Measures for Algorithmic Realization}
\label{sec:qualitymeasures}

To ensure and also prove that our implementation satisfies the previously proposed
desiderata, we will now define quality measures for each of those and in the sequel provide
numerical results on how accurate our implementation satisfies these. It is moreover
our hope that these measures shall serve as comparison measures for future implementations.
Although some measures are stated in `shearlet language', most are applicable to any
directional transform based on parabolic scaling.

In the following, $P$ shall denote the pseudo-polar Fourier transform defined by Algorithm \ref{alg:ppft}),
$w$ shall denote the weighting applied to the values on the pseudo-polar grid, $W$ shall be the windowing
with additional 2D-iFFT, $S$ shall denote the FDST defined by Algorithm \ref{alg:forward}, and $S^\star$
its adjoint defined by Algorithm \ref{alg:adjoint}.
We will further use the notation $G_{A}J$ for the solution of a matrix problem $AI=J$ using conjugate
gradient method with residual error set to be $10^{-6}$.

\vspace*{0.2cm}

\renewcommand{\labelenumi}{{\rm [D\arabic{enumi}]}}

\noindent[D1] {\sc Algebraic Exactness.}\\[1ex]
{\em \underline{Comments:}} We require the transform to be the precise implementation of a theory for digital
data on a pseudo-polar grid. In addition, to ensure numerical accuracy, we provide the following test,
which provides a quantitative measure for the closeness of the windows to form a tight frame.
%We wish to mention that this property can already be tested for smaller images, thereby giving sufficiently
%accurate results but reducing the testing time. We will following this philosophy in most of the tests
%in the sequel except for speed.
\\[1ex]
{\em \underline{Measure:}} Generate a sequence of $5$ random images $I_1,\ldots,I_{5}$ (the integer $5$ is chosen for the purpose of fixing
the number of images to enable precise comparison) on the pseudo-polar grid for $N=512$ and $R=8$ with standard normally
distributed entries. Our quality measure will then be the Monte Carlo estimate for the operator norm
$\|W^\star W -  {\Id} \|_{op}$  given by
\[
M_{alg} = \max_{i=1,\ldots,5} \frac{\|W^\star W I_i - I_i \|_{2}}{\|I_i\|_2}.
\]

\noindent[D2] {\sc Isometry of Pseudo-Polar Fourier Transform.}\\[1ex]
{\em \underline{Comments:}} To ensure isometry of the utilized pseudo-polar Fourier transform, we introduced a careful
weighting of the pseudo-polar grid. The following test will now measure the closeness to being an isometry.
We expect to see a trade-off between the oversampling rate and the closeness to being an isometry. Since the
measure shall however serve as a common ground to compare different algorithms based on parabolic scaling,
we do not take the oversampling rate into account in the proposed measure. Instead, we would like to
remind the reader that this rate will instead affect the measure for speed [D6]. In the sequel, we will now
provide three different measures, each being designed to test a different aspect. \\[1ex]
{\em \underline{Measure:}}
\bitem
\item {\em Closeness to tightness.} Generate a sequence of $5$ random images $I_1,\ldots,I_{5}$
of size $512 \times 512$ with standard normally distributed entries. Our quality measure will then
be the Monte Carlo estimate for the operator norm $\|P^\star w P -  {\Id} \|_{op}$  given by
\[
M_{isom_1} = \max_{i=1,\ldots,5} \frac{\|P^\star w P I_i - I_i \|_{2}}{\|I_i\|_2}.
\]
\item {\em Quality of preconditioning.} Our quality measure will be
the spread of the eigenvalues of the Gram operator $P^\star w P$
given by
\[
M_{isom_2} = \frac{\lambda_{\max}(P^\star w P)}{\lambda_{\min}(P^\star w P)}.
\]
\item {\em Invertibility.}  Our quality measure will be the Monte Carlo estimate for the
invertibility of the operator $\sqrt{w} P$ using conjugate gradient method $G_{\sqrt{w}P}$ given by
\[
M_{isom_3} = \max_{i=1,\ldots,5} \frac{\|G_{\sqrt{w}P} \sqrt{w} P I_i - I_i \|_{2}}{\|I_i\|_2}.
\]
%\item Quality of weights. This is the only case when we allow to deviate from our
%strict mathematical treatment to a visual examination of the weights $w$. \gk{[measure of artifacts
%created by non-smooth weights??
\eitem

\noindent[D3] {\sc Tight Frame Property.}\\[1ex]
{\em \underline{Comments:}} We now combine [D1] and [D2] to allow comparison with
other transforms, since those singleton tests might not be possible for any transform due to a
different inner structure.\\[1ex]
{\em \underline{Measure:}}
Generate a sequence of $5$ random images $I_1,\ldots,I_{5}$
of size $512 \times 512$ with standard normally distributed entries.
\bitem
\item {\em Adjoint transform.}
Our quality measure will be the Monte Carlo estimate for the operator norm $\|S^\star S -  {\Id} \|_{op}$
given by
\[
M_{tight_1} = \max_{i=1,\ldots,5} \frac{\|S^\star S I_i - I_i \|_{2}}{\|I_i\|_2}.
\]
\item {\em Inverse transform.}
Our quality measure will be the Monte Carlo estimate for the operator norm $\|G_{\sqrt{w}P} W^\star S -  {\Id} \|_{op}$
given by
\[
M_{tight_2} = \max_{i=1,\ldots,5} \frac{\|G_{\sqrt{w}P} W^\star S I_i - I_i \|_{2}}{\|I_i\|_2}.
\]
\eitem

\noindent[D4] {\sc Space-Frequency-Localization.}\\[1ex]
{\em \underline{Comments:}} The purpose of this test is to provide quantitative measures for the
degree to which the windows -- in our case the digital shearlets --  are localized in both space
and frequency. For this, we numerically measure mathematically precise notions of both decay and
smoothness.\\[1ex]
{\em \underline{Measure:}} Let $I$ be a shearlet in a $512 \times 512$ image centered at the origin
$(257,257)$ with slope $s=0$ at scale $j=3$, which in our case is the shearlet $\sigma_{3,0,0}^{11}+\sigma_{3,0,0}^{12}$.
Our quality measure will then be four-fold:
\bitem
\item {\em Decay in spatial domain.} Compute the decay rates $d_1,\ldots,d_{512}$
along lines parallel to the $y$-axis starting from the line $[257,\;:\;]$ and the decay rates
$d_{512}$, $\ldots$, $d_{1024}$ with $x$ and $y$ interchanged in the following way: Consider
exemplarily the line $[257:512,1]$. First compute the smallest monotone majorant $M(x,1)$, $x=257,\ldots,512$
 -- note that we could have also chosen a different `envelope' --
for the curve $|I(x,1)|$, $x=257,\ldots,512$. Then the decay rate along the line $[257:512,1]$ is defined
to be the average slope of the line, which is a least square fit to the curve $\log(M(x,1))$, $x=257,\ldots,512$.
Based on these decay rates, we choose our measure to be the average of the decay rates given by
\[
M_{decay_1} = \frac{1}{1024}\sum_{i=1,\ldots,1024} d_i.
\]
\item {\em Decay in frequency domain.} To check whether the Fourier transform of $I$
is compactly supported and to check its decay rate, let $\hat{I}$ be the 2D-FFT of $I$ and compute the
decay rates $d_i$, $i=1,\ldots,1024$ as before. Then we define the following two measures:
\bitem
\item[$\Diamond$] {\em Compactly supportedness.}
\[
M_{supp} = \frac{\max_{|u|,|v|\le 3}|\hat I(u,v)|}{\max_{u,v}|\hat I(u,v)|}.
\]
\item[$\Diamond$] {\em Decay rate.}
\[
M_{decay_2} = \frac{1}{1024}\sum_{i=1,\ldots,512} d_i.
\]
\eitem
\item {\em Smoothness in spatial domain.} Smoothness will be measured by the average of local H\"older regularity.
For this, for each $(u_0,v_0) \in \{1, \ldots, 512\}^2$, compute $M(u,v)= |I(u,v)-I(u_0,v_0)|$, $0<\max\{|u-u_0|,|v-v_0|\}\le 4$. Then the local
H\"older regularity $\alpha_{u_0,v_0}$ is the least square fit to the curve $(u,v) \mapsto \log(|M(u,v)|)$.
The measure is given by
\[
M_{smooth_1} = \frac{1}{512^2}\sum_{u,v}\alpha_{u,v}.
\]
\item {\em Smoothness in frequency domain.} Smoothness will again be measured by the average of local H\"older regularity.
Proceed as for measuring the smoothness in spatial domain now applied to $\hat{I}$, the 2D-FFT of $I$, to derive the
local H\"older regularity $\alpha_{u,v}$ for each $(u,v) \in \{1, \ldots, 512\}^2$. The measure is then given by
\[
M_{smooth_2} = \frac{1}{512^2}\sum_{u,v}\alpha_{u,v}.
\]
\eitem

\noindent[D5] {\sc True Shear Invariance.}\\[1ex]
{\em \underline{Comments:}} This test shall provide a measure for how close the transform is to being shear invariant.
In our case, the theory gives
\[
\ip{2^{3j/2} \psi(S_k^{-1} A_{4^{j}} \cdot -m)}{f(S_s \cdot)}
= \ip{2^{3j/2} \psi(S_{k+2^{j} s}^{-1} A_{4^{j}} \cdot -m)}{f},
\]
and we expect to see this behavior in the shearlet coefficient as discussed in Subsection~\ref{subsec:shearinvariance}.\\[1ex]
{\em \underline{Measure:}} Let $I$ be an $256 \times 256$ image with an edge through the origin $(129,129)$ of slope $0$.
Fix $s=1/2$, generate an image $I_s:=I(S_s\cdot)$, and let $j\in\{1,2,3,4\}$ be a scale. Note that $2^js\in\bZ$ for each of $j\in\{1,2,3,4\}$.
Our quality measure will then be the curve
\[
M_{shear,j} = \max_{-2^j<k,k+2^js<2^j} \frac{\|C_{j,k}(S I_s)  - C_{j,k+2^js}( S I)\|_2}{\|I\|_2}, \qquad \text{scale } j=1,2,3,4,
\]
where $C_{j,k}$ is the set of coefficients -- in our case the shearlet coefficient --  at scale $j$ and at shear index $k$.

\vspace*{0.2cm}

\noindent[D6] {\sc Speed.}\\[1ex]
{\em \underline{Comments:}}
%We expect the `normal' size of images decomposed with the shearlet
%transform to be $N=1024$, hence this will serve as our testing range.\\
When testing the speed, not only the asymptotic behavior will be measured but also the involved constants.
It should further be mentioned that for practical purposes it is usually sufficient to analyze the speed
up to a size of $N=512$.\\[1ex]
{\em \underline{Measure:}} Generate a sequence of $5$ random images $I_i$, $i=5,\ldots,9$ of size $2^i \times 2^i$
with standard normally distributed entries. Let $s_i$ be the speed of the transform $S$ applied to
$I_i$.
%Our hypothesis is that the speed behaves like $s_i = a + b \cdot (2^{2i})^d$, in particular,
%that the line has an intercept due the fact that we model the speed curve as a line, but it is, in fact, a function with
%a discontinuity at about memory usage of $2 \cdot $ cache. Also, we ignore any $\log$-factors in the
%running time.
Our hypothesis is that the speed behaves like $s_i = c \cdot (2^{2i})^d$; $2^{2i}$ being the
size of the input. Let now $\tilde{d}_a$ be the average slope of the line,
which is a least square fit to the curve $i \mapsto \log(s_i)$.
%We first minimizes the error of the approximation over $\tilde{a}$ to determine the intercept $a$.
Let also $f_i$ be the 2D-FFT applied to $I_i$, $i=5,\ldots,9$. Our quality measure will then be three-fold:
\bitem
\item {\em Complexity.}
\[
M_{speed_1} = \frac{\tilde{d}_a}{2\log 2}.
\]
\item {\em The constant.}
\[
M_{speed_2} = \frac15 \sum_{i=5}^{9} \frac{s_i}{(2^{2i})^{M_{speed_1}}}.
\]
\item {\em Comparison with 2D-FFT.}
\[
M_{speed_3} = \frac15 \sum_{i=5}^{9} \frac{s_i}{f_i}.
\]
\eitem

\noindent[D7] {\sc Geometric Exactness.}\\[1ex]
{\em \underline{Comments:}} Geometric objects such as edges should be as precise as possible be resembled by the
coefficients in the sense that analyzing the decay of the coefficient should detect such features. For the
FDST, these properties were theoretically analyzed in Subsection~\ref{subsec:line}.
Our model will be an image containing one line of a particular slope.\\[1ex]
{\em \underline{Measure:}} Let $I_1,\ldots, I_8$ be $256 \times 256$ images of an edge through the origin $(129,129)$ and of slope $[-1,-0.5,0,0.5,1]$
and the transpose of the middle three, and let $c_{i,j}$ be the associated shearlet coefficients
for image $I_i$ at scale $j$. Our quality measure will be two-fold:
\bitem
\item {\em Decay of significant coefficients.}
Consider the curve
\[
\frac18 \sum_{i=1}^8 \max{|c_{i,j} \text{(of shearlets aligned with the line)}|}, \qquad \text{scale } j,
\]
let $d$ be the average slope of the line, which is a least square fit to the logarithm of this curve, and define
\[
M_{geo_1} = d.
\]
\item {\em Decay of insignificant coefficients.}
Consider the curve
\[
\frac18 \sum_{i=1}^8 \max{|c_{i,j} \text{(of all other shearlets)}|}, \qquad \text{scale } j,
\]
let $d$ be the average slope of the line, which is a least square fit to the logarithm of this curve, and define
\[
M_{geo_2} = d.
\]
\eitem

\noindent[D8] {\sc Robustness.}\\[1ex]
{\em \underline{Comments:}} Two different types of robustness will be analyzed which
we believe are the most common impacts on a sequence of transform coefficients. We wish
to mention that we certainly also could have considered additional manipulations of the
coefficients such as deletions; but to provide sufficiently many tests balanced with a
reasonable testing time, we decided to restrict to those two.
%Notice that the
%curves we will observe are typically segmented into three parts, firstly, due to occurring
%numerical instabilities, and, secondly, due to loss of significant features.
\\[1ex]
{\em \underline{Measure:}}
\bitem
\item {\em Thresholding.} Let $I$ be the regular sampling of a Gaussian function with mean 0 and variance $256$
on $\{-128,...,127\}^2$ generating an $256 \times 256$-image. The quality measure for $k=1,2$ will be the curve
\[
M_{thres_{k,p_k}} = \frac{\|G_{\sqrt{w}P} W^\star \; {\rm thres}_{k,p_k} \, S I  - I\|_2}{\|I\|_2},
\]
where
\bitem
\item ${\rm thres}_{1,p_1}$ discards $100 \cdot (1-2^{-p_1})$ percent of the coefficients with $p_1 = [2:2:10]$),
\item ${\rm thres}_{2,p_2}$ sets all those coefficients to zero with absolute values below
the threshold $m/2^{p_2}$ with $m$ being the maximal absolute value of all coefficients with $p_2 = [0.001:0.01:0.05]$).
\eitem
\item {\em Quantization.} Let $I$ be the regular sampling of a Gaussian function with mean 0 and variance $256$
on $\{-128,...,127\}^2$ generating an $256 \times 256$-image.  The quality measure
will be the curve
\[
M_{quant,q} = \frac{\|G_{\sqrt{w}P} W^\star \; {\rm quant}_{q} \, S I  - I\|_2}{\|I\|_2}, \qquad q =[8:-0.5:6],
\]
where
${\rm quant}_q(c)={\rm round}(c/(m/2^q)) \cdot (m/2^q)$ and
$m$ being the maximal absolute value of all coefficients.
\eitem

%***********************************************************************************
%***********************************************************************************

\section{Numerical Evidence}
\label{sec:numerics}

%***********************************************************************************

In this section, we provide numerical results for the tests [D1]--[D8] detailed in
Section \ref{sec:qualitymeasures} of our
present implementation, which we consider as having reached a mature state after careful tuning the
parameters depending on these performance measures.
The associated code \url{ShearLab-PPFT-1.0} can be downloaded from \url{www.ShearLab.org}.

\subsection{Results for Tests [D1]--[D3]}

Table \ref{tab:5} presents the performance with respect to the quantitative measures in [D1]--[D3].

\begin{table}[ht]
\label{tab:5}
\caption{Results for [D1]--[D3]}
\begin{tabular}{p{1.7cm}p{1.7cm}p{1.7cm}p{1.7cm}p{1.7cm}p{1.9cm}}
\toprule
$M_{alg}$  &  $M_{isom_1}$  &  $M_{isom_2}$  & $M_{isom_3}$  &  $M_{tight_1}$  &  $M_{tight_2}$ \\
\midrule
6.6E-16  &  9.3E-4  &  1.834  & 3.3E-7  &  9.9E-4  &  3.8E-7\\
\bottomrule
\end{tabular}
\end{table}

The quantity $M_{alg}\approx$ 6.6E-16 confirms that the $\cD\cS\cH$ defined in Definition~\ref{defi:digitalshearlets}
is indeed up to machine precision a tight frame.

The slight tightness deficiency of $M_{tight_1}\approx$ 9.9E-4 (also $M_{isom_1}\approx$ 9.3E-4) mainly results from the isometry deficiency
of the weighting. However, for practical purposes this transform can be still considered to be an isometry allowing the utilization of the
adjoint as inverse transform. Progress on the choice of weights will further improve this measure. Observe though that there is a trade-off
between the sophistication of the weights, the running time of $S$, and the smoothness of the shearlets.

Further, note that the condition number ($M_{isom_2} \approx 1.834$) of the Gram matrix is quite close to 1, which -- in case an even higher
accurate inverse than the adjoint is required -- allows us to employ the conjugate gradient method very efficiently for computing the inverse
of the FDST ($M_{isom_3}\approx $ 3.3E-7 and $M_{tight_2}\approx$ 3.8E-7).

\subsection{Space-Frequency-Localization Test [D4]}

The reference shearlet $I$ required for [D4] is illustrated in Figure~\ref{fig:Shearlet} in both the spatial and frequency domain.

\begin{figure}[ht]
\begin{center}
\includegraphics[height=1.0in]{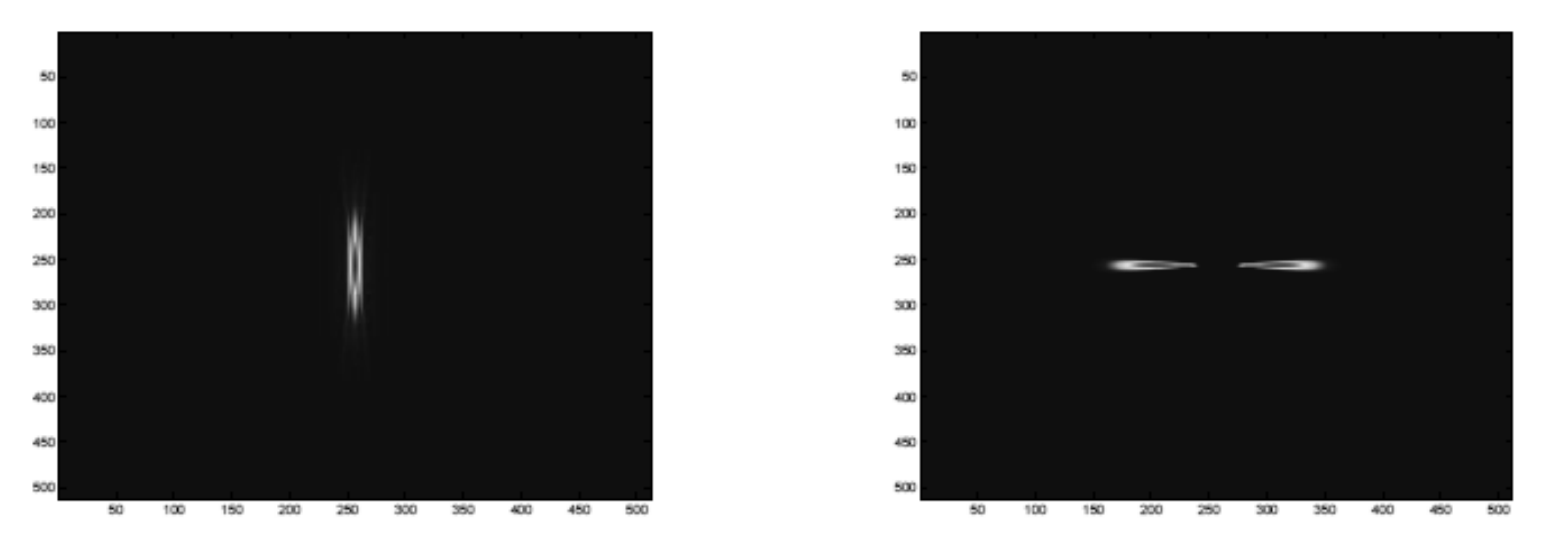}
%\includegraphics[height=1.0in]{shearletN512t}
%\hspace*{1cm}
%\includegraphics[height=1.0in]{shearletN512f}
\end{center}
\caption{Graphs of the reference shearlet for test [D4] centered at origin at scale 3 and shear parameter 0 in both spatial and frequency
domain with $N=512$.}
\label{fig:Shearlet}
\end{figure}

Table \ref{tab:6} presents the space-frequency-localization measures $M_{decay_1}$, $M_{decay_2}$, and $M_{supp}$ for analyzing the decay in
spatial and frequency domains as well as the measures $M_{smooth_1}$ and $M_{smooth_2}$ for smoothness in spatial and frequency domains.

\begin{table}[ht]
\label{tab:6}
\caption{Results for [D4]}
\begin{tabular}{p{2.2cm}p{2.2cm}p{2.2cm}p{2.2cm}p{2.2cm}}
\toprule
$M_{decay_1}$ & $M_{supp}$ & $M_{decay_2}$& $M_{smooth_1}$ & $M_{smooth_2}$ \\
\midrule
-1.920 & 5.5E-5 & -3.257 & 1.319 & 0.734\\
\bottomrule
\end{tabular}
\end{table}

The measurements indicate that the shearlet illustrated in Figure~\ref{fig:Shearlet} decays slower in spatial domain ($M_{decay_1} \approx$ -1.920)
than in the frequency domain ($M_{decay_2}\approx$ -3.257) in terms of the average decay rates. In terms of average local H\"older smoothness,
the shearlet in spatial domain is smoother than in the frequency domain proven by the fact that $M_{smooth_1} \approx 1.319>0.734 \approx M_{smooth_2}$.
The very small value $M_{supp}\approx$ 5.5E-5 -- although it is not zero due to round off errors -- indicates that our shearlet is indeed compactly
supported.

\subsection{Shear Invariance Test [D5]}

Test [D5] requires a reference image $I$ and a sheared version $I_s = I(S_s\cdot)$, which is illustrated in Figure \ref{fig:edges}
for $s=0.5$ alongside with their pseudo-polar Fourier transforms $\hat{I}$ and $\hat{I}_s$, respectively.

\begin{figure}[ht]
\begin{center}
\includegraphics[height=1.0in]{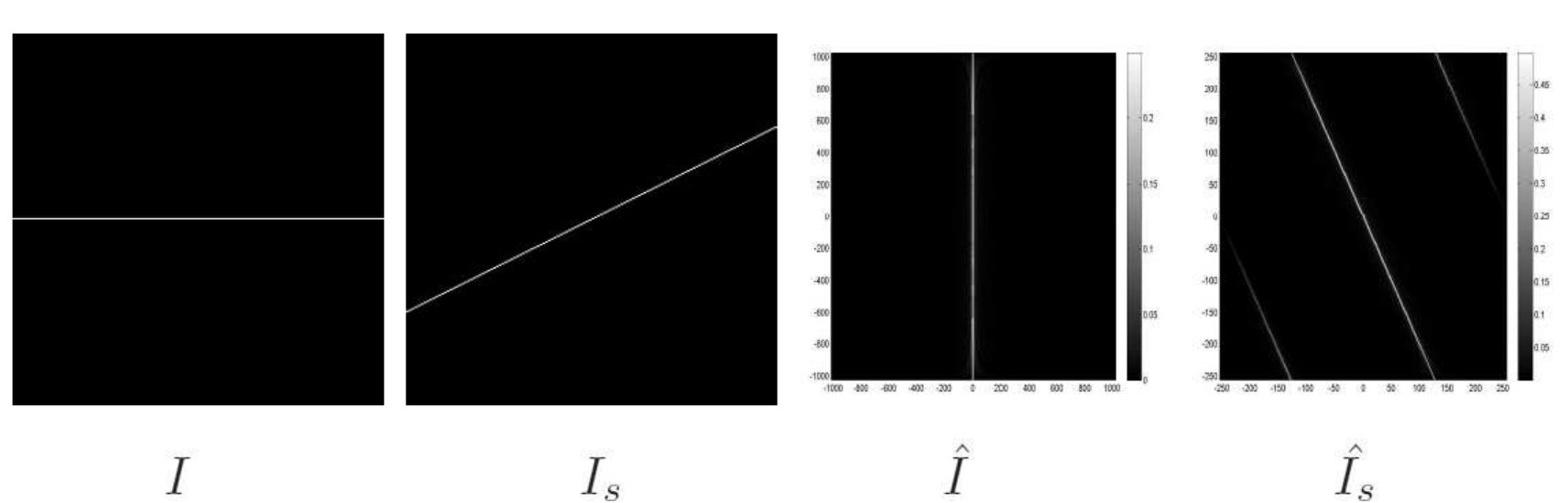}
%\includegraphics[width=1.0in,height=1.0in]{EdgeN256s0}
%\includegraphics[width=1.0in,height=1.0in]{geoExact6t}
%\includegraphics[width=1.0in,height=1.0in]{ppViewEdgeN256s0}
%\includegraphics[width=1.0in,height=1.0in]{geoExact6f}
%\put(-270,-17){$I$}
%\put(-190,-17){$I_s$}
%\put(-120,-17){$\hat{I}$}
%\put(-45,-17){$\hat{I_s}$}
\end{center}
\caption{Graphs of the reference image $I$ and its sheared version $I_s$ for $s = 0.5$ in spatial domain and $\hat{I}
$ and $\hat{I_s}$ in pseudo-polar domain.}
\label{fig:edges}
\end{figure}

The shear invariance measurements are presented in Table \ref{tab:7}.
\begin{table}[ht]
\label{tab:7}
\caption{Results for [D5]}
\begin{tabular}{p{2.2cm}p{2.2cm}p{2.2cm}p{2.2cm}p{2.2cm}}
\toprule
Scale & 1 & 2 & 3 &4 \\ \midrule
$M_{shear,j}$ & 1.6E-5 & 1.8E-4 & 0.002  & 0.003\\
\bottomrule
\end{tabular}
\end{table}
This table shows that the FDST is indeed almost shear invariant. A closer inspection shows that
$M_{shear,1}$ and $M_{shear,2}$ are relatively small compared to the measurements with respect to finer scales $M_{shear,3}$ and
$M_{shear,4}$. The reason for this is the aliasing effect which shifts some energy to the high frequency part near the boundary
away from the edge in the frequency domain, see also the graph of $\hat{I}_s$ in Figure \ref{fig:edges}.
%Nevertheless, the measurements in Table VII shows that most of the energy are located along the sheared edge.

\subsection{Speed Test [D6]}

The measurements for speed performance, i.e., for the complexity $M_{speed_1}$, the constant $M_{speed_2}$, and the comparison with
2D-FFT $M_{speed_3}$, are presented in Table \ref{tab:8}.

\begin{table}[ht]
\label{tab:8}
\caption{Results for [D6]-[D7]}
\begin{tabular}{p{2.2cm}p{2.2cm}p{2.2cm}p{2.2cm}p{2.2cm}}
\toprule
$M_{speed_1}$ & $M_{speed_2}$ & $M_{speed_3}$ & $M_{geo_1}$ & $M_{geo_2}$\\
\midrule
1.156 & 9.3E-6 & 280.560 &-1.358 & -2.032\\
\bottomrule
\end{tabular}
\end{table}

%For illustration purposes, we also show the speed curve of the FDST for $R=8$ in Figure \ref{fig:speed}.
%
%%
%\begin{figure}[ht]
%\begin{center}
%\includegraphics[height=1.3in]{speed}
%\end{center}
%\caption{The speed of the FDST for $R=8$.}
%\label{fig:speed}
%\end{figure}
%

The complexity constant $M_{speed_1}\approx 1.156$
confirms that the order of complexity of our FDST algorithm is indeed close to linear (compare $O(N^2\log N)$ in theory). The constant of comparison with
2D-FFT is $M_{speed_3}\approx 280$, which seems significantly slower than the usually 2D-FFT. However, we should notice that the 2D-FFT is applied directly
to the image of size $N\times N$ while the FDST employs fractional Fourier transforms and subband windowing on a oversampling pseudo-polar grid of size
$2\times (RN+1)\times (N+1)$ with oversampling rate $R=8$. Taking into account that the fractional Fourier transform is about 5 times slower than the FFT
and that the redundancy (about $4R$) comes from subband windowing with oversampling rate $R=8$, we conclude that the constant $M_{speed_3}$ is reasonable
and our implementation is indeed comparable with the 2D-FFT.

\subsection{Geometric Exactness Test [D7]}
\label{subsec:geometry}

Test [D7] requires reference images with edges at various slopes which are plotted in Figure \ref{fig:GeoExat}.
\begin{figure}[ht]
\begin{center}
\includegraphics[height=1.1in]{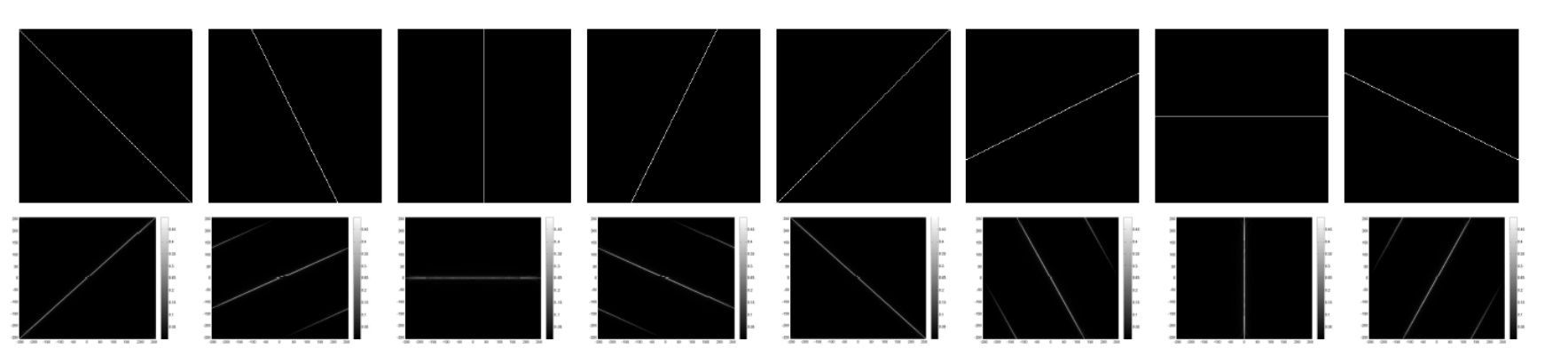}
%\includegraphics[height=0.55in]{geoExact1t}
%\includegraphics[height=0.55in]{geoExact2t}
%\includegraphics[height=0.55in]{geoExact3t}
%\includegraphics[height=0.55in]{geoExact4t}
%\includegraphics[height=0.55in]{geoExact5t}
%\includegraphics[height=0.55in]{geoExact6t}
%\includegraphics[height=0.55in]{geoExact7t}
%\includegraphics[height=0.55in]{geoExact8t}
%\end{center}
%\begin{center}
%\includegraphics[height=0.44in]{geoExact1f}
%\includegraphics[height=0.44in]{geoExact2f}
%\includegraphics[height=0.44in]{geoExact3f}
%\includegraphics[height=0.44in]{geoExact4f}
%\includegraphics[height=0.44in]{geoExact5f}
%\includegraphics[height=0.44in]{geoExact6f}
%\includegraphics[height=0.44in]{geoExact7f}
%\includegraphics[height=0.44in]{geoExact8f}
\end{center}
\caption{The graphs of edges in both spatial and frequency domains. Top 8: Edges in spatial domain. Bottom 8: Edges in frequency domain.}
\label{fig:GeoExat}
\end{figure}

The associated geometric exactness measurements $M_{geo_1}$ and $M_{geo_2}$ are presented in Table \ref{tab:8}.
%\begin{table}[ht]
%\label{tab:9}
%\caption{Results for [D7]}
%\begin{tabular}{p{5.5cm}p{6.5cm}}
%\toprule
%$M_{geo_1}$ & $M_{geo_2}$ \\
%\midrule
%-1.358&-2.032 \\
%\bottomrule
%\end{tabular}
%\end{table}
These two measurements  show the differences between the
decay rates of the shearlet coefficients aligned with edges (significant shearlet coefficients) and the shearlet coefficients
not aligned with the edges (insignificant shearlet coefficients).
As theoretically proven in Subsection \ref{subsec:line}, and hence expected for the implementation,
the insignificant shearlet coefficients ($M_{geo_2}\approx$ -2.032 ) decay much faster than the significant shearlet coefficients
with increasing scale $j$.

\subsection{Robustness Test [D8]}

Table \ref{tab:10} presents the measurements for the robustness test [D8].

\begin{table}[ht]
\label{tab:10}
\caption{Results for [D8]}
\begin{tabular}{p{1.7cm}p{1.7cm}p{1.7cm}p{1.7cm}p{1.7cm}p{1.9cm}}
\toprule
$M_{thres_{1,p_1}}$&1.5E-8&7.2E-8&2.5E-5 & 0.001  & 0.007 \\
\midrule
$M_{thres_{2,p_2}}$& 0.005 & 0.039 & 0.078 & 0.113  & 0.154 \\
\midrule
$M_{quant,q}$& 0.034 & 0.047 & 0.057 & 0.071 & 0.109\\
\bottomrule
\end{tabular}
\end{table}

These results show that even if $100(1-2^{-10})\approx 99.9\%$ of the shearlet coefficients are discarded, the original
image is still well approximated by the reconstructed image ($M_{thres_{1,p_1}}\approx$ 0.007).  Thus the number of
the significant coefficients is relatively small compared to the total number of shearlet coefficients. The second row
indicates that knowledge of the shearlet coefficients with absolute value greater than $m(1-1/2^{0.001})$ -- hence about
$0.1\%$ of coefficients -- is sufficient for precise reconstruction ($M_{thres_{1,p_2}}\approx$ 0.005).
The quantization test $M_{quant,q}$ attests the FDST high resilience against even quite coarse quantization.

%%***********************************************************************************

%%***********************************************************************************
\section{Conclusions}

We described a natural digitization of the continuous shearlet transform based on the pseudo-polar Fourier transform. For this,
we first introduced a digital shearlet tight frame, which is an exact digitization of the classical band-limited system defined in
the continuum domain. This shows that the
shearlet framework provides a unified treatment of the continuum and digital realm. We then described a fast digital
shearlet transform (FDST) whose main ingredients are a weighted pseudo-polar Fourier transform to achieve an isometric mapping
into the pseudo-polar domain and a windowing based on the digital shearlet tight frame. We discussed several
choices of weighting functions to achieve almost isometry so that inverse digital shearlet transform can be
obtained by using the adjoint transform. Various properties of the FDST are proven,
and details of its implementation are provided. We further define several measures that quantify the performance
of the FDST, thereby justifying the term `rational design'. These measures also provide a common ground to compare
the performance of different algorithms based on parabolic scaling.
The FDST as well as the performance measures are publicly available in the software
package \url{ShearLab}, see \url{www.ShearLab.org}.

%{\em from SPIE paper
%\\
%We introduced a digital shearlet transform as one example of a parabolic scaling algorithm. The implementation
%was based on a carefully defined digital shearlet theory, which is the exact digitization of the shearlet theory
%for continuous data. In this sense our digital shearlet transform was indeed rationally designed. The main
%ingredients of our algorithm are a weighted pseudo-polar Fourier transform to achieve an isometric mapping into the
%pseudo-polar domain, thereby using a particular oversampling strategy, as well as a careful selection of tiling
%windows. We also defined test measures for a general parabolic scaling algorithm, which allow a quantitative
%statement about various performance aspects of such algorithms. Our digital shearlet transform was tuned with
%respect to those measures. The values of the quality measures of our recommended version – after careful tuning
%– were presented.
%}

%***********************************************************************************
%***********************************************************************************
%***********************************************************************************

\appendix
\section{Algorithms}
\renewcommand{\thesection}{A}

In this appendix, we provide pseudo-codes in Matlab style for algorithms of FDST (Algorithm \ref{alg:forward}),
of the adjoint FDST (Algorithm \ref{alg:adjoint}), and the inverse FDST (Algorithm \ref{alg:inverse}).
For the required notation, we refer to Section \ref{sec:implementation}.

%\vspace*{-0.25cm}

%********************
%\subsection{Algorithm for FDST}

%\vspace*{-0.25cm}
%\setcounter{\thesubsection}{1}

\renewcommand{\thealgorithm}{\thesection.1}

\renewcommand{\algorithmiccomment}[1]{//#1}
{\allowdisplaybreaks
\begin{algorithm}[h]
\caption{Fast Digital Shearlet Transform (FDST)}
\label{alg:forward}
{\allowdisplaybreaks
\begin{algorithmic}[1]\vspace*{0.2cm}
\item[{\rm(a)}] {\bf Input:} Image $I=\{[I]_{u,v}: -N/2\le u,v\le N/2-1\}$ of size $N\times N$, the oversampling rate $R$,
and the precomputed weight matrix $w$  of size $2\times (RN+1)\times (N+1)$.\\[1ex]

\item[{\rm(b)}]{\bf Output:} Digital shearlet coefficients $C_I =\{c^{\iota_0}:\iota_0\}\cup \{c_{j,s}^{\iota} : j, s,  \iota\}$.
Each $c_{j,s}^\iota$ is a matrix of size $\cL_{j}^1\times \cL_{j,s}^2$, see Definition~\ref{defi:digitalshearlets}.\\[1ex]

\item[{\rm(c)}] {\bf PPFT:} The pseudo-polar Fourier transform (see also Algorithm~\ref{alg:ppft}).
\STATE $J\leftarrow zeros(2,RN+1,N+1)$. $m\leftarrow RN+1$. \COMMENT{\texttt{$J$ is the grid $J= J_{\Omega_R^1}\cup J_{\Omega_R^2}$}.}

\FOR{$sector = 1$ to $2$}

\STATE {\bf if} {$sector = 2$} {\bf then}
$I = I^T$ %\COMMENT{\texttt{The transpose of $I$.}}
{\bf end if}

\FOR{$v=-N/2$ to $N/2-1$}
\STATE $q\leftarrow [I]_{:,v}$.
, $q\leftarrow E_{RN+1,N}q$
, $[J]_{sector, :, v}\leftarrow F_1q$.

 \COMMENT{\texttt{symmetric zero-padding and FFT along direction $u$.}}
\ENDFOR

\FOR{$k=-RN/2$ to $RN/2$}

\STATE  $q\leftarrow[J]_{sector,k,:}$, $\alpha \leftarrow -\frac{k}{mN/2}$, $q\leftarrow F_{N+1}^\alpha q$, $[J]_{sector,k,:}\leftarrow q$.

\COMMENT{\texttt{frFT along direction $v$.}}
\ENDFOR

\ENDFOR\\[1ex]

\item[{\rm(d)}]{\bf Weighting:} Weighting on the pseudo-polar grid with precomputed weight $w$.
\STATE $J \leftarrow J{\rm.*}\sqrt{w}$. \COMMENT{\texttt{.* is the point-wise multiplication}.}\\[1ex]

\item[{\rm(e)}]{\bf  Subband Windowing:} Subband windowing  with $\cD\cS\cH$.

\STATE $L\leftarrow -\lceil \log_4(R/4)\rceil$, $H\leftarrow \lceil \log_4 N\rceil$,
$sMax\leftarrow 2\cdot 2^H+1$.

\STATE $C_I\leftarrow cell\{4,H-L+2,sMax\}$.

\FOR{$\iota = 11, 12, 21, 22$}

\FOR{$j=L$ to $H$}

\STATE {\bf if} {$j<0$} {\bf then}
$tile\leftarrow 0$
{\bf else}
 $tile \leftarrow 2^j$
{\bf end if}

\FOR{$s=-tile$ to $tile$}

\STATE Let $c_{ j, s}^{\iota}$ be a submatrix of $J$ of size $\cL_j^1\times \cL_{j,s}^2$  with respect to the support
of the digital shearlet $\sigma_{j,s,0}^\iota$.

\STATE $c_{j,s}^\iota\leftarrow c_{j,s}^{\iota}{\rm.*}\overline{\sigma^\iota_{j,s,0}}$,  $c_{j,s}^\iota\leftarrow F_2^{-1} c_{j,s}^\iota$.
\COMMENT{\texttt{Windowing with 2D iFFT.}}

\STATE $C_I\{\iota,j,s\}\leftarrow c_{j,s}^\iota$.
\ENDFOR

\ENDFOR

\ENDFOR

\STATE Let $\varphi_0^1,\varphi_0^2$ be the shearlets associated with the low-frequency part. Let $c^i, i=1,2$ denote the submatrix
of $J$ with respect to the support of $\varphi_0^i$, $i=1,2$.
\STATE $c^i \leftarrow c^i{\rm .*}\overline{\varphi^i_0}$, $i=1,2$.
\STATE $C_I\{i,L-1,0\} \leftarrow c^i$, $i=1,2$.
\end{algorithmic}
}
\end{algorithm}
}
%************************************************************************************

%\subsection{Algorithm for Adjoint FDST}

\renewcommand{\thealgorithm}{\thesection.2}

{\allowdisplaybreaks
\begin{algorithm}[h]
\caption{Fast Adjoint Digital Shearlet Transform (Adjoint FDST)}
\label{alg:adjoint}
{\allowdisplaybreaks
\begin{algorithmic}[1]\vspace*{0.2cm}
\item[{\rm(a)}]{\bf Input:} Digital shearlet coefficients $C_I = \{c^{\iota_0}:\iota\}\cup\{c_{j,s}^{\iota} : j, s,  \iota\}$,
where each $c_{j,s}^\iota$ is a matrix of size $\cL_{j}^1\times \cL_{j,s}^2$ (see Definition~\ref{defi:digitalshearlets}), the oversampling rate
$R$, and the precomputed weight matrix $w$ of size $2\times (RN+1)\times (N+1)$.\\[1ex]

\item[{\rm(b)}] {\bf Output:} An image $I=\{[I]_{u,v}: -N/2\le u,v\le N/2-1\}$ of size $N\times N$.\\[1ex]

\item[{\rm(c)}]{\bf  Adjoint Subband Windowing:}  Subband windowing by $\cD\cS\cH$.

\STATE $L\leftarrow -\lceil \log_4(R/4)\rceil$, $H\leftarrow \lceil \log_4 N\rceil$,
$J\leftarrow zeros(2,RN+1,N+1)$.
%$sMax\leftarrow 2\cdot 2^H+1$.

%\STATE Let $\tilde\varphi^1,\tilde\varphi^2$ be the adjoint digital shearlets associated with the low-frequency part.
\STATE $c^i\leftarrow C_I\{i,L-1,0\}$, $i=1,2$.

\STATE $c^i \leftarrow c^i{\rm .*}\varphi_0^i$, $i=1,2$.

\STATE Assign $c^i$ to $J$ with respect to the support of $\varphi_0^i$, $i=1,2$.

%\STATE $\cS\cH\cX\leftarrow cell\{4,H-L+2,sMax\}$.

\FOR{$\iota = 11, 12, 21, 22$}

\FOR{$j=L$ to $H$}

\STATE {\bf if} {$j<0$} {\bf then}
$tile\leftarrow 0$
{\bf else}
 $tile \leftarrow 2^j$
{\bf end if}

\FOR{$s=-tile$ to $tile$}

\STATE $c_{j,s}^\iota\leftarrow C_I\{\iota,j,s\}$.

\STATE $c_{j,s}^\iota\leftarrow F_2 c_{j,s}^\iota$.\COMMENT{\texttt{the 2D-FFT.}}

\STATE $c_{j,s}^\iota\leftarrow c_{j,s}^{\iota}{\rm.*}\sigma^\iota_{j,s,0}$.

\STATE Assign $c_{j,s}$ to $J$ with respect to the support of $\sigma^\iota_{j,s,0}$.

\ENDFOR

\ENDFOR

\ENDFOR\\[1ex]

\item[{\rm(d)}]{\bf Weighting:} Weighting with the precomputed weight $w$.
\STATE $J \leftarrow J{\rm.*}\sqrt{w}$. \COMMENT{\texttt{.* is the point-wise multiplication}.}\\[1ex]

\item[{\rm(e)}] {\bf Adjoint PPFT:} The adjoint pseudo-polar Fourier transform.
\STATE $I\leftarrow zeros(N,N)$, $I_0 \leftarrow zeros(N,N)$,$m\leftarrow RN+1$,
 $J_1\leftarrow zeros(m,N)$.

\FOR{$sector = 1$ to $2$}

\FOR{$k=-RN/2$ to $RN/2$}
\STATE $q \leftarrow [J]_{sector,k,:}$.
 $\alpha \leftarrow -\frac{k}{mN/2}$.
 $q\leftarrow F_{N+1}^{-\alpha} q$.

\STATE $[J_1]_{k,:}\leftarrow E_{N+1,N}^\star q$.
\ENDFOR

\FOR{$v=-N/2$ to $N/2-1$}
\STATE $q\leftarrow [J_1]_{:,v}$.
 $q\leftarrow F_1q$, $q\leftarrow E_{m,N}^\star q$.

\STATE $[I_0]_{:,v} \leftarrow q$.
\ENDFOR

\STATE $I = I+I_0$.
\ENDFOR

\end{algorithmic}
}
\end{algorithm}
}

%*********************************************************************************
%\subsection{Algorithm for Inverse FDST Using CG}

\renewcommand{\thealgorithm}{\thesection.3}

{\allowdisplaybreaks
\begin{algorithm}[h]
\caption{Inverse Fast Digital Shearlet Transform (Inverse FDST)}
\label{alg:inverse}
{\allowdisplaybreaks
\begin{algorithmic}[1]\vspace*{0.2cm}
\item[{\rm(a)}]{\bf Input:} Digital shearlet coefficients $C_I= \{c^{\iota_0}:\iota_0\}\cup\{c_{j,s}^{\iota} : j, s,  \iota\}$,
the oversampling rate $R$, the precomputed weight matrix $w$ of size $2\times (RN+1)\times (N+1)$, an initial guess $I_0$, a precision
parameter $\varepsilon$, and a maximal iteration number $itMax$.\\[1ex]

\item[{\rm(b)}] {\bf Output:} An image $I=\{[I]_{u,v}: -N/2\le u,v\le N/2-1\}$ of size $N\times N$.\\[1ex]

\item[{\rm (c)}]{\bf Initialization:} Perform the adjoint shearlet windowing on $C_I$ by using Algorithm~2 to obtain an image $b$, say.\\[1ex]

\item[{\rm(d)}]{\bf  CG Iteration:} Let $A = P^\star w P$ generated by Algorithm \ref{alg:ppft}, Algorithm \ref{alg:forward}(e), and Algorithm
\ref{alg:adjoint}(e). We apply the CG method to solve the linear system $A I = b$.
\STATE $r_0 \leftarrow b-A I_0$, $p_0 \leftarrow r_0$, $k \leftarrow 0$.
\WHILE{$\|r_k\|_2>\varepsilon$ and $k< itMax$}
\STATE $\alpha_k \leftarrow \frac{r_k^Tr_k}{p_k^T A p_k}$.
\STATE $I_{k+1}\leftarrow I_k + \alpha_k p_k$.
\STATE $r_{k+1}\leftarrow r_k-\alpha_k A p_k$.
\STATE $\beta_{k}\leftarrow \frac{r_{k+1}^Tr_{k+1}}{r_k^Tr_k}$.
\STATE $p_{k+1}\leftarrow r_{k+1}+\beta_k  p_k$.
\STATE $k\leftarrow k+1$.
\ENDWHILE
\STATE $I\leftarrow I_k$.
\end{algorithmic}
}
\end{algorithm}
}

%************************************************************************


\begin{thebibliography}{99}

\bibitem{ACDIS08}
A. Averbuch, R. R. Coifman, D. L. Donoho, M. Israeli, and Y. Shkolnisky,
{\em A framework for discrete integral transformations I -- the pseudo-polar Fourier transform},
SIAM J. Sci. Comput. {\bf 30} (2008), 764--784.

\bibitem{Bailey:frFT}
D. H. Bailey and P. N. Swarztrauber,
{\em The fractional Fourier transform and applications},
SIAM Rev. {\bf 33} (1991), 389--404.

%\bibitem{BG2004}
%R. F. Bass and K. Gr\"ochenig,
%{\em Random sampling of multivariate trigonometric polynomials}, SIAM J. Math. Anal. {\bf36} (2004), 773-795.

\bibitem{CDDY06}
E. J. Cand\`{e}s, L. Demanet, D. L. Donoho and L. Ying,
{\em  Fast discrete curvelet transforms,}
Multiscale Model. Simul. {\bf 5} (2006), 861--899.

\bibitem{CD99}
E. J. Cand\`{e}s and D. L. Donoho,
{\em Ridgelets: a key to higher-dimensional intermittency?,}
Phil. Trans. R. Soc. Lond. A. {\bf 357} (1999), 2495--2509.

\bibitem{CD04}
E. J.~Cand\`es and D. L.~Donoho,
{\em New tight frames of curvelets and optimal representations of objects with $C^2$ singularities},
Comm. Pure Appl. Math. {\bf 56} (2004), 219--266.

\bibitem{CD05a}
E. J.~Cand\`es and D. L.~Donoho,
{\em Continuous curvelet transform: I. Resolution of the wavefront set},
Appl. Comput. Harmon. Anal. {\bf 19} (2005), 162--197.

\bibitem{CD05b}
E. J.~Cand\`es and D. L.~Donoho,
{\em Continuous curvelet transform: II. Discretization of frames},
Appl. Comput. Harmon. Anal. {\bf 19} (2005), 198--222.

\bibitem{Chr03}
O.~Christensen,
\emph{An Introduction to Frames and Riesz Bases},
Birkh\"auser, Boston, 2003.

\bibitem{DV05}
M. N. Do and M. Vetterli,
{\em The contourlet transform: An efficient directional multiresolution image representation},
IEEE Trans. Image Process. {\bf 14} (2005), 2091--2106.

\bibitem{DK10}
D. L. Donoho and G. Kutyniok,
{\em Microlocal analysis of the geometric separation problem},
preprint.

%\bibitem{DKS09}
%D. L. Donoho, G. Kutyniok, M. Shahram, and X. Zhuang,
%{\tt www.ShearLab.org}.

\bibitem{DMSSU08}
D. L. Donoho, A. Maleki, M. Shahram, V. Stodden, and I. Ur-Rahman,
{\em Fifteen years of reproducible research in computational harmonic analysis,}
Comput. Sci. Eng.  {\bf 11} (2009), 8--18.

%\bibitem{DR93}
%A. Dutt and V. Rokhlin,
%{\em Fast Fourier transforms for nonequispaced data},
%SIAM J. Sci. Comput.  {\bf 14} (1993), 1368--1393.

\bibitem{ELL08}
G. Easley, D. Labate, and W.-Q Lim,
{\em Sparse directional image representations using the discrete shearlet transform},
 Appl. Comput. Harmon. Anal. {\bf 25} (2008), 25--46.

%\bibitem{ESQD05}
%M. Elad, J.-L.  Starck, P. Querre, and D. L. Donoho,
%{\em Simultaneous cartoon and texture image inpainting using morphological component analysis (MCA)},
%Appl. Comput. Harmon. Anal.  {\bf 19}  (2005),  340--358.

\bibitem{FGS95}
H. Feichtinger, K. Gr\"ochenig, and T. Strohmer,
{\em Efficient numerical methods in non-uniform sampling theory},
Numer. Math. {\bf 69} (1995), 423--440.

\bibitem{GKL06}
K. Guo, G. Kutyniok, and D. Labate,
{\em Sparse multidimensional representations using an\-iso\-tro\-pic dilation and shear operators},
Wavelets and Splines (Athens, GA, 2005), Nashboro Press, Nashville, TN (2006), 189--201.

\bibitem{GL07a}
K. Guo and D. Labate,
{\em Optimally sparse multidimensional representation using shearlets,}
SIAM J. Math. Anal. {\bf 39} (2007),  298--318.

\bibitem{G92}
K. Gr\"ochenig,
{\em Reconstruction algorithms in irregular sampling},
Math. Comp. {\bf 59}(199) (1992), 181--194.


\bibitem{G93}
K. Gr\"ochenig,
{\em A discrete theory of irregular sampling},
Linear Algebra Appl. {\bf 193} (1993), 129--150.




\bibitem{GS2000}
K. Gr\"ochenig and T. Strohmer,
{\em Numerical and theoretical aspects of non-uniform sampling of band-limited images},
in  F. Marvasti, editor, Nonuniform Sampling: Theory and Applications, chapter 6, page 283--324. Kluwer, 2001.

\bibitem{HKZ10}
B. Han, G. Kutyniok, and Z. Shen,
{\em A unitary extension principle for Shearlet Systems},
preprint.

\bibitem{HH08}
F. J. Herrmann and G. Hennenfent,
{\em Non-parametric seismic data recovery with curvelet frames},
Geophys. J. Int. {\bf 173} (2008), 233–-248.

\bibitem{HR63} E. Hewitt and K.A. Ross, {\em Abstract Harmonic Analysis I, II},
Springer-Verlag, Berlin/ Heidelberg/New York, 1963.

%
%\bibitem{HWHM08}
%F. J. Herrmann, D. Wang, G. Hennenfent, and P. Moghaddam,
%{\em Curvelet-based seismic data processing: a multiscale and nonlinear approach},
%Geophysics {\bf 73} (2008), A1–-A5.

\bibitem{KKL10}
P. Kittipoom, G. Kutyniok, and W.-Q Lim,
{\em  Construction of compactly supported shearlet frames},
preprint.

\bibitem{DK08b}
G. Kutyniok,
{\em Sparsity equivalence of anisotropic decompositions},
preprint.

\bibitem{KL09}
G. Kutyniok and D. Labate,
{\em Resolution of the wavefront Set using continuous shearlets},
Trans. Amer. Math. Soc. {\bf 361} (2009), 2719--2754.

\bibitem{KL10}
G. Kutyniok and W.-Q Lim,
{\em Compactly supported shearlets are optimally sparse}, preprint.

\bibitem{KLL10}
G. Kutyniok, J. Lemvig, and W.-Q Lim,
{\em Compactly supported shearlets},
Approximation Theory XIII (San Antonio, TX, 2010), Springer, to appear.

%\bibitem{KL08}
%G. Kutyniok and D. Labate,
%{\tt www.Shearlet.org}.

\bibitem{KS09}
G. Kutyniok and T. Sauer,
{\em Adaptive directional subdivision schemes and shearlet multiresolution analysis},
SIAM J. Math. Anal. {\bf 41} (2009), 1436--1471.

\bibitem{Lim2010}
W.-Q Lim,
{\em The discrete shearlet transform: A new directional transform and compactly supported shearlet frames},
IEEE Trans. Imag. Proc. {\bf 19} (2010), 1166--1180.

\bibitem{SCD03}
J.-L. Starck, E. Cand\`{e}s, and D. L. Donoho,
{\em Astronomical image representation by the curvelet tansform},
Astronomy and Astrophysics {\bf 398} (2003), 785--800.



\end{thebibliography}
\end{document}